    \def\examplescale{1}
\newtheorem{theorem}{Theorem}
\newtheorem{lemma}[theorem]{Lemma}
\newtheorem{corollary}[theorem]{Corollary}
\newtheorem{question}{Question}
\newtheorem{prop}[theorem]{Proposition}
\theoremstyle{definition}
\newtheorem{remark}[theorem]{Remark}
\newtheorem*{remark*}{Remark}
\def\NN{{\mathbb N}}
\renewcommand{\geq}{\geqslant}
\renewcommand{\leq}{\leqslant}
\newcommand\cA{\mathcal{A}}
\newcommand\cC{\mathcal{C}}
\newcommand\cG{\mathcal{G}}
\newcommand\cH{\mathcal{H}}
\newcommand\eps{\varepsilon}
\newcommand{\sm}{\!\setminus\!}
\def\int{\operatorname{int}}
\def\Ex{\operatorname{Ex}}
\newenvironment{proofOfTheorem}[1]{\begin{proof}[\textit{Proof of Theorem \ref{#1}}]}{\end{proof}}
\begin{document}

\title{On the critical densities of minor-closed classes}
\author[1]{Colin McDiarmid}
\author[2]{Micha{\l} Przykucki}
\affil[1]{Department of Statistics, Oxford University\thanks{Email: cmcd@stats.ox.ac.uk}}
\affil[2]{School of Mathematics, University of Birmingham\thanks{Email: m.j.przykucki@bham.ac.uk. During a large part of this project, the second author was affiliated with the Mathematical Institute of the University of Oxford.}}
\renewcommand\Authands{ and }
\date{\today}

\maketitle

\makeatletter{\renewcommand*{\@makefnmark}{}
\footnotetext{\textcopyright~2018. This manuscript version is made available under the CC-BY-NC-ND 4.0 License.}\makeatother}

\begin{abstract}
Given a minor-closed class $\cA$ of graphs, let $\beta_{\cA}$ denote the supremum over all graphs in $\cA$ of the ratio of edges to vertices. We investigate the set $B$ of all such values $\beta_{\cA}$, taking further the project begun by Eppstein. Amongst other results, we determine the small values in $B$ (those up to 2); we show that $B$ is `asymptotically dense'; and we answer some questions posed by Eppstein.
\end{abstract}

\section{Introduction}
\label{sec:intro}

For a given graph $G$, let $v(G), e(G)$ denote its number of vertices and edges, respectively. The \emph{density} $\rho(G)$ of a graph $G$ is $e(G)/v(G)$, the number of edges per vertex. Thus the average degree of $G$ is $2 \rho(G)$. Given a class $\cA$ of graphs (closed under isomorphism), we let $\cA_n$ denote the set of graphs in $\cA$ on $n$ vertices; let
\[
   e^*_{\cA}(n) = \max_{G \in \cA_n} e(G)
\]
(where $e^*_{\cA}(n)=0$ if $\cA_n$ is empty); and let 
\[
 \beta_{\cA} = \sup_{G \in \cA} \rho(G) = \sup_{n \geq 1}  e^*_{\cA}(n)/n \;\; \mbox{ and } \;\; \lambda_{\cA} = \limsup_{n \to \infty} e^*_{\cA}(n)/n.
\]
A class of graphs is \emph{proper} if it is non-empty and does not contain all graphs. A graph $G$ contains a graph $H$ as a \emph{minor} if we can obtain a graph isomorphic to $H$ from a subgraph of $G$ by using edge contractions (discarding any loops and multiple edges, we are interested in simple graphs). A class $\cA$ of graphs is \emph{minor-closed} if whenever $G \in \cA$ and $H$ is a minor of $G$ then $H$ is in $\cA$. 

Let $\cA$ be a proper minor-closed class of graphs. Then $\beta_{\cA}$ is finite, as shown by Mader~\cite{minorsLinear}, and is called the \emph{critical density} for $\cA$. By Lemma 17 of~\cite{densitiesMinorClosed}, we always have $e^*_{\cA}(n)/n \to \lambda_{\cA}$ as $n \to \infty$: see also Norin~\cite{norin2015}, where ${\rm limd}(\cA)$ is the same as $2 \lambda_{\cA}$. Thus $\lambda_{\cA}$ is called the \emph{limiting density} of~$\cA$.  By definition $\lambda_{\cA} \leq \beta_{\cA}$. Suppose for example that $\cA$ is the minor-closed family of graphs $G$ such that at most one %(connected)
component has a cycle, and any such component has at most 5 vertices: then $\lambda_{\cA} = 1$ (because for $n \geq 5$ the densest graphs in $\cA_n$ consist of $K_5$ and a tree on $n-5$ vertices), and $\beta_{\cA}=2$ (because $K_5$ is the densest graph in $\cA$). If $\cA$ is the class of series parallel graphs (those with no minor $K_4$) then for each $n \geq 2$, each edge-maximal graph in $\cA$ has $2n-3$ edges (see~\cite{purityArxiv} for more on this), so  $\lambda_{\cA}=\beta_{\cA}=2$.

We are interested in the critical and limiting densities of proper minor-closed classes of graphs. The main object of study in this paper is the set of critical densities
\[
 B = \{ \beta_\cA : \cA \mbox{ is a proper minor-closed class of graphs} \}.
\]
We shall see shortly that for the corresponding set
\[
 L = \{ \lambda_\cA : \cA \mbox{ is a proper minor-closed class of graphs} \}
\]
of limiting densities we have $L=B$.

Given a proper minor-closed class $\cA$ of graphs, a graph $G$ is an \emph{excluded minor} for $\cA$ if $G$ is not in $\cA$ but each proper minor of $G$ is in $\cA$.  If $\cH$ is the set of excluded minors for $\cA$, it is easy to see that $\cA$ is the class of all graphs with no minor in $\cH$: we write $\cA=\Ex(\cH)$.   By the Robertson-Seymour theorem~\cite{robertsonSeymour}, the set $\cH$ %of excluded minors for $\cA$ 
is finite.  

A class of graphs is called \emph{decomposable} when a graph $G$ is in the class if and only if each component of $G$ is. It is easy to see that a minor-closed class of graphs is decomposable if and only if each excluded minor is connected. By Lemma 5 in \cite{purityArxiv}, if $\cA$ is a {decomposable} minor-closed class of graphs, then $e^*_\cA(n) /n \to \beta_{\cA}$ as $n \to \infty$, and so $\lambda_{\cA} = \beta_{\cA}$. Let
\[
 B_1 = \{ \beta_\cA : \cA \mbox{ is a proper decomposable minor-closed class of graphs} \}
\]
be the set of critical densities of decomposable minor-closed classes (so $B_1 \subseteq B$ trivially).
\smallskip

We call a graph $G$ \emph{minor-balanced} if each minor of $G$ has density at most that of $G$, and \emph{strictly minor-balanced} (or \emph{density-minimal}~\cite{densitiesMinorClosed}) if each proper minor has density strictly less than that of~$G$. If $G$ is minor-balanced with density $\rho(G)=\beta >0$, and we let $\cA$ be the decomposable class of graphs such that each component is a minor of $G$, then clearly $\beta_{\cA} = \lambda_{\cA} = \beta$. In this case, we say that the density $\beta$ is \emph{achievable}, and that $\beta$ is the \emph{maximum density} for $\cA$. For example, $K_5$ is (strictly) minor-balanced with density 2, so 2 is an achievable density. Let
\[
 A = \{ e(G)/v(G) : G \mbox{ is a minor-balanced graph} \}
\]
be the set of densities of minor-balanced graphs.

The first theorem presented here is a general statement concerning the structure of the set $B$ and describing the relationships between the sets $A, B, B_1$ and $L$.  It is largely taken from Eppstein~\cite{densitiesMinorClosed} and contains his Theorems 19 and 20. Given a set $S \subseteq {\mathbb R}$, we let $\bar{S}$ denote its closure and let $S'$ denote the set of accumulation (or limit) points.
\begin{theorem}
 \label{thm:structureOfB}
(a) The set $B$ of critical densities is countable, closed and well-ordered by $< $ ; and\\
(b) $\bar{A}=B=B_1=L$ and $A'=B'$.
\end{theorem}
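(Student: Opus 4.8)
The plan is to close a ring of inclusions $A\subseteq B_1\subseteq B\subseteq\bar A$, $B_1\subseteq L\subseteq B$, and $\bar A\subseteq B_1$, using two recurring devices. The first is replacing a graph by a \emph{maximum-density minor}: such a minor is automatically minor-balanced, since any minor of it is a minor of the original graph. The second is building a decomposable minor-closed class from a prescribed family of minor-balanced graphs. The inclusions $A\subseteq B_1\subseteq B$ and $A\subseteq L$ are the introductory observations (if $H$ is minor-balanced with $\rho(H)>0$, the decomposable class of graphs whose components are minors of $H$ has critical density $\rho(H)$, and $0=\beta_{\Ex(K_2)}$); and since a decomposable proper minor-closed class has $\lambda_\cA=\beta_\cA$, also $B_1\subseteq L$. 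For $B\subseteq\bar A$: given a proper minor-closed $\cA$, pick $G_k\in\cA$ with $\rho(G_k)\to\beta_\cA$ and let $H_k$ be a maximum-density minor of $G_k$; then $H_k$ is minor-balanced, $H_k\in\cA$, and $\rho(G_k)\le\rho(H_k)\le\beta_\cA$, so $\rho(H_k)\to\beta_\cA$, giving $\beta_\cA\in\bar A$.

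The first key point is that $A$ is \emph{well-ordered} by~$<$. If not, there is an infinite strictly decreasing sequence $\rho(H_1)>\rho(H_2)>\cdots$ of densities of minor-balanced graphs; by the Robertson–Seymour theorem graphs are well-quasi-ordered under the minor relation, so some $i<j$ has $H_i$ a minor of $H_j$, whence $\rho(H_i)\le\rho(H_j)$ as $H_j$ is minor-balanced — contradicting $\rho(H_i)>\rho(H_j)$. A short real-analysis lemma then gives that the closure of a well-ordered subset of $\RR$ is well-ordered; in particular $\bar A$ is well-ordered, hence closed and countable. Next, $\bar A\subseteq B$: take $x\in\bar A$. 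If $x\in A$ we are done; otherwise, since $A$ is well-ordered, $x$ is a limit from below, $x=\sup_k\rho(H_k)$ with $H_k$ minor-balanced and $\rho(H_k)$ strictly increasing. Let $\cA_x$ be the decomposable class of graphs all of whose components are minors of some $H_k$. By the mediant inequality together with minor-balancedness, every graph in $\cA_x$ has density at most $\sup_k\rho(H_k)=x$, while each $H_k\in\cA_x$, so $\beta_{\cA_x}=x$; and $\cA_x$ is proper because $\{\rho(H_k)\}$ being bounded forces some complete graph to be absent from it. Hence $x\in B_1$. Combining all of the above yields $B=B_1=\bar A$; and $A'=B'$ since $(\bar S)'=S'$ for $S\subseteq\RR$.

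It remains to prove $L=B$. We already have $B=B_1\subseteq L$, so the content is $L\subseteq B$, i.e.\ placing $\lambda_\cA$ in $B=\bar A$ for every proper minor-closed $\cA$. This is immediate when $\beta_\cA=\lambda_\cA$, so assume $\beta_\cA>\lambda_\cA$. Using well-ordering of $A$ and the convergence $e^*_\cA(n)/n\to\lambda_\cA$ (Lemma~17 of~\cite{densitiesMinorClosed}), the family $\cF$ of minor-balanced graphs in $\cA$ of density strictly greater than $\lambda_\cA$ is finite: their densities are bounded below by some $\mu>\lambda_\cA$, and density at least $\mu$ forces at most $m_0-1$ vertices, where $e^*_\cA(m)/m<\mu$ for all $m\ge m_0$. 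Put $\cA':=\cA\cap\Ex(\cF)$, a proper minor-closed class. A maximum-density minor $H$ of a graph $G\in\cA'$ is minor-balanced and lies in $\cA$, so $\rho(G)>\lambda_\cA$ would give $\rho(H)\ge\rho(G)>\lambda_\cA$, putting $H\in\cF$ and contradicting $G\in\Ex(\cF)$; hence $\beta_{\cA'}\le\lambda_\cA$. If we can also show $\lambda_{\cA'}\ge\lambda_\cA$, then $\lambda_\cA\le\lambda_{\cA'}\le\beta_{\cA'}\le\lambda_\cA$, so $\beta_{\cA'}=\lambda_\cA\in B$, completing the proof.

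The inequality $\lambda_{\cA'}\ge\lambda_\cA$ — that excluding these finitely many exceptionally dense minors does not lower the limiting density — is the step I expect to be the main obstacle. The available structural fact is that a densest graph $G$ on $n$ vertices in $\cA$ carries only boundedly many pairwise disjoint copies of any $H\in\cF$: $k$ disjoint copies would give $kH\in\cA$, whence $\lambda_\cA\ge\rho(H)>\lambda_\cA$. So a maximal packing of models of graphs in $\cF$ inside $G$ uses only boundedly many models, and deleting their vertices lands one in $\cA'$; what must still be controlled is the number of edges destroyed. This reduces to showing that a near-extremal graph does not realise an $\cF$-minor through a long, sparse ``subdivision-like'' region (contracting such a region would yield a denser graph on fewer vertices). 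That is exactly the shape information about near-extremal graphs underlying the convergence $e^*_\cA(n)/n\to\lambda_\cA$, and I would obtain $\lambda_{\cA'}\ge\lambda_\cA$ by refining that argument.
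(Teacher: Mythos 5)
Your overall architecture closely tracks the paper's: you establish $B=B_1$ by adding components, use the Robertson--Seymour well-quasi-ordering to well-order the set of densities of minor-balanced graphs, show $\bar A\subseteq B_1$ by building a decomposable class whose components are minors of the approximating minor-balanced graphs, and get $A'=B'$ from $B=\bar A$. One genuine improvement over the paper: your direct argument for $B\subseteq\bar A$ via maximum-density minors (a maximum-density minor of any $G\in\cA$ is automatically minor-balanced and lies in $\cA$) is clean and closes the ring $B\subseteq\bar A\subseteq B_1\subseteq B$ without detouring through $L$; the paper instead routes $B=B_1\subseteq L\subseteq\bar A$, leaning on Eppstein's Theorem~20 for the step $L\subseteq\bar A$.

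The one place where your proof has a real gap — which you explicitly flag — is $L\subseteq B$. The paper dispatches $L=\bar A$ by citing Eppstein's Theorem~20; you instead set up $\cA'=\cA\cap\Ex(\cF)$ (excluding the finitely many exceptionally dense minor-balanced graphs in $\cA$), correctly establish $\beta_{\cA'}\le\lambda_\cA$, and then reduce everything to the claim $\lambda_{\cA'}\ge\lambda_\cA$. That claim is the crux. It is essentially the full content of $L\subseteq\bar A$: one has to show that near-extremal graphs in $\cA_n$ can be replaced by near-extremal graphs with no $\cF$-minor at negligible edge cost, and your maximal-packing heuristic does not do this — a maximal packing controls only disjoint models, not a vertex hitting set (Erd\H{o}s--P\'osa fails for non-planar minors), and even with a bounded hitting set you would still need to control the number of edges incident to it. So while your framing is reasonable, the argument you gesture at is not complete, and it cannot be fixed by a quick refinement. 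Either cite Eppstein's Theorem~20 for this step (as the paper does), or supply a full proof — the latter is a substantial lemma in its own right, not a small remaining detail.
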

\noindent
Indeed it seems that more may be known, and each critical (or limiting) density is rational.  This is given as Theorem 8.3 in Norin's survey~\cite{norin2015}, where a `glimpse' of its lengthy proof is given, based on unpublished work of Kapadia and Norin: see also the recent paper of Kapadia~\cite{kapadia2017} on minor-closed classes of matroids.
For further results and conjectures see~\cite{norin2015}.
\smallskip

We shall see shortly that the set $B$ is unbounded (indeed, by Theorem~\ref{thm:addableDensities}, if $\beta \in B$ then $1+\beta \in B$). It follows from the first part of Theorem~\ref{thm:structureOfB} that $B$ is nowhere dense. For, given $x \geq 0$, the set $\{\beta \in B: \beta>x\}$ has a least element $x^+$, and the non-empty open interval $(x,x^+)$ is disjoint from $B$. In contrast, $B$ is `asymptotically dense', in the following sense. For each $x \geq 0$, let the `gap above $x$' in $B$ be $\delta_B(x)=x^+ -x$. Then $B$ is asymptotically dense, in that  $\delta_B(x) =o(1)$ as $x \to \infty$; and indeed we have the following result.

\begin{theorem}
 \label{thm:asymptoticallyDense}
 The gap $\delta_B(x)$ above $x$ satisfies $\delta_B(x) = O(x^{-2})$ as $x \to \infty$.
\end{theorem}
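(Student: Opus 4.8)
The plan is to use the relation $A \subseteq \bar{A} = B$ from Theorem~\ref{thm:structureOfB}(b): to bound $\delta_B(x)$ it suffices, for each $x \geq 1$, to exhibit a single minor-balanced graph whose density lies in the interval $(x,\, x + O(x^{-2})]$. Its density then belongs to $A \subseteq B$, so $x^+$ is at most that density, which controls $x^+ - x$. The graphs I would use are $k$-trees, with $k$ chosen according to $x$.

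I would first check that $k$-trees give the densities needed. Recall that a $k$-tree on $n \geq k+1$ vertices — obtained from $K_{k+1}$ by repeatedly adding a new vertex joined to a $k$-clique — has exactly $kn - \binom{k+1}{2}$ edges and treewidth $k$, hence density $k - \binom{k+1}{2}/n$. I claim every $k$-tree $T$ on $n$ vertices is strictly minor-balanced. Let $M$ be a proper minor of $T$, on $n'$ vertices. If $n' = n$ then $M$ is a proper subgraph of $T$, so $\rho(M) < \rho(T)$. If $k+1 \leq n' < n$ then, since treewidth is minor-monotone, $M$ has treewidth at most $k$, so $e(M) \leq kn' - \binom{k+1}{2}$ (an $m$-vertex graph of treewidth at most $k$ has at most $km - \binom{k+1}{2}$ edges when $m \geq k+1$, as partial $k$-trees are spanning subgraphs of $k$-trees), whence $\rho(M) \leq k - \binom{k+1}{2}/n' < k - \binom{k+1}{2}/n = \rho(T)$. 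Finally if $n' \leq k$ then $\rho(M) \leq (n'-1)/2 \leq (k-1)/2 < k/2 \leq \rho(T)$, the last inequality holding because $n \geq k+1$. So $\rho(M) < \rho(T)$ in every case, and hence $\{\,k - \binom{k+1}{2}/n : k \geq 1,\ n \geq k+1\,\} \subseteq A \subseteq B$.

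It then remains to optimise the parameters. Given $x \geq 1$, let $k$ be the least integer exceeding $x$, so $\delta := k - x \in (0,1]$ and (using $k \geq 2$) $x \geq k-1 \geq k/2$. The function $g(n) := k - \binom{k+1}{2}/n$ is increasing, with $g(k+1) = k/2 \leq x$ and $g(n) \to k > x$, so there is a least integer $n^\ast \geq k+2$ with $g(n^\ast) > x$. From $g(n^\ast - 1) \leq x$ we get $\binom{k+1}{2}/(n^\ast-1) \geq \delta$, hence $n^\ast \geq \binom{k+1}{2}/\delta$, and since $n^\ast - 1 \geq n^\ast/2$ we obtain
\[ 0 < g(n^\ast) - x \leq g(n^\ast) - g(n^\ast-1) = \frac{\binom{k+1}{2}}{n^\ast(n^\ast-1)} \leq \frac{2\delta^2}{\binom{k+1}{2}} \leq \frac{4}{k^2} \leq \frac{4}{x^2}. \]
As $g(n^\ast) \in B$, this gives $\delta_B(x) \leq g(n^\ast) - x \leq 4x^{-2}$, which is the theorem.

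I do not expect a genuine obstacle: the whole argument is short, and the only point that needs care is the claim that $k$-trees are minor-balanced, which rests on the standard structural facts quoted above (minor-monotonicity of treewidth and the edge bound for bounded-treewidth graphs). If one prefers to avoid appealing to $A \subseteq B$, an alternative is to let $\cA$ be the decomposable minor-closed class whose components are the graphs of treewidth at most $k$ on at most $n$ vertices, and to verify directly — much as in the $K_5$-plus-a-tree example of the introduction — that $\beta_\cA = \lambda_\cA = k - \binom{k+1}{2}/n$; this route is equally routine.
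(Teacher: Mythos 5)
Your proposal is correct and takes a genuinely cleaner route than the paper. The paper constructs two tailor-made families $G_k(m)$ (clique, complete bipartite graph, and a matching or triangles) and $F_k(m)$ (cliques glued along a common set), computes their densities, shows the resulting densities tile the half-line with mesh $O(k^{-2})$, and then verifies minor-balancedness by a somewhat delicate case analysis on contractions. You instead use $k$-trees: a single well-known family whose densities $k-\binom{k+1}{2}/n$ cover $[k/2,k)$, and whose strict minor-balancedness follows in a few lines from two standard facts (treewidth is minor-monotone, and a partial $k$-tree on $m\geq k+1$ vertices has at most $km-\binom{k+1}{2}$ edges). The covering-range argument (that $x\geq k-1 \geq k/2$ when $k$ is the least integer exceeding $x$, so the interval $[k/2,k)$ always catches $x$) is tighter than the paper's need for two interlocking families, and the $O(x^{-2})$ bound drops out of the usual telescoping difference $g(n^*)-g(n^*-1)$. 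This is a shorter and more conceptual proof of the same statement; the paper's constructions do, however, also serve its later sections, whereas $k$-trees give you only this theorem.

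One small slip: you write ``From $g(n^\ast - 1) \leq x$ we get $\binom{k+1}{2}/(n^\ast-1) \geq \delta$, hence $n^\ast \geq \binom{k+1}{2}/\delta$.'' The implication runs the wrong way — $\binom{k+1}{2}/(n^\ast-1) \geq \delta$ gives $n^\ast - 1 \leq \binom{k+1}{2}/\delta$, an upper bound. The lower bound you actually need, $n^\ast > \binom{k+1}{2}/\delta$, follows instead from $g(n^\ast) > x$, which gives $\binom{k+1}{2}/n^\ast < \delta$. With that substitution the chain of inequalities in your display is correct, so this is a typo rather than a gap; fix the cited inequality and the proof stands.
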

\smallskip

We would like to have been able to describe the whole set $B$ of critical densities more fully, but at least we can give a full description of the values at most 2, in Theorems~\ref{thm:Bin[0,1]} and~\ref{thm:Bin[1,2]} (and see also Theorem~\ref{thm:addableDensities}). We identify all critical densities in the interval $[0,2)$, and see that all are achieved. (We noted already that the density 2 is achieved.) 
Due to the significantly more complicated statement for the sub-interval $[1,2)$, we divide these results into two parts. The first part concerns the interval $[0,1)$, and is essentially due to Eppstein~\cite{densitiesMinorClosed}, but our approach allows us to offer a shorter proof.
\begin{theorem}
\label{thm:Bin[0,1]}
 We have
\begin{equation}
\label{eqn:[0,1)}
  A \cap [0,1) = B \cap [0,1) = \left \{ \frac{t-1}{t} : t \geq 1 \right \}.
 \end{equation}
\end{theorem}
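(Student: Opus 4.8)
The plan is to prove both equalities at once by establishing the chain of inclusions
\[
 \left\{ \tfrac{t-1}{t} : t \geq 1 \right\} \;\subseteq\; A \cap [0,1) \;\subseteq\; B \cap [0,1) \;\subseteq\; \left\{ \tfrac{t-1}{t} : t \geq 1 \right\},
\]
where throughout $t$ ranges over the positive integers (note that allowing $t$ to be real would give the whole interval $[0,1)$, which is clearly not the intended reading, since $B$ is countable).

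For the first inclusion, fix an integer $t \geq 1$ and consider a tree $T$ on $t$ vertices, say the path $P_t$; it has density $(t-1)/t \in [0,1)$. Every proper minor of $T$ is a forest on some $k \leq t$ vertices with $c \geq 1$ components, and hence has density $1 - c/k \leq 1 - 1/t = \rho(T)$. So $T$ is minor-balanced (in fact strictly so, the only minor of $T$ attaining density $1-1/t$ being $T$ itself), and therefore $(t-1)/t \in A$. The middle inclusion is then immediate from $A \subseteq \bar A = B$ in Theorem~\ref{thm:structureOfB}(b); alternatively, one argues directly that for a minor-balanced $G$ of density $\beta$ the decomposable class of graphs all of whose components are minors of $G$ has critical density $\beta$ (taking $\Ex(K_2)$ when $\beta = 0$).

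The last inclusion is the main point. Let $\cA$ be a proper minor-closed class with $\beta_\cA < 1$; I claim $\beta_\cA = (t-1)/t$, where $t$ is the largest order of a tree contained in $\cA$. First, every graph in $\cA$ is a forest: a graph with a cycle has some $C_k$ ($k \geq 3$) as a minor, which would give $\beta_\cA \geq \rho(C_k) = 1$. Second, $t$ is finite: since $\cA$ is closed under deleting leaves, if it contained trees of unbounded order it would contain a tree on every number of vertices, and then $\beta_\cA \geq \sup_n (n-1)/n = 1$; also $t \geq 1$ since $K_1 \in \cA$. Now each component of each $G \in \cA$, being a connected minor of $G$, is a tree on at most $t$ vertices, so if $G$ has $n$ vertices and $c$ components then $n \leq ct$, whence
\[
 \rho(G) = \frac{n-c}{n} = 1 - \frac cn \leq 1 - \frac1t .
\]
Thus $\beta_\cA \leq (t-1)/t$, and $\beta_\cA \geq (t-1)/t$ because $\cA$ contains a tree on $t$ vertices, so $\beta_\cA = (t-1)/t$, as claimed.

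There is no serious obstacle here. The single substantive observation is that a critical density strictly below $1$ forces the class to consist of forests; everything after that is an averaging argument over the number of components, together with the routine remark that a minor-closed class of forests either contains arbitrarily large trees (density supremum $1$) or has a finite bound on the orders of its trees. The only care required is bookkeeping: checking that the relevant suprema are attained, that the argument nowhere assumes $\cA$ is decomposable, and that the degenerate case $t = 1$ (the class of edgeless graphs, with $\beta_\cA = 0$) is covered.
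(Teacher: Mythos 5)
Your proof is correct, and it takes a mildly different route from the paper in the forward direction. The paper observes that $\beta_\cA < 1$ forces some star $K_{1,s}$ and some path $P_p$ to be excluded minors (otherwise the densities $s/(s+1)$ resp.\ $(p-1)/p$ would push $\beta_\cA$ up to $1$), and then invokes the Ramsey-type fact that excluding both a star and a path as minors bounds the number of connected graphs in the class; the maximum density is then attained by a connected $G_0$, which must satisfy $e(G_0) = v(G_0)-1$ since $v(G_0)-1 \leq e(G_0) < v(G_0)$. You instead note immediately that $\beta_\cA < 1$ excludes $C_3$ and hence forces every member of $\cA$ to be a forest, then bound the order of trees in $\cA$ by the leaf-deletion argument, and finish with the same averaging over components $\rho(G) = 1 - c/n \leq 1 - 1/t$. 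Your route is arguably a little cleaner: it bypasses the star/path finiteness argument and reduces instantly to forests. For the converse inclusion the paper uses $\Ex(\{C_3, K_{1,3}, P_{t+1}\})$ (forests of paths of order $\leq t$), while you exhibit $P_t$ directly as a (strictly) minor-balanced graph; your version makes the $A$-membership explicit, which the paper is slightly terse about. Both proofs are complete; the substance is the same.
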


The second part covers the interval $[1,2)$, which we partition into the subintervals $[2 - \frac{1}{k-1}, 2-\frac1k)$ for $k=2,3,\ldots$
\begin{theorem}
\label{thm:Bin[1,2]}
 Let $\cA$ be a proper minor-closed class of graphs. Let $k \geq 2$, and suppose that
\[
  2-\frac{1}{k-1} \leq \beta_{\cA} < 2-\frac1k.
\]
Then (a) either $\beta_{\cA}$ is $2-\tfrac{1}{k-1}$,
 or for some $n = mk+1+t$, where $m \geq 1$ and $0 \leq t \leq k-1$, and
\begin{equation}
 \label{eqn:betaIntervalBoundOnN}
  n > (2k-1-t)(k-1),
\end{equation}
 we have
\begin{equation} \label{eqn:minorDensity}
   \beta_\cA = 2-\frac{1}{k} - \frac{2k-t-1}{kn};
\end{equation}
and (b) each such value is in $A$ (that is, is achievable).
\end{theorem}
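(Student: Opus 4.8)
The plan is to treat the two parts separately, in both cases through density-minimal (strictly minor-balanced) graphs, using Theorem~\ref{thm:structureOfB} to move between $B$, $B_1$ and $\bar A$. For part~(a) the first step is a reduction. If the supremum defining $\beta_\cA$ is attained by some $G\in\cA$, then replacing $G$ by a minor of smallest order with density $\beta_\cA$ (every minor of $G$ lies in $\cA$ and so has density at most $\beta_\cA$) produces a density-minimal graph in $\cA$ of density $\beta_\cA$; if the supremum is not attained, then $\beta_\cA$ is a limit of densities of density-minimal graphs in $\cA$, each strictly below $\beta_\cA$. For a fixed $k$ the values appearing in~\eqref{eqn:minorDensity} form a discrete set whose only accumulation point is $2-\tfrac1k\notin[2-\tfrac1{k-1},2-\tfrac1k)$ and whose infimum strictly exceeds $2-\tfrac1{k-1}$. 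Hence it is enough to prove that every density-minimal graph $G$ with $\rho(G)$ in the \emph{open} interval $(2-\tfrac1{k-1},2-\tfrac1k)$ has $\rho(G)$ of the form~\eqref{eqn:minorDensity}; in the non-attained case this then forces $\beta_\cA=2-\tfrac1{k-1}$.

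For that reduced statement I would analyse a density-minimal $G$ with $\rho(G)=e(G)/v(G)$ in the open interval directly. Since $\rho(G)\ge1$, deleting a vertex of degree at most $1$ would not lower the density, so $\delta(G)\ge2$; and since contracting an edge $uv$ lowers the density only when $u$ and $v$ have more than $\rho(G)-1<1$ common neighbours, every edge of $G$ lies in a triangle. The heart of the argument is then a structural description of density-minimal graphs of density below $2$: their blocks come from a restricted family (cycles, fans, and a short list of small dense graphs such as $K_4$, $K_5-e$ and $K_5$ minus a matching), with strong constraints on how such blocks may be arranged in the block tree. Accounting for the vertices and edges contributed by the blocks, together with $\rho(G)<2-\tfrac1k$, then forces $\rho(G)$ to equal the value in~\eqref{eqn:minorDensity} for suitable integers $m\ge1$, $0\le t\le k-1$ and $n=mk+1+t$—here $n$ certifies the value of the density rather than the order of $G$—and an elementary computation identifies the condition $\rho(G)>2-\tfrac1{k-1}$ with~\eqref{eqn:betaIntervalBoundOnN}.

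For part~(b), given a valid triple $(k,m,t)$ I would exhibit an explicit minor-balanced graph $H$: take $m-1$ disjoint paths on $k$ vertices and one further disjoint path on $k+t$ vertices, and add a new vertex $h$ (the \emph{hub}) joined to every vertex of every path; equivalently, glue $m-1$ copies of the fan on $k+1$ vertices and one copy of the fan on $k+t+1$ vertices along their apexes. A direct count gives $v(H)=mk+1+t=n$ and $e(H)=m(2k-1)+2t=2n-m-2$, so $\rho(H)$ is precisely the value in~\eqref{eqn:minorDensity}; once $H$ is known to be minor-balanced this places the value in $A$ (indeed the decomposable class of all graphs whose components are minors of $H$ has critical density $\rho(H)$). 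To see that $H$ is minor-balanced: every vertex- or edge-deletion strictly lowers the density (as $\rho(H)<2$ and $\delta(H)\ge2$), and every edge-contraction does too (each edge lies in a triangle while $\rho(H)<2$); the full statement follows by an induction over books of fans (a hub joined to all vertices of several disjoint paths), using that $h$ is the unique cut-vertex of $H$, so that every $2$-connected minor of $H$ lies inside a single fan, together with the inequality that $\rho(H)$ is at least the density of the fan on the longest path—the one place the hypotheses enter, a short monotonicity computation confirming it whenever $t\le k-1$ (the regime we need), with equality exactly when $m=1$.

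The step I expect to be the main obstacle is the structural analysis in part~(a): controlling the block structure of an arbitrary density-minimal graph of density in $[2-\tfrac1{k-1},2-\tfrac1k)$ tightly enough to pin its density down to the one-parameter family~\eqref{eqn:minorDensity}. Density-minimality is a global condition, so one must show that both the admissible blocks and the admissible ways of arranging them are severely limited, ruling out configurations whose density would lie in the interval without matching~\eqref{eqn:minorDensity}; it is exactly this that produces the bound~\eqref{eqn:betaIntervalBoundOnN} and the normalisation $t\le k-1$. By contrast the verification in part~(b) is routine, the one mild subtlety being the cut-vertex argument confining dense minors to a single fan. This refines Eppstein's analysis in~\cite{densitiesMinorClosed}.
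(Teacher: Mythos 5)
The overall scaffolding in your proposal for part~(a) — reduce via well-ordering of $B$ to density-minimal graphs, use discreteness of the candidate set to dispose of the non-attained case, and then analyse a single density-minimal $G$ in the open interval — matches the paper's reduction, and your observations that $\delta(G)\ge 2$ and that every edge lies in a triangle are also correct. However, the heart of your argument, the claimed structural description of density-minimal graphs of density below $2$, is wrong. You assert their blocks come from cycles, fans ($P_\ell+K_1$), and a short finite list of small dense graphs, but every $2$-tree is strictly minor-balanced (as noted in the paper, Section~3.1), and $2$-trees of density in $(2-\tfrac1{k-1},2-\tfrac1k)$ can have arbitrary branching shape. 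Concretely, take the $2$-tree on $7$ vertices obtained from a triangle $v_1v_2v_3$ by adding $v_4$ on $v_1v_2$, $v_5$ on $v_1v_3$, $v_6$ on $v_2v_3$, $v_7$ on $v_1v_4$; this is $2$-connected, strictly minor-balanced, has $11$ edges and density $\tfrac{11}{7}\in(\tfrac32,\tfrac53)$, and its degree sequence $(5,4,4,3,2,2,2)$ rules out cycles, fans, $K^+_{2,a}$, and all the small graphs on your list. So the classification you would rely on does not hold, and the rest of part~(a) has no foundation. The paper's insight is that one can sidestep any such classification: it shows via a contraction argument that in any density-minimal $G$ in the interval every edge lies in a $(k+1)$-vertex $2^=$-plant or in some $2^+$-plant, and then Lemma~\ref{lem:k2treesEverywhere} gives $e(G)\ge 2n-2-m$ directly by an exploration argument, which combined with $\rho(G)<2-\tfrac1k$ pins the density to the one-parameter family (with the $s=1$ case folded back into $s=0$). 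This avoids describing the graphs themselves at all.

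Your part~(b) is essentially correct. You use a bouquet of fans $P_\ell+K_1$ glued at their apexes, whereas the paper uses a bouquet of $K^+_{2,a}$'s; for $k\le 3$ these coincide but for $k\ge 4$ they are genuinely different graphs, though with the same vertex/edge counts and hence the same density. Your edge/vertex count $v(H)=n$, $e(H)=2n-2-m$ is right, and the inequality $\rho(H)\ge\rho(\text{largest fan})$, with numerator $(m-1)(2k-1-t)\ge 0$, is indeed the only place $t\le k-1$ enters. One caution: $H$ is \emph{not} $1$-minor-balanced (for $m>1$, $t>0$ the largest fan has $\rho_1>\rho_1(H)$), so you cannot appeal to the paper's Lemma~\ref{lem:G+strictly(t+1)Balanced}/Lemma~\ref{lem:1minbal} route; but your direct argument via $2$-connected minors confined to a single fan, supplemented by the easy computation for non-$2$-connected minors, does go through. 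Thus (b) is a valid alternative construction, while (a) needs to be rebuilt around a density lower bound (as in Lemma~\ref{lem:k2treesEverywhere}) rather than a structural classification.
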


Consider the special case $k=2$ in the above theorem, concerning values $\beta \in [1,\frac32)$. We see that these are exactly the values $\frac32 - \frac3{2n}$ for odd integers $n \geq 5$ and $\frac32 - \frac1{n}$ for even integers $n \geq 2$. This result, together with Theorem \ref{thm:Bin[0,1]}, constitute Theorem 22 of Eppstein~\cite{densitiesMinorClosed}. We now know exactly the critical densities $\beta \leq 2$; and we know that for each such $\beta$ there is a minor-balanced graph $G$ with $\rho(G)=\beta$. Can we insist that $G$ is regular or nearly regular? We shall see in Subsection~\ref{rem.near-reg} that we can always insist that the gap between maximum and minimum degrees is at most 2, but not necessarily at most 1.
\smallskip

An important property of a class $\cA$ of graphs is being `addable', see McDiarmid, Steger and Welsh~\cite{randomPlanarAndOther}. We say that $\cA$ is \emph{bridge-addable} if for any $G \in \cA$ and any vertices $u,v$ belonging to different components of $G$, the graph obtained by adding to $G$ the edge $\{u,v\}$ is also in $\cA$; and $\cA$ is \emph{addable} if it is both decomposable and bridge-addable. As noted in~\cite{randomMinorClosed}, it is straightforward to check that a minor-closed class $\cA$ is addable if and only if each excluded minor is $2$-connected. Examples of addable minor-closed classes of graphs include forests ($\Ex(K_3)$), series-parallel graphs ($\Ex(K_4)$), and planar graphs $(\Ex(\{ K_5, K_{3,3} \})$). 
 
For $t=1,2,\ldots,$ let
\begin{equation}
\label{eqn:tConnectedDensities}
 B_t = \{ \beta_\cA : \cA \mbox{ is a proper minor-closed class such that each excluded minor is $t$-connected} \}.
\end{equation}
Observe that for $B_1$ this agrees with the earlier definition, and that $B_2$ is the set of critical densities of addable minor-closed classes. Part of the following theorem describes $B_2 \cap [0,2]$, the set of critical densities in $[0,2]$ of addable classes of graphs. We use the notation $1+B$ for $\{x : x=1+\beta \mbox{ for some } \beta \in B \}$.

\begin{theorem}
 \label{thm:addableDensities}
 We have $1+ B \subsetneq B_2 \subseteq B'$, where $B'$ is the set of accumulation points of $B$; $B_2 \cap [0,1) = B' \cap [0,1) = \emptyset$ and
 \[
 (1 + B) \cap [1,2] = B_2 \cap [1,2] = B' \cap [1,2] = \left \{ 2-\frac{1}{k} : k \geq 1 \right \} \cup \{2\}.
 \]
\end{theorem}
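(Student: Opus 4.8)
The plan is to establish the chain of inclusions and the explicit description in several stages, leaning heavily on Theorem~\ref{thm:structureOfB} and on the structural facts already recorded in the excerpt. First I would prove the inclusion $1+B \subseteq B_2$: given a proper minor-closed class $\cA$, I would produce an addable class $\cA'$ with $\beta_{\cA'} = 1 + \beta_\cA$. The natural candidate is to take the excluded minors of $\cA$ and ``cone'' them, or more robustly, to let $\cA'$ be the class whose excluded minors are the $2$-connected graphs obtained by applying a suitable apex-type operation to the excluded minors of $\cA$; the densest graphs in $\cA'_n$ should look like a densest graph of $\cA$ on roughly $n$ vertices together with a dominating vertex, contributing one extra edge per vertex in the limit. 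One must check both directions: that adding an apex vertex to graphs in $\cA$ stays inside $\cA'$ (so $\beta_{\cA'} \geq 1 + \lambda_\cA = 1 + \beta_\cA$), and that no graph in $\cA'$ can beat this bound (so $\beta_{\cA'} \leq 1 + \beta_\cA$), the latter using that deleting a vertex from a graph in $\cA'$ should land in $\cA$. Since $\cA'$ is addable, $\beta_{\cA'} \in B_2$, and since $1+B$ is unbounded while $B_2$ contains values like $3/2$ (from $\Ex(K_4)$-type constructions restricted appropriately) that are not of the form $1+\beta$, the inclusion is strict; I would exhibit one explicit small witness for strictness, e.g.\ an addable class with $\beta = 2 - \tfrac1k$ for some $k \geq 2$, noting $2-\tfrac1k \notin 1+B$ because $1-\tfrac1k \notin B$ by Theorem~\ref{thm:Bin[0,1]} for $k \geq 3$, and handling $k=2$ separately.

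Next I would prove $B_2 \subseteq B'$: if $\cA$ is minor-closed with all excluded minors $2$-connected, then $\cA$ is addable, hence decomposable, hence (by the Lemma~5 of~\cite{purityArxiv} cited in the excerpt) $e^*_\cA(n)/n \to \beta_\cA$. The key point is that for an addable class the critical density is approached from below by infinitely many values $e^*_\cA(n)/n$, each of which lies in $B$ (taking the decomposable class generated by a maximizing graph on $n$ vertices, or directly observing $e^*_\cA(n)/n \le \beta_{\cA}$ with these being densities of members of $B$ via $\bar A = B$). Since these values are strictly less than $\beta_\cA$ for all large $n$ (strict because an extra apex-free bridge can always be added in an addable class, pushing density up, so no finite $n$ is extremal unless $\cA$ is trivial) yet converge to $\beta_\cA$, we get $\beta_\cA \in B'$. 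The emptiness statements $B_2 \cap [0,1) = B' \cap [0,1) = \emptyset$ follow from Theorem~\ref{thm:Bin[0,1]}: the set $B \cap [0,1) = \{(t-1)/t : t \geq 1\}$ is a strictly increasing sequence with no accumulation point below $1$, so $B' \cap [0,1) = \emptyset$; and $B_2 \cap [0,1) \subseteq B' \cap [0,1) = \emptyset$.

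The heart of the theorem is the identification $(1+B)\cap[1,2] = B_2 \cap [1,2] = B' \cap [1,2] = \{2 - \tfrac1k : k \geq 1\} \cup \{2\}$. For $(1+B) \cap [1,2]$: by Theorem~\ref{thm:Bin[0,1]}, $B \cap [0,1] = \{(t-1)/t : t \geq 1\} \cup \{1\}$ (the value $1$ coming as a limit/from $\Ex(\text{long cycle})$-type classes, or is it in $\bar A$), so $1 + (B \cap [0,1]) = \{2 - \tfrac1t : t \geq 1\} \cup \{2\}$ after reindexing $t = k$; I need $B \cap (1,2)$ to contribute nothing to $(1+B)\cap[1,2]$, i.e.\ no $\beta \in (0,1)$ with... wait, that is automatic since $1+\beta$ for $\beta \in (0,1)$ lands in $(1,2)$, so I actually need: the only elements of $B$ in $[0,1]$ are those listed, done. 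Then since $B' \supseteq B_2 \supseteq 1+B$ on $[1,2]$ gives one set of inclusions, it remains to show $B' \cap [1,2] \subseteq \{2-\tfrac1k\} \cup \{2\}$, i.e.\ the only accumulation points of $B$ in $[1,2]$ are these. This is exactly where Theorem~\ref{thm:Bin[1,2]} does the work: its explicit formula \eqref{eqn:minorDensity} shows that within each interval $[2 - \tfrac{1}{k-1}, 2 - \tfrac1k)$ the critical densities are $2 - \tfrac1k - \tfrac{2k-t-1}{kn}$, which as $n \to \infty$ (with $k$ fixed) accumulate only at the right endpoint $2 - \tfrac1k$, together with the isolated left endpoint $2 - \tfrac1{k-1}$; and as $k \to \infty$ the endpoints $2 - \tfrac1k$ accumulate at $2$. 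So the accumulation points in $[1,2]$ are precisely $\{2 - \tfrac1k : k \geq 2\} \cup \{2\}$, and one checks $2 - \tfrac11 = 1$ is also a limit point (from the $(t-1)/t$ sequence, or the $k\to\infty$ side is not how $1$ arises --- rather $1$ is the sup of $\{(t-1)/t\}$, hence in $B' $), giving the claimed set with $k \geq 1$. The main obstacle I anticipate is the first inclusion $1+B \subseteq B_2$: constructing the apexed class correctly so that it is genuinely $2$-connected-excluded-minor and computing its critical density exactly requires care about what happens at cut vertices and whether the apex construction interacts cleanly with contractions --- a naive ``add a universal vertex to each excluded minor'' can fail to give $2$-connected excluded minors or can change the density in unexpected ways, so the right construction and its verification is the delicate part; everything else is bookkeeping on top of Theorems~\ref{thm:structureOfB}, \ref{thm:Bin[0,1]} and~\ref{thm:Bin[1,2]}.
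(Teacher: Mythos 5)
Your overall strategy is right for parts of the theorem, but there are two substantive problems, one of which is a genuine error.

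\textbf{The witness for strictness is wrong.} You propose to show $1+B\neq B_2$ by exhibiting an addable class with $\beta=2-\tfrac1k$ for some $k\geq 3$, claiming $1-\tfrac1k\notin B$ by Theorem~\ref{thm:Bin[0,1]}. But $1-\tfrac1k=\tfrac{k-1}{k}$, which \emph{is} of the form $\tfrac{t-1}{t}$ with $t=k$, so $1-\tfrac1k\in B$ for every $k\geq 1$. In fact the theorem itself asserts $(1+B)\cap[1,2]=B_2\cap[1,2]$, so no witness for strictness can live in $[1,2]$. The paper's witness is $\tfrac{25}{11}>2$: one shows $\tfrac{25}{11}\in B_2$ by constructing an explicit $1$-minor-balanced $2$-connected graph ($P_3^+$ with copies of $K_4$ glued onto its outer edges) in Section~\ref{sec:eppsteinsQuestion}, and $\tfrac{14}{11}\notin B$ by Remark~\ref{rem:lowestAbove1} (it lies strictly between the consecutive critical densities $\tfrac54$ and $\tfrac97$). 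This verification is nontrivial and is properly deferred.

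\textbf{Coning excluded minors is the wrong construction for $1+B\subseteq B_2$.} You flag this as the ``delicate part,'' and rightly so: given $\cA=\Ex(\cH)$, the class of $\cA$-apex graphs is not generally addable, and applying an apex-type operation to the excluded minors does not reliably produce $2$-connected excluded minors or the density $1+\beta_\cA$. The paper avoids the excluded-minor side entirely. It picks a (sequence of) connected minor-balanced graph(s) $G$ achieving or approaching $\beta_0$, forms $G^+$ (the complete one-vertex extension), and lets $\cG$ be the class of graphs \emph{each of whose blocks is a minor of $G^+$}. That class is addable by fiat. Lemma~\ref{lem:G+strictly(t+1)Balanced} shows $G^+$ is $1$-minor-balanced, so the supremum of $\rho_1$ over the $2$-connected members of $\cG$ is $\rho_1(G^+)=\rho(G)+1$ (Lemma~\ref{lem:tDensitiesRelation}), and Proposition~\ref{prop.beta-add} then gives $\beta_\cG=1+\beta_0$ exactly. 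That proposition --- that for an addable class $\beta_\cA=\sup\{\rho_1(B):B\in\cA,\ B\ \text{2-connected}\}$ --- is the key lemma you are missing; it is what lets you compute the critical density of the constructed class without any hand-wringing about what the excluded minors look like.

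Your $B_2\subseteq B'$ sketch is broadly the right shape, but the justification that ``an extra apex-free bridge can always be added'' is muddled (adding a bridge between distinct components raises the density, but the densest graph on $n$ vertices may be connected). The clean argument, again via Proposition~\ref{prop.beta-add}, is that every $G\in\cA$ with an edge satisfies $\rho(G)<\rho_1(G)\leq\beta_\cA$, so no member attains $\beta_\cA$; one then takes a densest (minor-balanced) minor $G_1$ of a nearly-extremal $G_0\in\cA$, giving $\beta-\eps<\rho(G_1)<\beta$ with $\rho(G_1)\in A\subseteq B$. Your handling of $B'\cap[0,1)=\emptyset$ and of the explicit set $\left\{2-\tfrac1k\right\}\cup\{2\}$ via Theorems~\ref{thm:Bin[0,1]} and~\ref{thm:Bin[1,2]} matches the paper and is fine.
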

\noindent
The first part of this theorem (that $1+ B \subsetneq B_2 \subseteq B'$) extends Theorem 22 of Eppstein~\cite{densitiesMinorClosed}. Both the statement that $1+ B \neq B'$ and the last part of the theorem (describing $B' \cap [1,2]$) answer open questions (7 and 4 respectively) from the same paper (with both answers in the negative). Note that Theorem~\ref{thm:asymptoticallyDense} and the first part of Theorem~\ref{thm:addableDensities} together imply the following fact.
\begin{corollary}
 We have $\delta_{B_2}(x) = O(x^{-2})$ and $\delta_{B'}(x) = O(x^{-2})$ as $x \to \infty$.
 
\end{corollary}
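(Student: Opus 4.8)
The plan is to obtain the corollary directly from Theorem~\ref{thm:asymptoticallyDense} together with the inclusions $1+B \subseteq B_2 \subseteq B'$ supplied by Theorem~\ref{thm:addableDensities}, with essentially no work beyond bookkeeping. First I would record that $B_2 \subseteq B$ (trivially, since each class counted in $B_2$ is in particular a proper minor-closed class) and that $B' \subseteq B$ (since $B$ is closed, by Theorem~\ref{thm:structureOfB}). As $B$ is well-ordered by $<$, so is every subset of it; and since $1+B \subseteq B_2 \subseteq B'$ while $B$ is unbounded, both $B_2$ and $B'$ are unbounded. Hence for every $x \geq 0$ the sets $\{\beta \in B_2 : \beta > x\}$ and $\{\beta \in B' : \beta > x\}$ are non-empty and have least elements, so the gaps $\delta_{B_2}(x)$ and $\delta_{B'}(x)$ are well-defined.

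Next I would fix $x \geq 1$ and apply Theorem~\ref{thm:asymptoticallyDense} at the point $x-1$. Writing $y$ for the least element of $B$ with $y > x-1$, this gives $y-(x-1) = \delta_B(x-1) = O\big((x-1)^{-2}\big) = O(x^{-2})$. Then $1+y \in 1+B \subseteq B_2 \subseteq B'$, and $1+y > x$ with $(1+y)-x = y-(x-1) = O(x^{-2})$. Consequently the least element of $B_2$ exceeding $x$, and likewise the least element of $B'$ exceeding $x$, is at most $1+y$, so $\delta_{B_2}(x) \leq (1+y)-x = O(x^{-2})$ and $\delta_{B'}(x) = O(x^{-2})$, as required.

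I do not expect any genuine obstacle here; the only points needing a little care are checking that the gap functions $\delta_{B_2}$ and $\delta_{B'}$ are defined at all (dealt with by the well-ordering and unboundedness observations above) and noting that shifting the argument of $\delta_B$ by $1$ affects only the implied constant, not the exponent. Thus the argument is essentially a one-line deduction once Theorems~\ref{thm:asymptoticallyDense} and~\ref{thm:addableDensities} are in hand.
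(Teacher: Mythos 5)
Your proposal is correct and matches the paper's intent exactly: the paper states that the corollary follows by combining Theorem~\ref{thm:asymptoticallyDense} with the inclusion $1+B \subseteq B_2 \subseteq B'$ from Theorem~\ref{thm:addableDensities}, and your shift-by-one argument (finding $y \in B$ with $y > x-1$ and $y-(x-1)=O(x^{-2})$, then noting $1+y \in B_2 \subseteq B'$ lies just above $x$) is precisely the deduction the paper leaves to the reader. The preliminary checks that $B_2, B' \subseteq B$ and that the gap functions are well-defined via well-ordering and unboundedness are sensible bookkeeping and harmless.
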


It is not clear to what extent the sets $B_t$ are interesting objects of study for large values of $t$. However, the study of $B_1$ and $B_2$ is well-motivated by the fact that they consist of the critical densities of the decomposable and the addable minor-closed classes of graphs respectively. Moreover, since $K_5$ and $K_{3,3}$ giving rise to the minor-closed class of planar graphs are both $3$-connected, we would also want to learn more about the structure of $B_3$. After proving Theorem~\ref{thm:addableDensities}, we obtain a first result in that direction, Proposition~\ref{prop:1+B_2inB3}, which says that $1+ B_2 \subseteq B_3$.
See also Propositions~\ref{prop.beta-add} and~\ref{prop.beta3} concerning other expressions for $\beta_{\cA}$  when all excluded minors are $t$-connected, for $t=2$ and $3$.
In particular, we see that, if each excluded minor is $t$-connected and $K_{t+1} \in \cA$, then $\beta_{\cA}$ is the supremum of the `$(t-1)$-density' $\rho_{t-1}(G)$ (see the definition of $\rho_t(G)$ in \eqref{eq:tDensity}) over the $t$-connected graphs $G$ in $\cA$.  (This is also true for $t=1$ but not for $t=4$.)
\bigskip

\noindent
\emph{Plan of the paper}

In the next section, we first prove Theorem~\ref{thm:structureOfB} on the general structure of the set $B$ of critical densities, and then prove Theorem~\ref{thm:asymptoticallyDense}, showing that $B$ is `asymptotically dense'. In Section~\ref{sec.upto2} we prove Theorems~\ref{thm:Bin[0,1]} and~\ref{thm:Bin[1,2]} which give a full description of $B \cap [0,2]$: Theorem~\ref{thm:Bin[1,2]} is the main work involved in the paper. In Section~\ref{sec.addable} we discuss the critical densities of minor-closed classes with all excluded minors $t$-connected, in particular the case $t=2$ corresponding to addable classes; and we prove Theorem~\ref{thm:addableDensities} (except for proving that $1+ B \neq B'$). In Section~\ref{sec:eppsteinsQuestion} we begin the study of the structure of the set $B$ above the value $2$, which allows us to show that $1+ B \neq B'$ and to answer a further open question from~\cite{densitiesMinorClosed}. Finally, in Section~\ref{sec:openProblems} we present some open problems.

\section{General structure of $B$}
\label{sec:generalStructure}

In this section we consider the general structure of the set $B$, and in particular we prove Theorems~\ref{thm:structureOfB} and~\ref{thm:asymptoticallyDense}.
\begin{proof}[Proof of Theorem~\ref{thm:structureOfB}]
Since there are only countably many finite sets of finite graphs, and by the Robertson-Seymour theorem \cite{robertsonSeymour} every minor-closed family of graphs can be characterised by such a family, it follows that $B$ is countable.  We shall see shortly that $B$ is the closure $\bar{A}$ of $A$, and it will follow of course that $B$ is closed.

Now let us show that $B$ is well-ordered by $<$. (This follows also from Theorem 19 of Eppstein~\cite{densitiesMinorClosed}, but for completeness we give a short proof here.) Assume that $\{\beta_n \}_{n \geq 1}$ is an infinite strictly decreasing sequence of points in $B$. Then, for all $n \geq 1$ there is some minor-balanced graph $G_n$ satisfying $\beta_{n+1} < \rho(G_n) \leq \beta_n$. Consequently, for all $n \geq 1$ we have $\rho(G_n) > \rho(G_{n+1})$. Now, let us consider the sequence $\{G_{n}\}_{n \geq 1}$. By the Robertson-Seymour theorem, there is some $1 \leq i < j$ such that $G_i$ is a minor of $G_j$. However, $G_j$ is minor-balanced, so every minor of $G_j$ has density at most equal to that of $G_j$, and we have a contradiction. Thus no infinite decreasing sequences of points exist in $B$, and so $B$ is well-ordered by $<$.
\smallskip

We have now proved part (a) of the theorem, and we move on to prove the first statement in part (b).  We shall show in four steps that $B = B_1$, $B_1 \subseteq L$, $L \subseteq \bar{A}$ and $\bar{A} \subseteq B$; and that will of course show that $B=B_1=L=\bar{A}$, as required.
\begin{enumerate}
\item $B = B_1$.
Let $\beta \in B$, and let $\cA_0$ be a proper minor-closed class of graphs with $\beta_{\cA_0}=\beta$. Let $\cA_1$ be the set of graphs such that each component is in $\cA_0$.  Then $\cA_1$  is minor-closed and decomposable.  Clearly $\cA_1 \supseteq \cA_0$, so $\beta_{\cA_1} \geq \beta$.  But let $G \in \cA_1$, with components $H_1,\ldots,H_k$.  Then each $H_i \in \cA_0$, so $\frac{e(H_i)}{v(H_i)} \leq \beta$. Hence
 \[ 
 \rho(G) = \frac{e(G)}{v(G)}= \frac{\sum_i e(H_i)}{\sum_i v(H_i)} \leq \beta,
 \]
and so $\beta_{\cA_1} = \beta$.  Thus $B \subseteq B_1$ and so $B=B_1$, as required.

\item $B_1 \subseteq L$.  Let $\beta \in B_1$.  Let $\cA$ be a decomposable minor-closed class of graphs with $\beta_\cA=\beta$. As we noted earlier, by Lemma 5 in \cite{purityArxiv} we have $\lambda_{\cA}=\beta_{\cA}$, so $\beta \in L$.
\item $L \subseteq \bar{A}$.  Theorem 20 of Eppstein~\cite{densitiesMinorClosed} says that $L=\bar{A}$.
\item $\bar{A} \subseteq B$.  Let $\beta \in \bar{A}$: we must show that $\beta \in B$.  Since $A \subseteq B$, we may assume wlog that $\beta \not\in A$. Since $B$ is well-ordered, there exists $\varepsilon>0$ such that $A \cap [\beta,\beta+\varepsilon)=\emptyset$. Thus there is an infinite sequence of minor-balanced graphs $\{G_k\}_{k \geq 1}$ such that $\rho(G_k)$ strictly increases to $\beta$. Let $\tilde{\cA}$ be the (decomposable) class of graphs such that each component is a minor of some graph $G_k$. Then each component of each graph $G \in \tilde{\cA}$ has density at most $\rho(G_k)$ for some $k$, so arguing as in the first step above we have $\rho(G)<\beta$; and it follows that $\beta_{\tilde{\cA}} \leq \beta$. But also $\beta_{\tilde{\cA}} \geq \rho(G_k)$ for each $k$, so $\beta_{\tilde{\cA}} \geq \beta$. Thus $\beta_{\tilde{\cA}}= \beta$, and $\beta \in B$, as required.
\end{enumerate}

It remains to show that $A'=B'$. Of course $A' \subseteq B'$.  Let $\beta \in B'$: we must show that $\beta \in A'$.  There is an infinite sequence $\beta_k \in B$ strictly increasing to $\beta$.  If $\beta_k \in A$ infinitely often then of course $\beta \in A'$: thus we may assume wlog that for each $k$ we have $\beta_k \not\in A$, and so since $B=\bar{A}$ we have $\beta_k \in \bar{A} \setminus A$.  Thus for each~$k$ there is a minor-balanced graph $G_k$ such that $\beta_k - \frac1{k} < \rho(G_k) < \beta_k$.  Then $\rho(G_k) \in A$ and $\rho(G_k) \to \beta$ as $k \to \infty$, so $\beta \in A'$, as required.
\end{proof}

\begin{proof}[Proof of Theorem~\ref{thm:asymptoticallyDense}]
 We prove the theorem by showing that all graphs in two particular families are minor-balanced, and that as their densities grow, the gaps between the densities shrink appropriately. Let us start by introducing the first family we shall work with.
 
 Let $k \geq 2$, let $n=k(k+1)+2$ (observe that $n$ is even), and let $0 \leq m \leq n$ be such that if $m > n/2$ then $(m - n/2)$ is a multiple of $3$. We construct the graph $G_k(m)$ as follows. Start with a clique on a set $X$ of $k$ vertices. Then, put a complete bipartite graph between $X$ and a set $Y$ of $n$ vertices. Finally, we construct the graph on $Y$ as follows:
 \begin{enumerate}
  \item If $m \leq n/2$, let $Y$ induce an $m$-edge matching.
  \item Otherwise, if $m > n/2$ and $(m - n/2)$ is a multiple of $3$, then let $Y$ induce $2(m - n/2)/3$ triangles and a perfect matching on the remainder of $Y$.
 \end{enumerate}
 For example, we present the graph $G_2(7)$ in Figure \ref{fig:G_k(m)}.
 
 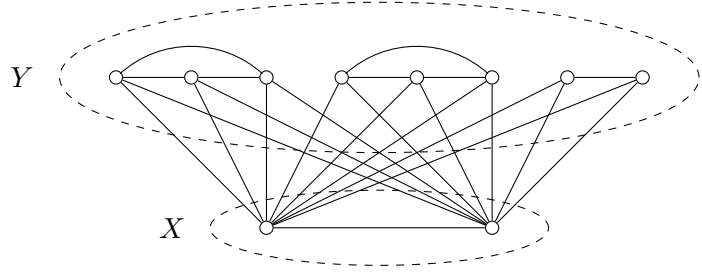
\begin{figure}[htb] \centering
  \begin{tikzpicture}[scale=\examplescale]
    \tikzstyle{vertex}=[draw,shape=circle,minimum size=5pt,inner sep=0pt]
    
    \draw (2,1) ++(132:1.414) arc (132:48:1.414);
    
    \draw (5,1) ++(132:1.414) arc (132:48:1.414);
    
    \node[vertex] (X-1) at (3,0) {~};
    \node[vertex] (X-2) at (6,0) {~};
    \foreach \x in {1,2,3,4,5,6,7,8} {
      \node[vertex] (Y-\x) at (\x,2) {~};
    }
    \foreach \x in {1,2} {
      \foreach \y in {1,2,3,4,5,6,7,8} {
        \draw [color=black] (X-\x) -- (Y-\y);
      }
    }
    
    \draw [color=black] (X-1) -- (X-2);
    
    \foreach \x/\y in {1/2, 2/3, 4/5, 5/6, 7/8} {
      \draw [color=black] (Y-\x) -- (Y-\y);
    }
    
    \draw[black] (1.75,0) node {$X$};
    \draw[black, dashed] (4.5,0) ellipse (2.25cm and 0.5cm);
    \draw[black] (-0.25,2) node {$Y$};
    \draw[black, dashed] (4.5,2) ellipse (4.25cm and 1cm);
     
  \end{tikzpicture}
  \caption{Graph $G_2(7)$, with the set $X$ inducing a clique on $k=2$ vertices, and the set $Y$ containing $n=2(2+1)+2=8$ vertices, inducing $2(7-8/2)/3 = 2$ triangles and a perfect matching on the remaining $2$ vertices.}
  \label{fig:G_k(m)}
\end{figure}
 
 Let us start by giving bounds on the density of $G_k(m)$. When $m=0$ we have
 \[
 \begin{split}
  \rho(G_k(0)) & = \frac{\binom{k}{2}+nk}{n+k} = \frac{k^2-k+2k^3+2k^2+4k}{2(k+k^2+k+2)} \\
                              & = \frac{2k^3+3k^2+3k}{2(k^2+2k+2)}
                              = k-\frac12+\frac{k+2}{2(k^2+2k+2)}
                              : = a_k.
 \end{split}
 \]
Thus for $0 \leq m \leq n/2$ we have
\[
\rho(G_k(m)) = \rho(G_k(0)) + \frac{m}{n+k} = k-\frac12+\frac{k+2+2m}{2(k^2+2k+2)}.
\]
Hence 
\[
\rho(G_k(m+1)) - \rho(G_k(m)) = \frac1{k^2+2k+2} \;\; \mbox{ for } \; 0\leq m < n/2.
\]
Also, as an aside, note that $\rho(G_k(n/2 -1))=k$.
\smallskip
 Now consider $m$ with $n/2< m \leq n$ and $m-n/2$ divisible by 3 (note that the smallest such $m$ is $n/2+3$).  Note first that 
\[
\rho(G_k(m+3))-\rho(G_k(m)) = \frac{3}{k^2+2k+2}.
\]
Let $n_1$ be the largest $m \leq n$ such that $m-n/2$ is divisible by 3, so $n_1=n-\delta_1$ where $\delta_1$ is 1 or 2 (note that $n$ is not divisible by 3). Note that the number of vertices in $Y$ in triangles is $3 \cdot 2(n-\delta_1 - n/2)/3 = n-2 \delta_1$.  Thus
\[
\begin{split}
  \rho(G_k(n_1))
  & = \frac{\binom{k}{2} + (n-2 \delta_1)(k+1) + 2 \delta_1(k + 1/2)}{k+n}\\
  & = \frac{\binom{k}{2} + n(k+1) - \delta_1}{k+n} \; = \; \frac{ (2k+2) n +k^2 -k - 2\delta_1}{2(k+n)}  \\
  & = \frac{(2k+1)(k+n) -(2k+1)k +k^2 -k +n - 2\delta_1}{2(k+n)}\\
  & = k+ \frac12 - \frac{k-2 +2\delta_1}{2(k+n)} \; \; := b_k.
\end{split}
\]
Therefore, for each $k \geq 2$, the densities of the graphs $G_k(m)$ give a cover of the interval $[a_k, b_k]$ with `mesh' (gaps) $< 3/k^2$.

To prove Theorem~\ref{thm:asymptoticallyDense}, we introduce a second family of graphs $F_k(m)$ to close the gaps between consecutive intervals $[a_k, b_k]$, and then show that the graphs we use (the graphs $G_k(m)$ and $F_k(m)$) are minor-balanced. Then the classes of graphs with all components being minors of our graphs give us the desired values in $B$. 
 
First, let us work on closing the gap between the intervals $[a_k,b_k]$ and $[a_{k+1},b_{k+1}]$ for some $k \geq 2$. Given $n = k(k+1)+2$, let $n_2$ be the largest multiple of $3$ at most $n$, so that $n_2 =n- \delta_2$ where $\delta_2$ is 1 or 2, and $n_2 \geq k(k+1)$. (In fact $\delta_2=3-\delta_1$.) Given $m \geq n_2 \geq k(k+1)$ and divisible by 3, let $F_k(m)$ be the union of $m/3$ cliques on $k+3$ vertices each, all sharing a common set of $k$ vertices. Then $v(F_k(m))= k+m$ and $e(F_k(m))= \binom{k}{2} + m(k+1)$. For example, we present the graph $F_2(9)$ in Figure \ref{fig:F_k(m)}.
 
 \begin{figure}[htb] \centering
  \begin{tikzpicture}[scale=\examplescale]
    \tikzstyle{vertex}=[draw,shape=circle,minimum size=5pt,inner sep=0pt]
    
    \draw (2,1) ++(132:1.414) arc (132:48:1.414);
    
    \draw (5,1) ++(132:1.414) arc (132:48:1.414);
    
    \draw (8,1) ++(132:1.414) arc (132:48:1.414);
    
    \node[vertex] (X-1) at (4,0) {~};
    \node[vertex] (X-2) at (6,0) {~};
    \foreach \x in {1,2,3,4,5,6,7,8,9} {
      \node[vertex] (Y-\x) at (\x,2) {~};
    }
    \foreach \x in {1,2} {
      \foreach \y in {1,2,3,4,5,6,7,8,9} {
        \draw [color=black] (X-\x) -- (Y-\y);
      }
    }
    
    \draw [color=black] (X-1) -- (X-2);
    
    \foreach \x/\y in {1/2, 2/3, 4/5, 5/6, 7/8, 8/9} {
      \draw [color=black] (Y-\x) -- (Y-\y);
    }
     
  \end{tikzpicture}
  \caption{Graph $F_2(9)$, consisting of $9/3 = 3$ cliques each on $2+3=5$ vertices, sharing a common set of $2$ vertices.}
  \label{fig:F_k(m)}
\end{figure}
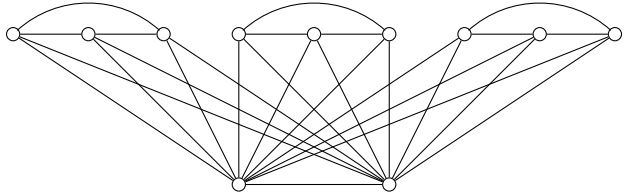

Thus
\[
 \begin{split}
 \rho(F_k(m)) & = \frac{\binom{k}{2} + m(k+1)}{k+m} = \frac{k^2-k + 2m(k+1)}{2(k+m)} \\
              & = \frac{(2k+1)(m +k) +m - k^2-2k}{2(k+m)}\\
              & = k+\frac12 + \frac{m-k^2-2k}{2(k+m)}.
\end{split}
\]
Hence
 \[
 \rho(F_k(m+3))-\rho(F_k(m))  = \frac{3k(3+k)}{2(k+m)(k+m+3)}  \leq \frac{3k(3+k)}{2(k^2+2k)(k^2+2k+3)},
 \]
and so, since $k(k+3) < (k+2)^2$,
 \[
 0 \leq \rho(F_k(m+3))-\rho(F_k(m)) \leq \frac{3k(3+k)}{2 k^2(k+2)^2} < \frac{3}{2k^2}.
 \]  
Also
\[
 \begin{split}
\rho(F_k(n_2)) & = k+\frac12 + \frac{k^2+k+2-\delta_2-k^2-2k}{2(k+n_2)} \\
               & = k+\frac12 - \frac{k-2+\delta_2}{2(k+n - \delta_2)}.
\end{split}
\]
It is now easy to check that
\[
|\rho(F_k(n_2))- b_k| = |\rho(F_k(n_2))- \rho(G_{k}(n_1))| = O(1/k^2).
\]
Also, since $k^2+k \leq n_2 \leq k^2+k+1$, we have
\[
 \begin{split}
  \rho(F_k(n_2+4k)) & = \frac{\binom{k}{2}+n_2k+4k^2+n_2+4k}{n_2+4k+k} \\
		    & \geq \frac{\binom{k}{2}+k^3+k^2+4k^2+k^2+k+4k}{k^2+k+5k+1} \\
		    & = \frac{2k^3+13k^2+9k}{2(k^2+6k+1)} \\
		    & = \frac{2k(k^2+6k+1) + (k^2+6k+1) +k-1}{2(k^2+6k+1)}\\
		    & = k+\frac12 + \frac{k-1}{2(k^2+5k+1)} \;\; : = c_{k}.
 \end{split} 
 \]
We have $|c_k - a_{k+1}| = O(1/k^2)$. Hence the densities $\rho(F_k(m))$ provide us with the required mesh of the gaps $(b_k, a_{k+1})$ between intervals covered by the densities of the graphs $G_k(m)$.
\smallskip
 
 Hence what remains is to show that the graphs $G_k(m)$ and $F_k(m)$ are minor-balanced. For the graphs $G_k(m)$, since we want to find their minors with the highest density, we may assume that the minors were obtained only through vertex deletions and edge-contractions. Additionally, due to the structure of $G_k(m)$ which can be seen as unions of cliques all sharing a common set of $k$ vertices, any vertex deletion is equivalent to some edge-contraction (with the exception of removing the last of the $k$ common vertices, but minors with the whole common set removed have density at most $1$ so we may ignore them). 
 
 Hence let $H$ be a minor of $G_k(m)$ that was obtained only through edge-contractions. As the order of contractions is irrelevant, assume we first perform all contractions of edges with at least one vertex in $Y$, i.e., edges not fully contained in the common set $X$. If $m<n/2$ then the density of $G_k(m)$ is less than $k$. The contractions we perform all remove one vertex and at least $k$ edges, hence the density can only go down. Next, we contract the edges in the common set $X$, but this can be seen as removing universal vertices, hence it also pulls the density down, proving that $H$ is sparser than $G_k(m)$ in this case.
 
 If $n/2 \leq m \leq n$ then the density of $G_k(m)$ is at most $k+1/2$. Recall that in this case, the graph induced by $Y$ is a union of disjoint triangles and a perfect matching on the remainder of $Y$. We again contract the edges with at least one end in $Y$ first. As there are no isolated vertices in the graph induced by $Y$, in order to have a contraction resulting with the deletion of only $k$ edges, we first have to remove $k+1$ edges once, or make two contractions which result in removing $k+2$ and $k+1$ edges respectively. Thus, on average, we remove at least $k+1/2$ edges per vertex. Contractions limited to $X$ can again be seen as removing universal vertices, thus again any minor $H$ obtained this way has to be at most as dense that $G_k(m)$. Hence the graphs $G_k(m)$ are minor-balanced.
 
For the graphs $F_k(m)$ we can argue similarly. This completes the proof of Theorem~\ref{thm:asymptoticallyDense}.
\end{proof}

Let $G$ be a strictly minor-balanced graph with density $\rho(G) = \beta$. We know that the class of graphs with every component being a minor of $G$ is a minor-closed class with maximum density $\beta$.
\begin{prop}
 For any $\beta \in B$ there is a finite number of strictly minor-balanced graphs with $\rho(G) = \beta$.
\end{prop}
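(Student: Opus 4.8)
The plan is to reduce the statement to the Robertson--Seymour graph minor theorem, which (as already used in the excerpt to prove that $B$ is countable and well-ordered) tells us that the minor relation on finite graphs is a well-quasi-order; in particular there is no infinite antichain. So it suffices to prove that the set $\mathcal{S}_\beta$ of isomorphism classes of strictly minor-balanced graphs $G$ with $\rho(G) = \beta$ is an antichain in the minor order.

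The key observation is almost immediate from the definition. Suppose $G, H \in \mathcal{S}_\beta$ with $G$ a minor of $H$. If $G$ is a \emph{proper} minor of $H$, then since $H$ is strictly minor-balanced we get $\rho(G) < \rho(H) = \beta$, contradicting $\rho(G) = \beta$. Hence $G$ is not a proper minor of $H$, and a graph which is a minor of $H$ but not a proper minor of $H$ is isomorphic to $H$. Thus no two distinct members of $\mathcal{S}_\beta$ are comparable in the minor order, i.e. $\mathcal{S}_\beta$ is an antichain. Applying the graph minor theorem to $\mathcal{S}_\beta$ then yields that $\mathcal{S}_\beta$ is finite, which is the assertion.

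There is no real obstacle here; the only point that needs a word of care is the routine fact that the minor relation, restricted to isomorphism classes of finite graphs, is a genuine partial order (so that ``antichain'' is meaningful and the graph minor theorem applies verbatim): if $G$ is a minor of $H$ and $H$ is a minor of $G$ then, since no minor operation increases the number of vertices, $v(G) = v(H)$, so only edge deletions are used in each direction, forcing $e(G) = e(H)$ and hence $G \cong H$. One could also remark in passing that the hypothesis $\beta \in B$ is not really needed for finiteness, since if $\mathcal{S}_\beta \neq \emptyset$ then any $G \in \mathcal{S}_\beta$ is in particular minor-balanced, so $\beta = \rho(G) \in A \subseteq B$ automatically; but including it does no harm.
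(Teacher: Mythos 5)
Your proof is correct and uses essentially the same idea as the paper: both invoke the Robertson--Seymour theorem and then observe that if one strictly minor-balanced graph of density $\beta$ were a proper minor of another, the strict density drop would contradict equality of densities. The paper phrases this directly as a contradiction from an infinite sequence (any infinite sequence of distinct graphs must, by well-quasi-ordering, contain a comparable pair $G_i$ minor of $G_j$ with $i<j$), whereas you phrase it as ``$\mathcal{S}_\beta$ is an antichain, and a WQO has no infinite antichain''---these are interchangeable formulations of the same argument. Your side remarks (antisymmetry of the minor order on isomorphism classes, and the dispensability of the hypothesis $\beta\in B$) are correct but are routine points the paper leaves implicit.
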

\begin{proof}
 Assume not and let $\{G_{n}\}_{n \geq 1}$ be a sequence of distinct strictly minor-balanced graphs with density $\beta$. Then, by the Robertson-Seymour theorem, there is some $1 \leq i < j$ such that $G_i$ is a minor of $G_j$. However, by the fact that $G_j$ is strictly balanced we then have
 \[
  \beta = \rho(G_i) < \rho(G_j) = \beta,
 \]
 a contradiction.
\end{proof}

\section{Complete characterisation of critical densities at most $2$}
\label{sec.upto2}

In this section we first prove Theorem \ref{thm:Bin[0,1]} (and a little more, see Proposition~\ref{prop:estar}); then we prove Theorem~\ref{thm:Bin[1,2]}, which requires most of the section; and finally, in the last subsection, we investigate to what extent, given $\beta \leq 2$, we can choose a minor-balanced graph $G$ achieving this density (that is, with $\rho(G)=\beta$) which is regular or near-regular. The equation~\eqref{eqn:[0,1)} is contained in Theorem 22 of Eppstein~\cite{densitiesMinorClosed}, but we prove it for completeness and since our proof is short.
 
\begin{proof}[Proof of Theorem \ref{thm:Bin[0,1]}]
Let $\beta_{\cA} <1$. Observe first that some star is not in $\cA$ and some path is not in $\cA$, for otherwise $\beta_\cA = \sup_{G \in \cA} \rho(G) \geq 1$. Hence there is only a finite number of possible connected (unlabelled) graphs in $\cA$. Clearly $\beta_{\cA}$ is the maximum value of $\rho(G)$ for connected graphs in $\cA$, achieved for some graph $G_0$ with $v(G_0)=t$ say. Then, as $G_0$ is connected we have $e(G_0) \geq t-1$, and as $\rho(G_0) < 1$, also $e(G_0) \leq t-1$. Hence we have $\beta_{\cA} = (t-1)/t$.
 
 Now, fix $t \geq 1$ and let $\cH_t= \{C_3,K_{1,3}, P_{t+1}\}$, so $\cA_t = \Ex(\cH_t)$ consists of the forests of paths each with at most $t$ vertices. Then $\beta_{\cA_t} = \frac{t-1}{t} \in A$ as intended.
\end{proof}
 
The equation~(\ref{eqn:[0,1)}) describes the restriction to $[0,1)$ of (the set $A$ and) the set $B$ of critical densities.  Recall from Theorem~\ref{thm:structureOfB} that $B$ is also the set of limiting densities. In this sparse region, we can describe the limiting behaviour of $e^*_{\cA}(n)$ rather precisely.

\begin{prop} \label{prop:estar}
Let $\cA$ be a minor-closed class of graphs, and suppose that $\liminf e^*_{\cA}(n) / n < 1$. Then there is a positive integer $t$ such that
\begin{equation} \label{eqn.lambdaH}
  e^*_{\cA}(n) = \tfrac{t-1}{t} n +O(1).
\end{equation}
Additionally, for each positive integer $t$ there is a minor-closed class $\cA$ such that \eqref{eqn.lambdaH} holds.
\end{prop}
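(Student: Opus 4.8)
The plan is to bound $e^*_\cA(n)$ directly, both above and below, by $\tfrac{t-1}{t}n+O(1)$ for a suitable positive integer $t$; the convergence of $e^*_\cA(n)/n$ (to $\lambda_\cA$) will not be needed as an input.

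First I would dispose of the case where $\cA$ is a finite class: then $\cA_n=\emptyset$ for all large $n$, so $e^*_\cA(n)=O(1)$ and \eqref{eqn.lambdaH} holds with $t=1$. So assume henceforth that $\cA$ contains graphs on arbitrarily many vertices; deleting all edges and then superfluous vertices from such a graph shows $nK_1\in\cA$ for every $n$. The first real step is to show that $\cA$ has only finitely many connected graphs up to isomorphism. If not, then, since a connected graph of bounded maximum degree that contains no long path has boundedly many vertices, the connected members of $\cA$ must have either unbounded maximum degree or unbounded longest-path length. In the first case $K_{1,s}$ is a subgraph, hence a minor, of some connected member of $\cA$ for every $s$, so $K_{1,s}\in\cA$ for all $s$ and therefore $e^*_\cA(n)\ge e(K_{1,n-1})=n-1$ for every $n$; in the second case likewise $P_n\in\cA$, so again $e^*_\cA(n)\ge n-1$. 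Either way $\liminf_n e^*_\cA(n)/n\ge 1$, contradicting the hypothesis.

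Now let $G_1,\dots,G_p$ be the connected graphs of $\cA$, and call $G_i$ \emph{repeatable} if the disjoint union of $n$ copies of $G_i$ lies in $\cA$ for every $n$ (equivalently, for infinitely many $n$, by deleting copies). Since $K_1$ is repeatable, there is a repeatable $G_i$ of maximum density; call it $R$ and put $t:=v(R)$ and $\rho^{*}:=\rho(R)$. For the upper bound, take $G\in\cA_n$ with $e(G)=e^*_\cA(n)$: each component of $G$ is one of the $G_i$, and each non-repeatable $G_i$ occurs in $G$ at most $q_i:=\max\{q:qG_i\in\cA\}<\infty$ times, since that many disjoint copies form a subgraph --- hence a minor --- of $G$. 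The repeatable components of $G$ together span at most $n$ vertices and each has density at most $\rho^{*}$, so $e(G)\le \rho^{*}n+C$, where $C:=\sum q_i\,e(G_i)$, summed over the non-repeatable $G_i$, depends only on $\cA$. For the lower bound, write $n=qt+r$ with $0\le r<t$: since $(q+r)R\in\cA$ and reducing $r$ of its copies of $R$ to single vertices (deleting $t-1$ vertices from each) produces $qR\cup rK_1\in\cA_n$, we get $e^*_\cA(n)\ge q\,e(R)\ge \rho^{*}n-e(R)$. Hence $e^*_\cA(n)=\rho^{*}n+O(1)$, and then $\rho^{*}<1$ by the hypothesis; since $R$ is connected this forces $R$ to be a tree, so $e(R)=t-1$, $\rho^{*}=\tfrac{t-1}{t}$, and \eqref{eqn.lambdaH} follows. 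For the converse, for each $t$ the class $\cA=\Ex(\{C_3,K_{1,3},P_{t+1}\})$ of forests of paths with at most $t$ vertices each (as in the proof of Theorem~\ref{thm:Bin[0,1]}) does the job: a densest member on $n$ vertices is a disjoint union of $\floor{n/t}$ copies of $P_t$ and one shorter path, and so has $\tfrac{t-1}{t}n+O(1)$ edges.

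The main obstacle is the upper bound when $\cA$ is not decomposable. A connected graph of $\cA$ may have density close to, or equal to, $1$; but any such dense graph is necessarily non-repeatable, hence can appear as a component only boundedly often, which is exactly what confines its contribution to the additive $O(1)$ term. Isolating this through the notion of a repeatable component --- and noting that the maximum density over repeatable connected graphs is attained at a tree, which is what produces the value $\tfrac{t-1}{t}$ --- is the crux of the argument; the remaining estimates are routine.
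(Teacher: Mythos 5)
Your proof is correct and follows essentially the same route as the paper's: the paper introduces the same notion (the set $\cC$ of connected graphs whose arbitrary disjoint unions stay in $\cA$, matching your ``repeatable'' graphs), takes the one of maximum density $q$ on $t$ vertices, and asserts $e^*_{\cA}(n)=qn+O(1)$ before concluding $q=\tfrac{t-1}{t}$. The only difference is that you supply the details the paper compresses into that one assertion — the upper bound via ``non-repeatable components appear boundedly often, hence contribute only $O(1)$ edges'' and the lower bound via $qR\cup rK_1$ — and you fold in the case $\cC=\emptyset$ by including $K_1$ among the repeatable graphs, which the paper handles as a separate clause.
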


\begin{proof}
Suppose that $\liminf e^*_{\cA}(n)/n <1$.  Then, arguing as before, there is a finite set of connected (unlabelled) graphs in $\cA$. Let $\cC$ be the set of connected graphs $C$ on at least two vertices such that the disjoint union of $k$ copies of $C$ is in $\cA$ for each positive integer $k$. If $\cC$ is empty then $e^*_{\cA}(n) = O(1)$, so suppose not, and let $q = \max \{ \rho(C): C \in \cC\}$, which is achieved for some $C^*$ with $v(C^*)=t$. Then $e^*_{\cA}(n) = qn +O(1)$. Thus we again see that $e(C^*)<t$, which implies that $e(C^*)=t-1$. Hence $e^*_{\cA}(n) = \tfrac{t-1}{t} n +O(1)$.

The family $\cH_t = \{C_3,K_{1,3}, P_{t+1}\}$ again shows that for any $t \geq 1$, there is a family $\cA_t=\Ex(\cH_t)$ for which \eqref{eqn.lambdaH} holds.
\end{proof}

\subsection{Proof of Theorem \ref{thm:Bin[1,2]}}

To prove Theorem \ref{thm:Bin[1,2]} we first introduce a family of graphs we call $2$-plants. The name of the family should indicate that our graphs are related to $2$-trees but are more general.

We say that a graph $G$ with $v(G) = k \geq 2$ is a \emph{$2$-plant} if there exists an ordering of its vertices $v_1,v_2,\ldots,v_k$ for which the following two conditions are satisfied:
\begin{enumerate}
 \item $\{v_1,v_2\} \in E(G)$,
 \item for all $3 \leq i \leq k$ we have $|N(v_i) \cap \{v_1,v_2,\ldots,v_{i-1}\}| \geq 2$.
\end{enumerate}
If $G$ is a $2$-plant as above, we say that $G$ is a $2^+$-plant if for at least one value of $i$ in the second condition above the inequality is strict, and otherwise we say that $G$ is a $2^=$-plant. Observe that a $2^+$-plant must contain at least $4$ vertices, with $K_4$ being the unique $4$-vertex $2^+$-plant.

Of course, every $2^+$-plant or $2^=$-plant is also a $2$-plant. Clearly every $2$-tree is a $2^=$-plant, but it is also clear that $2^=$-plants are a strictly broader class of graphs than $2$-trees, as we do not require the two neighbours of $v_i$ in $\{v_1,v_2,\ldots,v_{i-1}\}$ to be adjacent (for example, the 5-vertex graph obtained by subdividing one edge of $K_4$ once is a $2^=$-plant but not a 2-tree.) Consequently, even though each $2$-tree is strictly minor-balanced, a $2^=$-plant need not be minor-balanced (as shown again by the example of the graph obtained from $K_4$ by subdividing one edge once), and similarly a $2^+$-plant need not be minor-balanced.

The following lemma will be a crucial tool in the proof of Theorem \ref{thm:Bin[1,2]}.

\begin{lemma}
\label{lem:k2treesEverywhere}
Let $m \geq 0$, $k \geq 2$ and $0 \leq t \leq k-1$. Let $G$ be a connected graph on $n = mk+1+t$ vertices, in which every edge is contained in a $(k+1)$-vertex $2^=$-plant or in some $2^+$-plant.  Then,
\begin{equation} \label{eqn:k2treesEverywhere}
e(G) \; \geq \; 2n-2-m \; = \; \left ( 2 - \frac{1}{k} \right ) n + \frac{t+1}{k} -2.
\end{equation}
This bound is sharp, and indeed for each integer $\ell$ with
\[
2n-2-m \leq \ell \leq \binom{n}{2}
\]
there is a connected graph $G$ with $n$ vertices and $\ell$ edges such that each edge is in a $(k+1)$-vertex $2^=$-plant or in some $2^+$-plant. Further, in the case $\ell = 2n-2-m$, we may choose $G$ so that also it is strictly minor-balanced.
\end{lemma}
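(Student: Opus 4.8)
The plan is to prove the lower bound \eqref{eqn:k2treesEverywhere} by a careful induction that exploits the construction order of $2$-plants, and then to handle sharpness and the minor-balancedness claim separately by explicit constructions.

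\textbf{The lower bound.} I would argue by induction on $n$ (equivalently on $m$), the base cases being $n \leq k+1$ (i.e.\ $m \leq 1$), where the hypothesis forces $G$ itself to be essentially a $(k+1)$-vertex $2^=$-plant or to contain a $2^+$-plant, so $e(G) \geq 2n-3 \geq 2n-2-m$ can be checked directly. For the inductive step, the idea is that ``peeling'' works well only for edges lying in a $(k+1)$-vertex $2^=$-plant, which contributes exactly $2(k+1)-3 = 2k-1$ edges on $k+1$ vertices, i.e.\ density just under $2-\tfrac1k$, matching the target ratio; the presence of any $2^+$-plant only helps. Concretely, I would fix a spanning structure: since every edge is in some plant and $G$ is connected, one can build $G$ up as a union of plants glued along shared vertices/edges, and I would want to identify a ``leaf'' plant $P$ — a $(k+1)$-vertex $2^=$-plant (or a small $2^+$-plant) — meeting the rest of $G$ in a bounded set, remove the $\leq k-1$ private vertices of $P$ together with their $\geq 2k-1$ incident edges (at least $2$ per removed vertex plus the ``free'' edge), check that the remaining graph still satisfies the hypothesis on $n' = n-(k-1) \geq $ (hmm, careful: removing $k-1$ vertices changes $t$ but keeps the residue structure modulo $k$), and apply induction to get $e(G) \geq e(G') + (2k-1) \geq (2n'-2-(m-1)) + (2k-1) = 2n-2-m$. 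Making the bookkeeping of $t$ and the ``$+1$ free edge'' precise across the recursion is where the care is needed; the second equality in \eqref{eqn:k2treesEverywhere} is just algebra substituting $n = mk+1+t$.

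\textbf{Sharpness and the edge-count range.} For the upper endpoint $\ell = \binom n2$ take $G = K_n$ (every edge of $K_n$ is in a $K_4$, hence in a $2^+$-plant, since $n \geq k+1 \geq 3$; for $n=3$ adjust). For the lower endpoint $\ell = 2n-2-m$, build $G$ from $m$ copies of a $(k+1)$-vertex $2^=$-plant arranged in a ``path'' fashion so that consecutive copies share exactly one edge (two vertices) and the structure is connected with $v = m(k+1) - (m-1) = mk+1$ vertices; then attach a path or a small plant on the remaining $t$ vertices so that those $t$ edges also lie in $(k+1)$-vertex $2^=$-plants — one can instead take the last plant to have $k+1+t$ vertices absorbing the extra $t$ vertices as a $2^=$-plant, which still has exactly $2(k+1+t)-3$ edges. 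Counting gives exactly $2n-2-m$. To fill in intermediate values of $\ell$, add edges one at a time inside existing plants (e.g.\ completing a $2^=$-plant towards a clique, each added edge creating a $K_4$ hence lying in a $2^+$-plant, and not destroying the property for older edges); this monotonically increases $e(G)$ by one each step from $2n-2-m$ up to $\binom n2$, giving every integer in the range.

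\textbf{Strict minor-balancedness in the extremal case.} Here I would take the specific extremal $G$ above (a chain of $m$ many $(k+1)$-vertex $2^=$-plants, with the final plant enlarged to absorb the $t$ extra vertices) and choose each $2^=$-plant to itself be strictly minor-balanced — recall each $2$-tree is strictly minor-balanced, and a $2$-tree on $k+1$ vertices has exactly $2(k+1)-3$ edges, the right count. One then checks that a ``tree-like'' gluing of strictly minor-balanced pieces along edges, with the global density exactly equal to $2-\tfrac1k + \tfrac{t+1}{k}-2$ over $n$, is again strictly minor-balanced: any proper minor either lives inside one block (strictly sparser by the block's property, diluted by the untouched rest which has density $\leq$ that of $G$) or disconnects/shrinks the chain, strictly dropping the ratio since each block sits right at the threshold density and removing any vertex loses at least two edges. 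I expect \textbf{this last verification to be the main obstacle}: one must rule out that some clever contraction across a shared edge of two blocks produces something as dense as $G$, which requires knowing the shared edges are never the ``bottleneck'' — this should follow because each block individually is strictly minor-balanced and the gluing edges are genuine edges of $G$ (not contractions), but pinning down the density arithmetic for an arbitrary minor needs the observation that $G$'s density is a weighted average of block densities all at most $\rho(G)$, with strict inequality as soon as any block is touched.
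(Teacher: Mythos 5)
Your lower bound argument has a genuine gap: you propose a backward ``peel a leaf plant'' induction, but such a leaf need not exist — the hypothesis only says every edge lies in \emph{some} plant, and these plants can overlap arbitrarily, so there is no reason to expect a plant meeting the rest of $G$ in only one vertex. Worse, even if you do find and delete such a plant, an edge of the remaining graph whose only covering plant used some of the deleted vertices will violate the hypothesis, so the inductive hypothesis does not apply to $G'$. (Your bookkeeping is also off: a $(k+1)$-vertex $2^=$-plant attached at one vertex has $k$ private vertices, not $k-1$, and that is what makes $2n'-2-(m-1)+(2k-1)=2n-2-m$ with $n'=n-k$.) The paper avoids all of this by running the exploration \emph{forward}: grow $W_j$ from a single vertex by repeatedly adjoining the vertex set of a plant through a boundary edge, and show — via a careful case analysis on the positions of the first two ``old'' vertices in the plant's ordering when $|U\cap W_{j-1}|\geq 2$ — that every step adds at least two edges per new vertex except the ``sparse'' step of gluing a fresh $(k+1)$-vertex $2^=$-plant at a single old vertex, which adds $k$ vertices and $2k-1$ edges. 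Since at most $m$ such sparse steps fit in $n=mk+1+t$ vertices, the bound follows. Nothing is ever deleted, so the hypothesis is never endangered.

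For sharpness your plan is close to the paper's (a tree-like gluing of $(k+1)$-vertex $2^=$-plants at shared vertices), but you say the consecutive copies ``share exactly one edge (two vertices)'' while using the vertex count $m(k+1)-(m-1)=mk+1$, which is the count for sharing a single \emph{vertex}; the two are inconsistent and only vertex-sharing gives the right $n$. Your proposal to absorb the $t$ spare vertices by enlarging the last plant to a $(k+1+t)$-vertex $2^=$-plant also needs justification: it is not clear that every edge of an arbitrary $(k+1+t)$-vertex $2^=$-plant lies in some $(k+1)$-vertex $2^=$-plant. The paper instead joins each of the $t$ extra vertices to the same two vertices (the two earlier neighbours of $v_{k+1}$ in one petal), which makes the plant-covering property manifest, and chooses every petal to be $K^+_{2,k-1}$. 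That specific choice is what makes the strict-minor-balancedness verification concrete: contracting anything except a whole attached copy of $K^+_{2,k-1}$ removes at least two edges per vertex, hence sparsifies since $\rho(G)<2$, and contracting a whole $K^+_{2,k-1}$ removes $k$ vertices and $2k-1$ edges — a ratio $2-\tfrac1k$ which strictly exceeds $\rho(G)$ — so it also sparsifies. Your vaguer ``weighted average of block densities'' sketch is the part you flag yourself as an obstacle, and indeed it is: without pinning down the petals and the arithmetic, you cannot rule out that some cross-block contraction keeps the density.
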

\begin{proof}
Let $W_0 = \{v\}$ for some arbitrary vertex $v$ of $G$, and let $G_0$ be the trivial graph on $W_0$. For $j=1,2,\ldots$ we shall construct a set $W_j$ of vertices, with corresponding induced subgraph $G_j$ of $G$. If $|W_{j-1}| < n$, we obtain $W_j$ from $W_{j-1}$ using the following procedure. Take an arbitrary edge $e$ of $G$ with exactly one vertex in $W_{j-1}$ (such an edge can always be found since $G$ is connected). Let $U$ be the vertex set of an arbitrary copy of a $(k+1)$-vertex $2^=$-plant or some $2^+$-plant to which $e$ belongs, and set $W_j = W_{j-1} \cup U$.

It is clear that if $|U \cap W_{j-1}| = 1$, $U$ spans a $(k+1)$-vertex $2^=$-plant, and there are no `extra' edges between $U \sm W_{j-1}$ and $W_{j-1} \sm U$, then as we form $G_{j}$ from $G_{j-1}$ we increase the number of vertices by~$k$, and the number of edges by exactly $2k-1$. We \emph{claim} that this is the only scenario which allows us to possibly extend to $G_j$ in such a "sparse" way, with all other scenarios leading us to adding at least two edges per vertex.

Suppose first that $|U \cap W_{j-1}|=1$.  Then the claim is immediate if there are `extra' edges between $U \setminus W_{j-1}$ and $W_{j-1} \setminus U$; and similarly the claim is immediate if $U$ spans a $2^+$-plant, since by the definition of $2^+$-plants, for some $i \geq 4$ we are adding $i-1$ vertices and at least $2i-2$ edges to the graph $G_{j}$.

Hence assume that $|U \cap W_{j-1}| \geq 2$ and let $U = \{ v_1,v_2,\ldots,v_i \}$ (where possibly $i=k+1$), as in the definition of $i$-vertex $2$-plants. The argument below holds both when the plant on $U$ is a $2^=$-plant and when it is a $2^+$-plant. We think of adding the vertices of $U$ to $W_{j-1}$ one by one, in the order $v_1,v_2,\ldots$; and we say that the number of new edges $v_a$ \emph{contributes} to $G_t$ is the number of edges $\{v_b, v_a\}$ with $b<a$ which are not already in the graph constructed (that is, such that at most one of $v_a$, $v_b$ is in $W_{j-1} \cup \{v_1,\ldots,v_{a-1}\}$). We shall show that the total number of edges contributed is at least $2 \, |U \sm W_{j-1}|$, which will establish the claim.

Let $s_1 < s_2$ be the two smallest indices of the vertices of $U$ contained in $W_{j-1}$. If $s_1 = 1$, $s_2 = 2$ then all vertices in $U \setminus W_{j-1}$ contribute at least two new edges by the definition of a $2$-plant. If $s_1 = 1$, $s_2 > 2$ then $v_2 \in U \sm W_{j-1}$ contributes only one new edge $\{v_1,v_2\}$. However, in this case $v_{s_2} \in W_{j-1}$ also contributes at least one new edge, as the only edge $\{v_b, v_{s_2}\}$ with $b<s_2$ that could already be in the graph is the possible edge $\{ v_1, v_{s_2} \}$. Hence we again contribute at least two edges per vertex.

If $s_1 = 2$, then $v_1 \in U \setminus W_{j-1}$ contributes no new edges. However, $v_2 \in W_{j-1}$ contributes one new edge $\{ v_1, v_2 \}$, and the same applies to $v_{s_2} \in W_{j-1}$ (since the only edge $\{v_b, v_{s_2}\}$ with $b< s_2$ that could already be in the graph is the possible edge $\{ v_2, v_{s_2} \}$), so the claim holds in this case.

Finally, if $s_1 > 2$ then $v_1, v_2 \in U \setminus W_{j-1}$ together contribute only one new edge to $G_t$. However, $v_{s_1}$ must contribute at least 2 new edges, and $v_{s_2}$ must contribute at least one: hence, the total count again gives at least twice as many edges as vertices, completing the proof of the claim.
\smallskip

Using the above procedure we recover the whole of $G$. Thus the bound in \eqref{eqn:k2treesEverywhere} follows immediately by observing that we start with one vertex, and add at least 2 edges per vertex, except in a special move when we add exactly $k$ vertices and exactly one less edge.  But we can do this at most $m$ times, so
\[
e(G) \geq 2(n-1) -m,
\]
as in \eqref{eqn:k2treesEverywhere}.

To show that the bounds are tight, we take a union of $m$ disjoint $(k+1)$-vertex $2^=$-plants, and merge them into one connected graph by choosing one vertex in every plant and identifying the chosen vertices into one vertex. We then take the remaining $t$ vertices, choose one of the $2^=$-plants and join all $t$ vertices to the two neighbours of $v_{k+1}$ in that plant. In the resulting graph every edge is in a $(k+1)$-vertex $2^=$-plant and it is clearly sharp for inequality \eqref{eqn:k2treesEverywhere}.

We can now add edges one by one to the star of plants described above, completing the individual $(k+1)$-vertex $2^=$-plants into cliques (in the case of the plant with the vertex $v_{k+1}$ copied $t$ times, we first complete the original $2^=$-plant, then join all $t$ additional vertices to the original $k+1$ vertices, and then complete the clique on $k+1+t$ vertices). Throughout this procedure the graph clearly keeps the property that all edges are contained in a $(k+1)$-vertex $2^=$-plant or in some $2^+$-plant. Once we have completed a star of cliques, we add the remaining edges in an arbitrary order. Whenever we add an edge $e$, we can treat one of its end-vertices as $v_{k+1}$, the centre of the star as $v_{k}$, and the rest of the plant is then contained in the clique containing the other end-vertex of $e$.

It remains only to consider further the case when $\ell$ is as small as possible, that is $\ell=2n-2-m$.  Recall first that, for $a \geq 1$, $K^+_{2,a}$ is the graph obtained from the complete bipartite graph $K_{2,a}$ by connecting the vertices in the part of size $2$ by an edge: it is a 2-tree, and so it is strictly balanced. All we need to do is to take the $m$ initial $(k+1)$-vertex $2^=$-plants to be copies of $K^+_{2,k-1}$ (where we list the adjacent vertices in the part of size 2 as $v_1, v_2$).  If $t \geq 1$ then the copy of $K^+_{2,k-1}$ to which we add the $t$ extra vertices becomes a copy of $K^+_{2,k-1+t}$. To see that the graph $G$ constructed is strictly balanced, let us think of it as a $K^+_{2,k-1+t}$ with copies of $K^+_{2,k-1}$ attached to it by identifying vertices. Now, observe that the density of $K^+_{2,k-1+t}$ is at most
\[
 \frac{2(2k-2)+1}{2k} = 2-\frac{3}{2k},
\]
while contracting $K^+_{2,k-1}$ to a vertex removes $2k-1$ edges and $k$ vertices. Since only contracting a whole $K^+_{2,k-1}$ allows us to remove less than twice as many edges as vertices, and also
\[
 \frac{2k-1}{k} = 2-\frac{2}{2k} > 2-\frac{3}{2k},
\]
we see that any proper minor of $G$ is strictly sparser than $G$. This completes the proof of the lemma.
\end{proof}

We are now ready to prove Theorem \ref{thm:Bin[1,2]}.
\begin{proofOfTheorem}{thm:Bin[1,2]}
First, we observe that for each $k \geq 2$, we obtain $\beta_\cA = 2-\tfrac{1}{k-1}$ by taking $\cA$ to be the family of graphs where every component is a minor of $K^+_{2,3k-5}$ since it is a (strictly) minor-balanced graph on $3k-3$ vertices and $6k-9$ edges. Hence we can focus on densities in the open intervals $(2-\tfrac{1}{k-1}, 2-\tfrac1k)$ for $k \geq 2$.

Thus suppose that $2-\tfrac{1}{k-1} < \beta_{\cA} < 2-\tfrac1k$.  Let $0< \eps<\beta_{\cA}-(2-\tfrac{1}{k-1})$.  There is a connected minor-balanced graph $G \in \cA$ with $\rho(G) > \beta_{\cA}-\eps>2-\tfrac{1}{k-1}$. Assume that there is an edge $e$ in $G$ not in any $(k+1)$-vertex $2^=$-plant nor in any $2^+$-plant. Let $j^*$ be the maximum $j$ such that $e$ is in a $j$-vertex $2^=$-plant, so $2 \leq j^* \leq k$; and let $T$ be the unique $j^*$-vertex $2^=$-plant containing $e$. Then, since no vertex outside $T$ has more than one neighbour in $T$, the minor $G'$ of $G$ obtained by contracting $T$ to a vertex satisfies $e(G')=e(G)-(2j^*-3)$ and $v(G')=v(G)-(j^*-1)$. But we \emph{claim} that
\begin{equation}
\label{eq:minorsClaim}
\rho(G') = \frac{e(G)-(2j^*-3)}{v(G)-(j^*-1)} \geq \frac{e(G)-(2k-3)}{v(G)-(k-1)} > \frac{e(G)}{v(G)} = \rho(G),
\end{equation}
and it follows that we may assume that in $G$ each edge is in a $(k+1)$-vertex $2^=$-plant or in a $2^+$-plant.

Let us prove the claim \eqref{eq:minorsClaim}. We know that $2-\frac1{k-1}< \frac{e(G)}{v(G)} < 2-\frac1{k}$.  If $v(G) \leq k-1$ then $e(G)>2v(G)-\frac{v(G)}{k-1} \geq 2v(G)-1$, so $e(G) \geq 2v(G)$. But this contradicts $\frac{e(G)}{v(G)} <2-\frac1{k}$ so $v(G) \geq k$; and hence $e(G)< 2v(G) - \frac{v(G)}{k} \leq 2v(G)-1$, that is $e(G)-2v(G)+1<0$. Therefore for $f(t)= \frac{e(G)-2t+3}{v(G)-t+1}$ we have $f'(t) < 0$ whenever
\[
 -2(v(G)-t+1)+e(G)-2t+3 = e(G)-2v(G)+1 < 0,
\]
which we know must hold. Thus the first inequality in \eqref{eq:minorsClaim} is proved.

For the second one, observe that as $\frac{e(G)}{v(G)} = \alpha > 2-\frac1{k-1} = \frac{2k-3}{k-1}$, we have
\[
 \frac{e(G)-(2k-3)}{v(G)-(k-1)} > \frac{e(G)-\alpha(k-1)}{v(G)-(k-1)} = \frac{\alpha(v(G)-(k-1))}{v(G)-(k-1)} = \alpha = \frac{e(G)}{v(G)},
\]
completing the proof of \eqref{eq:minorsClaim}.

By Lemma \ref{lem:k2treesEverywhere} we know that we must have
\[
 \frac{e(G)}{n} = 2-\frac{1}{k} + \frac{t+1}{kn} - \frac{2}{n} + \frac{s}{n}
\]
for some integer $s \geq 0$. However, we have $e(G)/n \leq 2-\tfrac{1}{k}$ if and only if $s \in \{0,1\}$. Since $2-\tfrac{1}{k}$ is the only accumulation point of the set of such values of $e(G)/n$, we know that these are the only possible values of $\beta_\cA \in (2-\frac{1}{k-1},2-\tfrac{1}{k})$.

By the last part of Lemma~\ref{lem:k2treesEverywhere}, each density with $s=0$ is achievable as $e(G)/v(G)$ for a (strictly) balanced graph $G$. Thus the class $\cA$ of graphs in which each component is a minor of $G$ satisfies $\beta_{\cA}= e(G)/v(G)$ as required.

Next, observe that the lower bound on $n$ in \eqref{eqn:betaIntervalBoundOnN} is exactly the requirement that
\[
 \frac{2}{n} - \frac{t+1}{kn} < \frac{1}{k-1} - \frac{1}{k} = \frac{1}{k(k-1)}.
\]
Thus, if $s=0$ and $e(G)/n>2- \frac1{k-1}$, then $n$ must satisfy~\eqref{eqn:betaIntervalBoundOnN}.

It remains only to show that the densities obtained with $s=1$ can also be obtained with $s=0$ (for some other value of $n$). First, observe that when $t = k-1$ and $s=1$ then we obtain $e(G)/n = 2-1/k$. Hence we can assume that $0 \leq t \leq k-2$.

Let $n_1 = m_1 k+1+t_1$, for $s=1$ giving us
\[
\frac{t_1 +1}{kn_1} - \frac1{n_1} = \frac{t_1+1-k}{k (m_1 k+1+t_1)}.
\]
We want to show that there exists $n_2 = m_2k+1+t_2$, with $0 \leq t_2 \leq k-1$, such that 
\[
 \frac{t_1+1-k}{k (m_1 k+1+t_1)} = \frac{t_2 +1}{kn_2} - \frac2{n_2} = \frac{t_2+1-2k}{k (m_2 k+1+t_2)}.
\]
Solving the above equality for $t_2$ we obtain
\[
\begin{split}
 t_2 & = \frac{m_1(2k-1)+2t_1+1+m_2(1+t_1-k)}{m_1+1} \\
     & = 2k-1 + \frac{2+2t_1-2k+m_2(1+t_1-k)}{m_1+1} \\
     & = 2k-1 - \frac{(m_2+2)(k-1-t_1)}{m_1+1}.
\end{split}
\]
Let us take $m_2 = \ell (m_1+1)-2$, so that we have $t_2 = 2k-1 - \ell(k-1-t_1)$, for some integer $\ell$ to be determined. Since we assume $0 \leq t_1 \leq k-2$, we have $1 \leq k-1-t_1 \leq k-1$. Thus there is some $\ell' \geq 2$ such that 
\[
 0 \leq t_2' = 2k-1 - \ell'(k-1-t_1) \leq k-1.
\]
Thus $G_2$, the `star of $2^=$-plants' on $n_2 = (\ell' (m_1+1)-2) k+1+t_2'$ vertices, gives us the desired density $e(G_2)/n_2$ with $s = 0$. (The lower bound~\eqref{eqn:betaIntervalBoundOnN} must hold for $n_2$ since we have not changed the density.) This shows that we need not consider $s=1$ further, and thus completes the proof of the theorem.
\end{proofOfTheorem}
\begin{remark}
Given $k \geq 2$, when $n$ does not satisfy \eqref{eqn:betaIntervalBoundOnN} then we obtain a value of $\beta_\cA$ in one of the preceding intervals $(2-\tfrac{1}{i-1}, 2-\tfrac1i)$ for some $i < k$. This value is obviously achievable, but is already covered by the case $k = i$ of the theorem.
\end{remark}

\begin{remark}
 \label{rem:lowestAbove1}
 Theorem \ref{thm:Bin[1,2]} implies that the least possible values of $\beta_\cA$ larger than $1$ are $\tfrac65 < \tfrac54 < \tfrac97 < \tfrac43 < \dots$. Indeed, taking $k=2$, by \eqref{eqn:minorDensity} we want to maximize the value of $(3-t)/2n$ for $n$ satisfying \eqref{eqn:betaIntervalBoundOnN}, which for $k=2$ is equivalent to the condition that $n \geq 4$. Then, for $n$ odd, which implies $t=0$, the lowest values of $\beta_\cA$ we obtain are $\tfrac32-\tfrac{3}{10} = \tfrac65$ for $n=5$, $\tfrac32-\tfrac{3}{14} = \tfrac97$ for $n=7$, and $\tfrac32-\tfrac{3}{18} = \tfrac43$ for $n=9$. For $n$ even, which gives $t=1$, we obtain the values $\tfrac32-\tfrac28 = \tfrac54$ for $n=4$, and $\tfrac32-\tfrac{2}{12} = \tfrac43$ for $n=6$.
\end{remark}

\subsection{Near regularity for minor-balanced graphs}
\label{rem.near-reg}

Let us continue to consider critical densities $\beta \leq 2$.  For each such $\beta$ we know from Theorems~\ref{thm:Bin[0,1]} and~\ref{thm:Bin[1,2]} that there is a minor-balanced graph $G$ with $\rho(G)=\beta$, and so with average degree $2\beta$.  Can we insist that $G$ is regular or nearly regular?

Note that for any graph $G$
\[
 \delta(G) \leq \lfloor 2 \rho(G) \rfloor \leq \lceil 2 \rho(G)  \rceil \leq \Delta(G).
\]
If $2 \beta$ is an integer $t$ then we may hope to find a regular minor-balanced graph $G$ with density $\beta$; and indeed this is easy -- just take $G$ to be $K_{t+1}$. What can we say in the other cases, when $G$ cannot be regular?  We shall see that we can always insist that $\Delta - \delta \leq 2$ but not necessarily that $\Delta - \delta =1$. Recall that a \emph{block} in a graph is a maximal 2-connected subgraph, or a cut-edge (bridge) together with its end vertices, or an isolated vertex.

\begin{itemize}
\item For $\frac12 < \beta < 1$ we must have $\delta =1$, and we may always take $G$ as a path, with $\delta=1$ and $\Delta=2$.

\item For $1 < \beta < \frac32$ we must have $\delta =2$ and $\Delta \geq 3$.  For $\beta=\frac54$ we may take $G$ as the diamond $D$ with $\Delta=3$: for other values $\beta$ we must have $\Delta \geq 4$ (since the only graphs with $\Delta \leq 3$ in which each edge is in a triangle are $C_3$, $D$ and $K_4$) and we can always take $G$ with $\Delta=4$ (indeed we can insist that each block of $G$ is a 2-tree, by the proof of Lemma~\ref{lem:k2treesEverywhere}).

\item For $\frac32 < \beta < 2$, we must have $\delta =2$ or $3$ and $\Delta \geq 4$.  We can always take $G$ with $\Delta=4$, as above. Sometimes we can achieve $\delta=3$ (for example if $\beta=\frac95$ and $G$ is $K_5$ less an edge), but not always: Proposition~\ref{lem.20/13} below shows that with $\beta=\frac{20}{13}$ we must have $\delta=2$. 
\end{itemize}
\smallskip
\begin{prop}
\label{lem.20/13}
Each connected minor-balanced graph $G$ with $\rho(G) = \frac{20}{13}$ consists of 4 blocks each of which is a diamond $D$, and in particular $\delta(G)$=2. 
\end{prop}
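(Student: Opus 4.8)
The plan is to use the tightness of Lemma~\ref{lem:k2treesEverywhere} to pin down the structure of $G$. Note first that $\tfrac{20}{13}$ lies in the interval $\big(2-\tfrac1{k-1},\,2-\tfrac1k\big)=\big(\tfrac32,\tfrac53\big)$ with $k=3$, so we are in the range of Theorem~\ref{thm:Bin[1,2]}. Since $G$ is connected and minor-balanced with $\rho(G)=\tfrac{20}{13}\in\big(\tfrac32,\tfrac53\big)$, the argument establishing claim~\eqref{eq:minorsClaim} in the proof of Theorem~\ref{thm:Bin[1,2]} applies verbatim with $k=3$: if some edge of $G$ lay in no $4$-vertex $2^=$-plant and in no $2^+$-plant, contracting the maximal $2^=$-plant through that edge would produce a strictly denser minor, contradicting minor-balance. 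Hence every edge of $G$ lies in a $4$-vertex $2^=$-plant or in a $2^+$-plant; and the $4$-vertex $2^=$-plant is, up to isomorphism, exactly the diamond $D$.

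Next I would fix the size of $G$. Writing $v=v(G)$ and $e=e(G)$, the identity $13e=20v$ together with $\gcd(13,20)=1$ forces $13\mid v$, so $v\ge 13$. Applying Lemma~\ref{lem:k2treesEverywhere} with $k=3$ (and $m,t$ the integers with $v=3m+1+t$, $0\le t\le 2$, so $m\le (v-1)/3$) gives $e\ge 2v-2-m\ge 2v-2-\tfrac{v-1}{3}$. Substituting $e=\tfrac{20}{13}v$ and clearing denominators yields $60v\ge 65v-65$, i.e.\ $v\le 13$. Therefore $v=13$, $e=20$, $m=4$, $t=0$, and the bound of Lemma~\ref{lem:k2treesEverywhere} holds with equality: $e(G)=2\cdot 13-2-4=20$.

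The heart of the argument is then to read off the structure of $G$ from this equality. In the proof of Lemma~\ref{lem:k2treesEverywhere}, $G$ is built from a single vertex by a sequence of moves, each absorbing the vertex set of a $(k+1)$-vertex $2^=$-plant or a $2^+$-plant; every move adds at least twice as many edges as vertices, except for \emph{sparse} moves, each of which adds exactly $k=3$ new vertices and exactly $2k-1=5$ new edges and consists of attaching a fresh copy of $D$ meeting the current graph in exactly one vertex, with no further edges between them. The number of sparse moves is at most $m=4$, so the equality $e(G)=2\cdot 12-4$ forces exactly four sparse moves and no others; since four sparse moves already account for all $12$ non-initial vertices, every move is of this kind. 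Hence $G$ is obtained from a single vertex by attaching four copies of $D$, each at one vertex, and (since each such copy meets the rest of $G$ only at a cut vertex) the blocks of $G$ are exactly these four diamonds. Finally, $\delta(G)\ge 2$ since every vertex of $D$ has degree at least $2$ and identifying vertices only raises degrees; and for a leaf block $D_0$ of $G$, with unique cut vertex $x$, at least one of the three vertices of $D_0\sm\{x\}$ has degree $2$ in $D_0$ (a diamond has only two vertices of degree $3$), and such a vertex has degree $2$ in $G$, so $\delta(G)=2$.

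I expect the main obstacle to be the last step: one must verify carefully that equality in~\eqref{eqn:k2treesEverywhere} leaves no slack anywhere in the construction from the proof of Lemma~\ref{lem:k2treesEverywhere} — so that each of the four moves is forced to be the clean attachment of a copy of $D$ at a single vertex, with no extra incident edges — and then that the four diamonds obtained are genuinely the blocks of $G$. The other steps are routine given Theorem~\ref{thm:Bin[1,2]} and Lemma~\ref{lem:k2treesEverywhere}.
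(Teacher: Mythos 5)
Your proposal is correct and follows essentially the same route as the paper: both use the claim from the proof of Theorem~\ref{thm:Bin[1,2]} to conclude every edge lies in a diamond or a $2^+$-plant, then apply Lemma~\ref{lem:k2treesEverywhere} together with $13 \mid n$ to pin down $n = 13$, $e = 20$, $m = 4$, $t = 0$, and finally read off from equality in the exploration argument that $G$ consists of four diamond blocks. The only cosmetic difference is that you bound $v \leq 13$ via $m \leq (v-1)/3$, whereas the paper solves $(5-3s-t)/n = 5/13$ directly; both lead to the same conclusion.
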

\begin{proof}
Let $G$ be a connected $n$-vertex minor-balanced graph with $\rho(G) = e(G)/n = 20/13$. Write $n$ as $3m+1+t$ for some $m \geq 0$ and $t \in \{0,1,2\}$. Since $2 - 1/(k\!-\!1) < 20/13 < 2 - 1/k$ with $k=3$, by the discussion in the second paragraph of the proof of Theorem \ref{thm:Bin[1,2]}, each edge is in a 4-vertex $2^=$-plant (that is, a diamond) or a $2^+$-plant. Hence, by Lemma~\ref{lem:k2treesEverywhere}, 
\[
 \frac{e(G)}{n} = 2 - \frac13 - \left ( \frac{2}{n} - \frac{s}{n} - \frac{t+1}{3n} \right )
\]
for some $s \geq 0$.  Thus
\[
 \frac{2}{n} - \frac{s}{n} - \frac{t+1}{3n} = \frac53 - \frac{20}{13} = \frac{5}{39},
\]
that is $(5-3s-t)/n = 5/13$.
Hence $13 | n$ and so $n \geq 13$; and it follows that $s=t=0$ and $n=13$, and e(G)=20.

We have seen that $n= 13 = 3m+1$, so $m=4$.  By the proof of Lemma~\ref{lem:k2treesEverywhere}, $e(G) \geq 2n-2-m =20$ with equality only if the exploration process (after setting $W_0=\{ v \}$)  four times finds a 4-set $U$ of vertices containing exactly one “old” vertex and spanning a $2^=$-plant - which is, as we noted, a diamond.  But this uses up all 13 vertices, so $G$ must have exactly 4 blocks, each of which is a diamond.
\end{proof}

By the above, for each critical density $\beta \leq 2$, there is a (decomposable) minor-closed class $\cA$ such that $\beta_{\cA}=\beta$ and there is a nearly regular minor-balanced graph $G \in \cA$ with $\rho(G)=\beta$ -- we may take $\cA$ as the class of graphs such that each component is a minor of $G$. If we consider the graph classes $\cA$ rather than just critical densities $\beta$ then we can say little, even when we restrict attention to classes such that $\beta_{\cA}$ is attained by some graph in $\cA$.  In particular, we cannot bound the maximum degree $\Delta$.  For example, if $\cA$ is the class of graphs in which each component is a star with at most $k$ leaves, then the unique connected minor-balanced graph $G \in \cA$ with $\rho(G)=\beta_{\cA}$ is the $k$-leaf star, with $\Delta=k$. 

\section{Excluding $t$-connected minors}
\label{sec.addable}

In this section we discuss the possible critical densities of minor-closed families of graphs in the case when all excluded minors are (at least) $t$-connected for some given $t$. We start by introducing some general tools which should be useful in this study, though some are used here only for small values of $t$; and then we focus on addable minor-closed classes of graphs (where all excluded minors are $2$-connected), and the corresponding set $B_2$. In particular we prove Theorem~\ref{thm:addableDensities}, apart from one detail. We then discuss the case when all excluded minors are $3$-connected, and conclude the section by proving some initial results about the structure of the sets $B_t$ for large values of $t$.

\subsection{Some general tools}

In this subsection we introduce the key definition of the $t$-density of a graph.  We consider when these densities are increasing in $t$; we see how they behave when we add a universal vertex; we give a lower bound on $\beta_{\cA}$ in terms of $t$-densities when $\cA$ is closed under $t$-sums;  and finally we relate a class being closed under $k$-sums and it having $t$-connected excluded minors.

Given a non-negative integer $t$, and a graph $G$ with $e(G)> \binom{t}{2}$ (and thus $v(G) \geq t+1$), the \emph{$t$-density} of $G$ is
\begin{equation}
\label{eq:tDensity}
 \rho_t(G) = \frac{e(G)-\binom{t}{2}}{v(G)-t}.
\end{equation}
It is convenient to define $\rho_t(G)$ to be 0 if $e(G) \leq \binom{t}{2}$. Thus $\rho_t(G)>0$ if and only if $e(G) > \binom{t}{2}$. Observe that $\rho_0(G)$ is the usual density $\rho(G)$. (See Lemma~\ref{lem.rhotnew} for motivation for the formula~\eqref{eq:tDensity}.)

Clearly we have $\rho_1(G) > \rho_0(G)$ for each graph $G$ with an edge.  Indeed, more generally, the $t$-densities of a graph are non-decreasing in $t$ when the density is sufficiently large.
\begin{prop}
\label{prop:densitiesOrder}
Let $t \geq 2$ be an integer, and let the graph $G$ satisfy $v(G) \geq t+1$ and $e(G) \geq (t-1)(v(G)-t/2)$.  Then
\[
 \rho_t(G) \geq \rho_{t-1}(G) > \cdots > \rho_0(G).
\]
\end{prop}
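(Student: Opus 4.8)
The plan is to reduce the chain of inequalities to a single comparison between consecutive $t$-densities, and then to prove that $\rho_t(G) \geq \rho_{t-1}(G)$ under the stated hypothesis by a direct mediant-type argument. Observe first that for any $s \geq 1$, the quantity $\rho_s(G) = \frac{e(G)-\binom{s}{2}}{v(G)-s}$ is, as a function of the real parameter $s$, the slope of the line joining the point $(v(G), e(G))$ to the point $(s, \binom{s}{2})$. Comparing $\rho_s(G)$ with $\rho_{s-1}(G)$ is therefore the same as comparing the slope from $(v(G),e(G))$ to $(s,\binom{s}{2})$ with the slope from $(v(G),e(G))$ to $(s-1,\binom{s-1}{2})$, and since $\binom{s}{2}-\binom{s-1}{2} = s-1$, this reduces to a clean inequality.

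Concretely, I would show that for $1 \leq s \leq t$,
\[
 \rho_s(G) \geq \rho_{s-1}(G) \iff \frac{e(G)-\binom{s}{2}}{v(G)-s} \geq \frac{e(G)-\binom{s-1}{2}}{v(G)-s+1}.
\]
Cross-multiplying (both denominators are positive since $v(G)\geq t+1 > s$) and simplifying using $\binom{s}{2} = \binom{s-1}{2}+(s-1)$, the inequality becomes equivalent to
\[
 e(G) \geq (s-1)\bigl(v(G) - \tfrac{s}{2}\bigr).
\]
The right-hand side $g(s) := (s-1)(v(G)-s/2)$ is a downward parabola in $s$ with $g(1) = 0 \le e(G)$ (as $G$ has an edge once $e(G)>\binom{t}{2}\geq\binom{1}{2}=0$ for $t\geq 2$) and, by hypothesis, $g(t) = (t-1)(v(G)-t/2) \leq e(G)$. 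By concavity of $g$, it follows that $g(s) \leq \max\{g(1),g(t)\} \leq e(G)$ for every $s \in [1,t]$; hence $\rho_s(G) \geq \rho_{s-1}(G)$ for each $s$ with $1 \le s \le t$. For $s=1$ this is $\rho_1(G) > \rho_0(G)$ whenever $G$ has an edge (the inequality is strict there, and is strict for $2 \le s \le t-1$ as well since then $g(s) < e(G)$ strictly by strict concavity provided $G$ has enough edges; one checks $e(G)>\binom{t}{2}$ forces strictness), and chaining the inequalities $\rho_t(G) \geq \rho_{t-1}(G) > \rho_{t-2}(G) > \cdots > \rho_1(G) > \rho_0(G)$ gives the result. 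One should be slightly careful that the intermediate inequalities are genuinely strict: this holds because for $1 \le s \le t-1$ we have $g(s) < g(t) \le e(G)$ or $g(s) < g(1)$... in any case $g(s)$ is strictly below $e(G)$ unless $s \in \{1,t\}$, and at $s=1$ strictness comes from $e(G) > 0$.

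The main obstacle, such as it is, is purely bookkeeping: verifying that the cross-multiplication step genuinely yields the parabola inequality $e(G) \geq (s-1)(v(G)-s/2)$, and then handling the strictness of the inner inequalities carefully (the displayed statement asserts strict inequalities $\rho_{t-1}(G) > \cdots > \rho_0(G)$, with only the top comparison $\rho_t \geq \rho_{t-1}$ allowed to be an equality). I expect no real difficulty — the concavity of $g$ does all the work — but it is worth spelling out that the hypothesis is used only to control $g(t)$, while the other endpoint $g(1)=0$ is automatic, so that the conclusion extends to all intermediate indices at once.
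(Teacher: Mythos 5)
Your reduction to the cross-multiplied inequality $e(G) \geq (s-1)\bigl(v(G)-\tfrac{s}{2}\bigr)$ is exactly the algebraic identity the paper uses, so the overall route is the same. However, there is a genuine error in the step that is supposed to finish the argument: you claim that ``by concavity of $g$, it follows that $g(s) \leq \max\{g(1),g(t)\}$ for every $s\in[1,t]$.'' This is the property of \emph{convex} functions, not concave ones. A concave function (such as your downward parabola) can exceed both endpoint values on the interior of the interval: for instance $g(s)=1-s^2$ on $[-1,1]$ has $g(\pm1)=0$ but $g(0)=1$. Concavity alone therefore cannot bound $g$ on $[1,t]$ by $\max\{g(1),g(t)\}$, and as written the argument does not establish the needed inequality for intermediate $s$.

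What actually makes the conclusion true is that $g$ is \emph{monotone increasing} on $[1,t]$, which is where the hypothesis $v(G)\geq t+1$ enters (and which your writeup never uses for this purpose). Writing $g(s) = -\tfrac{s^2}{2} + \bigl(v(G)+\tfrac12\bigr)s - v(G)$, the vertex of the parabola is at $s^* = v(G)+\tfrac12 \geq t+\tfrac32 > t$, so $g'(s) = v(G)-s+\tfrac12 > 0$ on all of $[1,t]$. Hence $g$ is strictly increasing there, and $g(s) < g(t) \leq e(G)$ for $1 \leq s < t$, while $g(t)\leq e(G)$ by hypothesis; this gives all the strict intermediate inequalities and the possible equality only at $s=t$. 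In particular your closing remark that ``the hypothesis is used only to control $g(t)$'' is also off: the lower bound on $v(G)$ is what pushes the vertex past $t$ and forces monotonicity, and without it the maximum of $g$ on $[1,t]$ could sit in the interior and exceed $e(G)$. The fix is short, but you should replace the concavity appeal with the monotonicity computation.
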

\begin{proof}
Note first that $e(G) \geq (t-1)(t+1-t/2) > \binom{t}{2}$, so $\rho_s(G)>0$ for each $s=0,1,\ldots,t$. Let $1 \leq s \leq t$: we want to show that $\rho_s(G) \geq \rho_{s-1}(G)$, with strict inequality if $s \neq t$.  But
\[
\begin{split} 
(\rho_s(G) - \rho_{s-1}(G)) (v-s)(v-(s-1)) & = \left(e - \binom{s}{2}\right)(v-(s-1)) - \left(e - \binom{s-1}{2}\right)(v-s) \\
   & = e - v \left(\binom{s}{2} - \binom{s-1}{2}\right) + (s-1) \binom{s}{2} - s \binom{s-1}{2} \\
   & = e - (s-1)( v - s/2) \geq 0,
\end{split}
\]
and indeed the last inequality is strict if $s \leq t-1$.
\end{proof}

In particular, if $G$ is a connected graph with $v(G) \geq 3$, then $e(G) \geq v(G)-1$ and so by the case $t=2$ of Proposition~\ref{prop:densitiesOrder} we have
\begin{equation} \label{eqn.rho210}
\rho_2(G)  \geq \rho_1(G) > \rho_0(G).
\end{equation}
(This is in fact the only result from Proposition~\ref{prop:densitiesOrder} which we shall use.)

Given a graph $G$, let $G^+$ be obtained from $G$ by adding a universal vertex; that is, a new vertex that is connected to all vertices of $G$.  Let us call $G^+$ the \emph{complete one-vertex extension} of $G$.  The following lemma will be used in the proofs of Theorem~\ref{thm:addableDensities} and Lemma~\ref{lem:G+strictly(t+1)Balanced}.
\begin{lemma}
 \label{lem:tDensitiesRelation}
 Let $t \geq 0$ be an integer and $G$ a graph. Then $ \rho_{t+1}(G^+) \leq \rho_t(G)+1$, with equality if $\rho_t(G)>0$.
\end{lemma}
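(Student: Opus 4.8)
The statement is a direct computation, so the plan is to write out both $t$-densities explicitly and compare. Recall that $G^+$ has $v(G^+) = v(G)+1$ vertices and $e(G^+) = e(G) + v(G)$ edges, since the new universal vertex is joined to all $v(G)$ vertices of $G$.

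\medskip

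\textbf{Main case: $\rho_t(G) > 0$.} Here $e(G) > \binom{t}{2}$, and I first check that $e(G^+) > \binom{t+1}{2}$ so that $\rho_{t+1}(G^+)$ is given by the formula \eqref{eq:tDensity} rather than being $0$; this follows because $e(G^+) = e(G) + v(G) \geq \binom{t}{2} + 1 + (t+1) > \binom{t+1}{2}$, using $v(G) \geq t+1$ (which holds since $e(G) > \binom{t}{2}$ forces at least $t+1$ vertices). Then I compute
\[
 \rho_{t+1}(G^+) = \frac{e(G^+) - \binom{t+1}{2}}{v(G^+) - (t+1)} = \frac{e(G) + v(G) - \binom{t+1}{2}}{v(G) - t}.
\]
Now use $\binom{t+1}{2} = \binom{t}{2} + t$ to rewrite the numerator as $\bigl(e(G) - \binom{t}{2}\bigr) + \bigl(v(G) - t\bigr)$, so that the fraction splits as
\[
 \rho_{t+1}(G^+) = \frac{e(G) - \binom{t}{2}}{v(G) - t} + 1 = \rho_t(G) + 1,
\]
giving equality exactly as claimed.

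\medskip

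\textbf{Degenerate case: $\rho_t(G) = 0$.} Here $e(G) \leq \binom{t}{2}$ by definition, and I must show only the inequality $\rho_{t+1}(G^+) \leq 1$. If $e(G^+) \leq \binom{t+1}{2}$ then $\rho_{t+1}(G^+) = 0 \leq 1$ trivially. Otherwise $\rho_{t+1}(G^+) = \frac{e(G)+v(G)-\binom{t+1}{2}}{v(G)+1-(t+1)}$, and since $e(G) \leq \binom{t}{2} = \binom{t+1}{2} - t$ the numerator is at most $v(G) - t = (v(G)+1) - (t+1)$, which is the denominator (and the denominator is positive in this subcase); hence $\rho_{t+1}(G^+) \leq 1$. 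This finishes the proof.

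\medskip

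There is no real obstacle here — the only point requiring a moment's care is verifying the side condition $e(G^+) > \binom{t+1}{2}$ in the main case so that one is entitled to apply the formula for $\rho_{t+1}$, and keeping track of the degenerate case where $\rho_t(G)=0$ so that only the inequality, not equality, is asserted.
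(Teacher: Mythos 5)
Your proof is correct and follows essentially the same direct computation as the paper's: establish $e(G^+) > \binom{t+1}{2}$ when $\rho_t(G)>0$, split the fraction using $\binom{t+1}{2} = \binom{t}{2}+t$ to get equality, and handle the degenerate case by noting the numerator is at most the denominator. Your organization of the degenerate case into two subcases is a touch more explicit than the paper's, but the ideas are identical.
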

\begin{proof}
Suppose first that $\rho_t(G)>0$. Then $v(G)>t$, and so
\[
e(G^+) = e(G)+v(G) > \binom{t}{2} +t = \binom{t+1}{2}.
\]
Thus
\[
 \rho_{t+1}(G^+) = \frac{e(G)+v(G)-\binom{t+1}{2}}{v(G)+1-(t+1)} = \frac{e(G)-\binom{t}{2}+v(G)-t}{v(G)-t} = \rho_{t}(G)+1.
\]
Now let us prove that the inequality holds in general. If $\rho_{t+1}(G^+)>0$ then as above
\[
 \rho_{t+1}(G^+) = \frac{e(G)-\binom{t}{2}+v(G)-t}{v(G)-t} \leq \rho_{t}(G)+1,
\]
which completes the proof.
\end{proof}

We say that a graph $G$ is \emph{t-minor-balanced} if $\rho_t(G)>0$ and  $\rho_t(G) \geq \rho_t(H)$ for each minor $H$ of $G$. Also, we call $G$ \emph{strictly t-minor-balanced} if $\rho_t(G)>0$ and  $\rho_t(G) > \rho_t(H)$ for each proper minor $H$ of $G$. Thus $G$ is (strictly) 0-minor-balanced if and only if it is (strictly) minor-balanced and $\rho(G)>0$. The case $t=1$ of the following lemma will be used in the proofs of Theorem~\ref{thm:addableDensities} and Proposition~\ref{prop:1+B_2inB3}.
\begin{lemma}
 \label{lem:G+strictly(t+1)Balanced}
If a graph $G$ is $t$-minor-balanced then $G^+$ is $(t+1)$-minor-balanced.
\end{lemma}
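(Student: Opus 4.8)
The plan is to reduce the statement about minors of $G^+$ to the hypothesis about minors of $G$, using Lemma~\ref{lem:tDensitiesRelation} to pass between $t$-densities of a graph and $(t+1)$-densities of its complete one-vertex extension. First I would record that $G$ being $t$-minor-balanced means $\rho_t(G) > 0$; by Lemma~\ref{lem:tDensitiesRelation} this gives $\rho_{t+1}(G^+) = \rho_t(G) + 1 > 0$, so the positivity requirement in the definition of $(t+1)$-minor-balanced holds for $G^+$. It remains to show $\rho_{t+1}(G^+) \geq \rho_{t+1}(H)$ for every minor $H$ of $G^+$.

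The key step is to understand an arbitrary minor $H$ of $G^+$. Let $u$ denote the universal vertex of $G^+$, so $G^+ - u = G$. Given a minor $H$ of $G^+$, obtained by deletions and contractions, I would track the fate of (the branch set containing) $u$. If $u$ and everything merged into it has been deleted, then $H$ is already a minor of $G$, so $\rho_{t+1}(H) \le \rho_t(H) + \text{(something)}$ — more precisely I should compare $\rho_{t+1}(H)$ with $\rho_{t+1}(H)$ treating $H$ as a minor of $G$ and bounding via $\rho_t$: since $H$ is a minor of $G$ and $G$ is $t$-minor-balanced, $\rho_t(H) \le \rho_t(G)$, and by the inequality half of Lemma~\ref{lem:tDensitiesRelation}, $\rho_{t+1}(H) \le \rho_{t+1}(H^+)$... this needs care. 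The cleaner route: if the branch set of $u$ survives in $H$, I can contract that whole branch set down to a single vertex $u'$ without decreasing the $(t+1)$-density (contracting edges inside a branch set is a legitimate further minor operation, and this only makes the argument about $H$ itself, not $G^+$). After doing so, $H$ has a vertex $u'$ adjacent to everything; write $H = H_0^+$ where $H_0 = H - u'$, and $H_0$ is a minor of $G$. Then by Lemma~\ref{lem:tDensitiesRelation}, $\rho_{t+1}(H) = \rho_{t+1}(H_0^+) \le \rho_t(H_0) + 1 \le \rho_t(G) + 1 = \rho_{t+1}(G^+)$, where the middle inequality uses that $G$ is $t$-minor-balanced and the last equality uses the equality case of Lemma~\ref{lem:tDensitiesRelation} together with $\rho_t(G) > 0$.

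The remaining case is when $u$ and its branch set are entirely deleted, so $H$ is a minor of $G$. Here I would argue: either $\rho_{t+1}(H) = 0$, in which case we are trivially done since $\rho_{t+1}(G^+) > 0$; or $\rho_{t+1}(H) > 0$, in which case $v(H) \ge t+1$, and then $H^+$ is a minor of $G^+$ with $\rho_{t+1}(H^+) = \rho_t(H) + 1 \le \rho_t(G)+1 = \rho_{t+1}(G^+)$, while $\rho_{t+1}(H) \le \rho_{t+1}(H^+)$ follows because $H$ is obtained from $H^+$ by deleting one vertex and this does not increase $(t+1)$-density once it is positive — or, more directly, one checks $\rho_{t+1}(H) \le \rho_t(H) + 1$ from the inequality half of Lemma~\ref{lem:tDensitiesRelation} applied with $H$ in place of $G^+$ is not quite the right shape, so I would instead just note $e(H) - \binom{t+1}{2} \le e(H) - \binom{t}{2}$ minus a correction and compare denominators $v(H)-(t+1)$ versus $v(H)-t$ directly.

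I expect the main obstacle to be the bookkeeping in the case where $u$'s branch set is deleted but $H$ is still reasonably dense: showing $\rho_{t+1}(H) \le \rho_{t+1}(G^+)$ needs the elementary but slightly fiddly inequality comparing $\rho_{t+1}(H)$ with $\rho_t(H)+1$ (which holds whenever $\rho_{t+1}(H)>0$, by the same computation as in Lemma~\ref{lem:tDensitiesRelation}), followed by $\rho_t(H) \le \rho_t(G)$ from $t$-minor-balancedness. Everything else is a clean application of Lemma~\ref{lem:tDensitiesRelation} after the observation that we may always contract the branch set of $u$ to a single universal vertex.
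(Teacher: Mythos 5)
Your proposal is correct and follows essentially the same two-case decomposition as the paper: minors of $G^+$ either do not use the universal vertex (so are minors of $G$), or may be taken to be $H^+$ for a minor $H$ of $G$, and in the latter case Lemma~\ref{lem:tDensitiesRelation} gives $\rho_{t+1}(H^+) \le \rho_t(H)+1 \le \rho_t(G)+1 = \rho_{t+1}(G^+)$. The only place where you diverge is the first case, and there you flag (correctly) that it is fiddly; the paper's route is slightly slicker and worth recording: for a minor $H$ of $G$ with $v(H)\ge 2$ and any $v \in V(H)$ one has $H \subseteq (H-v)^+$ on the same vertex set, whence $\rho_{t+1}(H) \le \rho_{t+1}\bigl((H-v)^+\bigr) \le \rho_t(H-v)+1 \le \rho_t(G)+1$, again entirely by Lemma~\ref{lem:tDensitiesRelation} and $t$-minor-balancedness, with no fresh computation. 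Note also that your interim claim that "$H$ is obtained from $H^+$ by deleting one vertex and this does not increase $(t+1)$-density once it is positive" is false for deleting an arbitrary vertex (deleting an isolated vertex, say, increases every $\rho_s$); it happens to be true for deleting the universal vertex of $H^+$, but you are better off with the direct calculation or the $(H-v)^+$ trick.
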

\begin{proof}
Let $G$ be $t$-minor-balanced.  Then by Lemma~\ref{lem:tDensitiesRelation}, $\rho_{t+1}(G^+) = \rho_t(G) +1 >0$. Observe that any edge-maximal minor of $G^+$ is either an edge-maximal minor $H$ of $G$, or is the complete one-vertex extension $H^+$  of such a graph $H$. Let $H$ be any minor of $G$. By Lemma~\ref{lem:tDensitiesRelation}, 
 \[
 \rho_{t+1}(H^+)  \leq  \rho_t(H) +1 \leq \rho_t(G) +1  = \rho_{t+1}(G^+);
 \]
and if $v(H) \geq 2$ and $v \in V(H)$, then 
\[
\rho_{t+1}(H) \leq \rho_t(H-v) +1 \leq \rho_t(G) +1  = \rho_{t+1}(G^+).
\]
It follows that $G^+$ is $(t+1)$-minor-balanced.
\end{proof}

Let $G$ and $G'$ be graphs such that for $W=V(G) \cap V(G')$ we have $|W| =k$ and both graphs induce a clique on $W$.  If $H$ is $G \cup G'$, or is obtained from $G \cup G'$ by deleting some edges within $W$, then $H$ is a \emph{$k$-sum} of $G$ and $G'$. Let $\omega(G)$ denote the clique number of $G$, i.e., the number of vertices in a largest complete subgraph of $G$.
\begin{lemma} \label{lem.rhotnew}
If $t \geq 0$, the minor-closed class $\cA$ is closed under $t$-sums, and $G \in \cA$ with $\omega(G) \geq t$, then $\beta_{\cA} \geq \rho_t(G)$.
\end{lemma}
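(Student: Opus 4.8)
The plan is to leverage closure under $t$-sums to manufacture, from the single graph $G$, an infinite family of graphs in $\cA$ whose densities converge to $\rho_t(G)$; then $\beta_{\cA}\ge\rho_t(G)$ follows by taking a limit. First I would dispose of the degenerate case: if $e(G)\le\binom{t}{2}$ then $\rho_t(G)=0$ by convention and $\beta_{\cA}\ge\rho_t(G)$ holds trivially. So assume $e(G)>\binom{t}{2}$; then $\rho_t(G)>0$ and $v(G)>t$. Using $\omega(G)\ge t$, fix a set $W$ of $t$ vertices of $G$ inducing a complete subgraph, and set $d=v(G)-t\ge 1$ and $b=e(G)-\binom{t}{2}\ge 1$, so that $\rho_t(G)=b/d$.

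Next I would build graphs $H_N\in\cA$ for $N\ge 1$ by repeatedly gluing copies of $G$ along $W$. Set $H_1=G$. Given $H_{N-1}$ (which contains the clique on $W$), take a fresh copy $G_N$ of $G$, identify its copy of $W$ with $W$ and keep it otherwise disjoint from $H_{N-1}$, and let $H_N=H_{N-1}\cup G_N$. Since $W$ induces a clique in both $H_{N-1}$ and $G_N$ and no edges are deleted, $H_N$ is a $t$-sum of $H_{N-1}$ and $G_N$ in the sense defined in the paper; as $\cA$ is closed under isomorphism (so $G_N\in\cA$) and under $t$-sums, induction gives $H_N\in\cA$ for every $N$. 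A direct count shows $v(H_N)=t+Nd$ and $e(H_N)=\binom{t}{2}+Nb$, since each new copy $G_N$ contributes exactly $d$ new vertices and $b$ new edges (its edges inside $W$ already being present).

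Finally I would pass to the limit: $\rho(H_N)=\dfrac{\binom{t}{2}+Nb}{t+Nd}\to\dfrac{b}{d}=\rho_t(G)$ as $N\to\infty$, and since $H_N\in\cA$ for all $N$ we get $\beta_{\cA}\ge\sup_{N}\rho(H_N)\ge\rho_t(G)$ (the argument works whether $\rho(H_N)$ approaches the limit from above or below). There is no genuine obstacle here; the only points requiring a little care are verifying that the iterated gluing really is, at each step, a $t$-sum of two members of $\cA$ so that the closure hypothesis applies, and keeping the degenerate case $e(G)\le\binom{t}{2}$ aside. Note that $t=0$ is included, a $0$-sum being a disjoint union, in which case the construction is simply $N$ disjoint copies of $G$ and $\rho(H_N)=\rho(G)=\rho_0(G)$ for all $N$.
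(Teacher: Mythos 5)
Your proof is correct and follows essentially the same route as the paper: the paper also forms $G_k$ by gluing $k$ copies of $G$ along a fixed $t$-vertex clique, computes $\rho(G_k)=\bigl(\binom{t}{2}+k(e(G)-\binom{t}{2})\bigr)/\bigl(t+k(v(G)-t)\bigr)\to\rho_t(G)$, and concludes. Your write-up merely spells out the induction that each gluing is a valid $t$-sum and handles the degenerate case $e(G)\le\binom{t}{2}$ explicitly, which the paper dispatches with ``We may assume $\rho_t(G)>0$.''
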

\begin{proof} 
We may assume that $\rho_t(G)>0$, so $v(G) \geq t+1$. Let $G_k$ be formed from $k$ copies of $G$ all overlapping on a given $t$-vertex clique of $G$.  Then
\[
 \rho(G_k) = \frac{\binom{t}{2} + k(e(G) - \binom{t}{2})}{t+k(v(G)-t)} \to \rho_t(G) \mbox{ as } k \to \infty,
\]
so $\beta_{\cA} \geq \rho_t(G)$.
\end{proof}

We close this subsection by relating a graph class $\cA$ having $t$-connected excluded minors to $\cA$ being closed under $k$-sums. If a minor-closed class $\cA$ is $\Ex(K_t)$ for some $t \geq 2$, then clearly $\cA$ is closed under $k$-sums for each $k \geq 0$ (trivially so for $k \geq t$). Similarly, if $\cA$ is the class of graphs with treewidth at most $t$ for some given $t \geq 1$, then $\cA$ is closed under $k$-sums for each $k \geq 0$ (since in a tree-decomposition of a graph each clique is contained within some bag). Observe that a minor-closed class is closed under 0-sums if and only if it is decomposable, i.e., if and only if each excluded minor is connected. The following result is used in the proofs of Lemma~\ref{lem:2mb3connected} and Proposition~\ref{prop.beta3}.

\begin{prop} \label{prop:kSums}
Let $\cA$ be a proper minor-closed class of graphs, not of the form $\Ex(K_t)$ for some~$t$. For each integer $t \geq 0$, if each excluded minor for $\cA$ is $(t+1)$-connected then $\cA$ is closed under $k$-sums for each $k \leq t$. For each $t \in \{0,1,2\}$ the converse also holds; that is, $\cA$ is closed under $k$-sums for each $k \leq t$ if and only if each excluded minor is $(t+1)$-connected.  
\end{prop}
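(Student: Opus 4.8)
The plan is to prove the two implications separately. The forward implication (each excluded minor $(t+1)$-connected $\Rightarrow$ $\cA$ closed under $k$-sums for all $k\le t$) can be proved for every $t\ge 0$; the converse only for $t\in\{0,1,2\}$, and it is there that the hypothesis that $\cA$ is not of the form $\Ex(K_s)$ gets used.

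\textit{Forward direction.} It suffices to show that the \emph{full} $k$-sum $H_0:=G_1\cup G_2$ lies in $\cA$ whenever $G_1,G_2\in\cA$ share a clique $W$ with $|W|=k\le t$, since every other $k$-sum of $G_1,G_2$ is a subgraph, hence a minor, of $H_0$. If $H_0\notin\cA$, some excluded minor $M$ is a minor of $H_0$, and being $(t+1)$-connected it is $(k+1)$-connected. I would then establish the standard fact that a $(k+1)$-connected minor $M$ of a $k$-sum $G_1\cup G_2$ over a $k$-clique $W$ is a minor of $G_1$ or of $G_2$ — which contradicts $G_1,G_2\in\cA$ and $M\notin\cA$. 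For this, fix a minor model $(X_v)_{v\in V(M)}$ of $M$ in $H_0$. At most $k$ branch sets meet $W$; let $U$ be their index set, so $|U|\le k<v(M)$ and $M-U$ is connected. Each branch set disjoint from $W$ lies in one component of $H_0-W$, hence entirely in $V(G_1)\sm W$ or in $V(G_2)\sm W$; connectedness of $M-U$ then forces all of them into one of these, say $V(G_1)\sm W$. Then $(X_v\cap V(G_1))_{v\in V(M)}$ is a minor model of $M$ in $G_1$: each set is nonempty (for $v\in U$ it contains $X_v\cap W$) and connected (any excursion of $X_v$ into $V(G_2)\sm W$ enters and leaves through $W$ and can be short-cut using a clique edge of $W\subseteq G_1$), and every edge of $M$ is still realised — an edge realised in $H_0$ inside $V(G_2)\sm W$ must join two branch sets meeting $W$, and is replaced by an edge of the clique $W$.

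\textit{Converse ($t\in\{0,1,2\}$).} Assume $\cA$ is closed under all $k$-sums with $k\le t$. The crux is the sub-lemma: \emph{every excluded minor $M$ with connectivity $\kappa(M)=k\le 2$ equals $K_{k+1}$}. If $M$ is not complete, a minimum separator $S$ (with $|S|=k$) separates two non-adjacent vertices; write $M-S=C\sqcup D$ with $C$ a component and $D$ (nonempty) the rest, and put $M_1=M[S\cup C]$, $M_2=M[S\cup D]$, which are proper minors of $M$ and hence lie in $\cA$. Let $M_i^+$ be $M_i$ with the clique on $S$ filled in. If $k\le 1$, or if $k=2$ and the two vertices of $S$ are adjacent in $M$, then $M_i^+=M_i$; otherwise $k=2$, $S=\{a,b\}$, $ab\notin E(M)$, and since $M$ is $2$-connected every component of $M-S$ meets both $a$ and $b$, so contracting an $a$--$b$ path through $D$ (respectively through $C$) converts $M_1$ into $M_1^+$ (respectively $M_2$ into $M_2^+$); in all cases $M_i^+$ is a proper minor of $M$, so $M_i^+\in\cA$. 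But $M$ is obtained from the $k$-sum of $M_1^+$ and $M_2^+$ over $S$ by deleting the edges we filled in, so $M\in\cA$ — contradicting $M\notin\cA$. Hence $M$ is complete and $\kappa(M)=k$ forces $M=K_{k+1}$.

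Granting the sub-lemma I finish as follows. Since $\cA$ is proper, the smallest complete graph $K_{s_0}\notin\cA$ is an excluded minor, and it is the \emph{only} complete excluded minor (if $K_j,K_{j'}$ with $j<j'$ were both excluded then $K_j$, a proper minor of $K_{j'}$, would lie in $\cA$). Note $s_0\ge 2$, since $s_0=1$ would force $\cA=\Ex(K_1)$. Suppose $s_0\le t+1$, so $s_0\in\{2,3\}$. Any excluded minor $M\ne K_{s_0}$ has no $K_{s_0}$ minor (a proper minor of an excluded minor lies in $\cA$), yet by the sub-lemma together with uniqueness of the complete excluded minor it satisfies $\kappa(M)\ge t+1\ge s_0$; but an $s_0$-connected graph with $s_0\le 3$ has a $K_{s_0}$ minor (for $s_0=2$ it contains an edge, for $s_0=3$ a cycle) — a contradiction. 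So the only excluded minor is $K_{s_0}$, i.e. $\cA=\Ex(K_{s_0})$, against the hypothesis; hence in fact $s_0\ge t+2$. Finally, any excluded minor $M$ with $\kappa(M)\le t$ would by the sub-lemma equal $K_{\kappa(M)+1}$, hence equal $K_{s_0}$, forcing $s_0=\kappa(M)+1\le t+1$ — impossible; so every excluded minor is $(t+1)$-connected. The hardest point is the sub-lemma, precisely the claim that $M_i^+$ is a minor of $M$: this is exactly where $t\le 2$ is needed, since for a separator of size $\ge 3$ one cannot in general complete it to a clique using vertex-disjoint connections through the other side.
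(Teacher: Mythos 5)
Your proof is correct and rests on the same two key ideas as the paper's: for the forward direction, that a $(t+1)$-connected minor model in a $k$-sum ($k\le t$) must live inside one side (you flesh out the short-cutting through the clique $W$ that the paper leaves inside the parenthetical ``or else $M$ is a minor of $G$,'' then the paper instead finishes by producing a small separating set $S$ of $M$); and for the converse, that a separator of size $\le 2$ in an excluded minor can be completed to a clique by contracting a path through the other side, yielding a forbidden $k$-sum. The packaging of the converse differs: the paper builds up through the cases $t=0,1,2$ (and quietly asserts ``$M$ is not $K_3$'' in the $t=2$ step), whereas you isolate a single sub-lemma (connectivity $k\le t$ forces $M=K_{k+1}$) and then finish with a clean, explicit argument about the smallest complete excluded minor $K_{s_0}$, which makes the role of the hypothesis $\cA\ne\Ex(K_s)$ more transparent. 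Both routes use the same lemma at their core; yours is a somewhat more systematic reorganization of the same argument rather than a genuinely different method.
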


Let $\cA$ be the class of graphs with treewidth at most 3. Then $\cA$ is closed under $k$-sums for each $k \geq 0$ and so in particular for each $k \leq 3$.  However, of the four excluded minors for $\cA$, two are cubic (as shown by Arnborg, Proskurowski, and Corneil \cite{APC3trees}, and by Satyanarayana and Tung \cite{ST3trees}), and so clearly are not 4-connected. Thus the converse above does not hold for $t=3$.
\begin{proof}
Suppose first that each excluded minor for $\cA$ is $(t+1)$-connected. Let $G$ and $H$ be graphs in $\cA$ such that for $W=V(G) \cap V(H)$ we have $|W| \leq t$ and both graphs induce a clique on $W$: we must show that $G \cup H$ is in $\cA$. Assume for a contradiction that $G \cup H$ is not in $\cA$.  Then $G \cup H$ has as a minor some excluded minor $M$ for $\cA$.   Thus there are disjoint non-empty subsets $W_v$ of $V(G) \cup V(H)$ for each $v \in V(M)$, such that for each $v \in V(M)$ the induced subgraph of $G \cup H$ on $W_v$ is connected, and for each edge $uv$ of $M$ there is an edge between $W_u$ and $W_v$ in $G \cup H$.  Since $W$ induces a clique, there must exist $v_1 \in V(M)$ such that $W_{v_1} \subseteq V(H) \setminus V(G)$ (or else $M$ is a minor of $G$), and similarly there exists $v_2 \in V(M)$ such that $W_{v_2} \subseteq V(G) \setminus V(H)$.  Let $S$ be the set of vertices $x \in V(M)$ such that $W_x \cap W \neq \emptyset$.  Then $S$ separates $M$ and $|S| \leq |W| \leq t$, so $M$ is not $(t+1)$-connected, a contradiction.

Now let $t \in \{0,1,2\}$, and suppose that $\cA$ is closed under $k$-sums for each $k \leq t$.  If $t=0$ then $\cA$ is decomposable (as we noted earlier), so each excluded minor is connected.  If $t=1$ then a graph is in $\cA$ if and only if each block is, and $K_2$ is not an excluded minor, so each excluded minor is 2-connected.  Suppose that $t=2$.  By the last case, each excluded minor $M$ is 2-connected.  Suppose that $M$ is not $3$-connected: then, since $M$ is not $K_3$ and so $v(M) \geq 4$, there exist vertex sets $U_1$ and $U_2$ such that $U_1 \cup U_2 = V(M)$, $U_1 \cap U_2$ consists of two vertices $u$ and $v$ which form a cut, and both $U_1 \setminus U_2$ and $U_2 \setminus U_1$ are non-empty. As $M$ is $2$-connected, there must be a $u-v$ path in $U_2$, so if we contract $U_2-v$ onto $u$ we obtain a graph $H_{U_1}$ in $\cG$ in which $uv$ is an edge ($H_{U_1}$ is in $\cG$ because it is a strict minor of a forbidden minor $M$). Similarly, if we contract $U_1-v$ onto $u$ we obtain a graph $H_{U_2}$ in $\cG$ in which $uv$ is an edge. But the 2-sum of $H_{U_1}$ and $H_{U_2}$ on $\{u,v\}$ is either $M$ or $M$ with the edge $uv$ added, so $M$ is in $\cA$, a contradiction. Hence $M$ is 3-connected, as required.
\end{proof}

\subsection{Critical densities for addable families}

Let us now focus on the structure of $B_2$; that is, on the critical densities of minor-closed families with all excluded minors being $2$-connected. This requires us to understand the $1$-density $\rho_1(G)$. Recall that $\rho_1(G)= e(G)/(v(G)-1)$ for graphs $G$ with $v(G) \geq 2$, and $\rho_1(K_1)=0$. Note first that, for a connected graph $G$ other than $K_1$, the $1$-density is a convex combination of the $1$-densities of its blocks: indeed, since $v(G)-1= \sum_B (v(B)-1)$ where the sum is over the blocks $B$ of $G$, we have
\[
 \rho_1(G) = \frac{\sum_B e(B)}{\sum_B (v(B)-1)} = \sum_B \alpha_B \, \rho_1(B),
\]
where the positive weights $\alpha_B= \tfrac{v(B)-1}{v(G)-1}$ sum to 1. In particular, if each block of a connected graph $G$ is a copy of $B$ then $\rho_1(G)= \rho_1(B)$.  Also, for any graph $G$ with at least one edge, whether or not it is connected,
\begin{equation} \label{eqn.rho1}
 \rho(G)<\rho_1(G) \leq \max_B \rho_1(B)
\end{equation}
where the maximum is over the blocks $B$ of $G$.

Next we show that for an addable class $\cA$ of graphs, $\beta_{\cA}$ is the supremum of $\rho_1(B)$ over the possible blocks in~$\cA$.  Indeed, we have the following more detailed result, which singles out the class $\Ex(K_3)$ of forests.  (Observe that no forests are 2-connected.)
\begin{prop} \label{prop.beta-add}
Let $\cA$ be an addable minor-closed class of graphs.  Then 
\begin{enumerate}[(a)]
 \item \label{case:prop.beta-add1} $\beta_{\cA} = \sup \{ \rho_1(G): G \in \cA\}$;
 \item \label{case:prop.beta-add2} if $K_3 \not\in \cA$, then $\cA$ is the class $\Ex(K_3)$ of forests, $\beta_{\cA}=1$, and $\rho_1(T)=1$ for each connected graph (tree) $T \in \cA$ with at least one edge; and
 \item \label{case:prop.beta-add3} if $K_3 \in \cA$, then $ \beta_{\cA} = \sup \{ \rho_1(G): G \in \cA,  \, G \mbox{ is 2-connected} \}$.
\end{enumerate}
\end{prop}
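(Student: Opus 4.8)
The plan is to prove the three parts in order, deriving (a) as an easy consequence of (b) and (c). First I would dispose of the case $K_3 \notin \cA$. Since $\cA$ is addable, each excluded minor is $2$-connected; if $K_3 \notin \cA$ then $K_3$ itself must be an excluded minor (every proper minor of $K_3$ — an edge, and smaller — lies in $\cA$ since $\cA$ is non-empty and, being addable hence decomposable, closed under taking components, and contains $K_1$ and at least one graph with an edge because otherwise $\cA$ would be trivial). But $\Ex(K_3)$ is exactly the class of forests, and since $\cA \subseteq \Ex(K_3)$ and every forest has all its minors in $\cA$ (contractions and deletions of forests are forests, and $\cA$ being addable is in particular minor-closed and decomposable and contains arbitrarily long paths once it contains one edge — actually one must argue that all of $\Ex(K_3)$ is forced), we get $\cA = \Ex(K_3)$. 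Then $\beta_{\cA}=1$: a tree on $n$ vertices has $n-1$ edges, so $\rho_1(T) = (n-1)/(n-1) = 1$ for every tree with an edge, and $\rho(T) = (n-1)/n < 1$ with supremum $1$. This settles (b), and also establishes (a) in this case since $\sup\{\rho_1(G) : G \in \cA\} = 1 = \beta_{\cA}$.

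For part (c), suppose $K_3 \in \cA$. The upper bound $\beta_{\cA} \le \sup\{\rho_1(G) : G \in \cA, G \text{ 2-connected}\}$: take any $G \in \cA$; we may assume $G$ has an edge and, since $\rho$ of a disjoint union is a weighted average of the $\rho$ of components, we may assume $G$ is connected with $v(G) \ge 2$. By \eqref{eqn.rho1}, $\rho(G) < \rho_1(G) \le \max_B \rho_1(B)$ over blocks $B$ of $G$. Each block $B$ of $G$ is a minor of $G$ (in fact a subgraph), hence lies in $\cA$, and each block is either $2$-connected or a single edge $K_2$; for $K_2$ we have $\rho_1(K_2) = 1 \le \rho_1(K_3) = 3/2$ (and $K_3 \in \cA$ is $2$-connected), so the maximum is attained at a $2$-connected block. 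Hence $\rho(G) \le \sup\{\rho_1(H) : H \in \cA \text{ 2-connected}\}$, and taking the supremum over $G$ gives $\beta_{\cA} \le \sup\{\rho_1(H): H \in \cA \text{ 2-connected}\}$.

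For the reverse inequality in (c), I would invoke Lemma~\ref{lem.rhotnew} with $t=1$: since $\cA$ is addable it is bridge-addable, and one checks that a bridge-addable decomposable (hence $0$-sum-closed) minor-closed class is closed under $1$-sums — gluing two graphs at a single vertex amounts to taking their disjoint union (decomposability) and then adding no edge at all, or, put differently, a $1$-sum of $G$ and $G'$ at a common vertex $w$ is obtained from the disjoint union by identifying two vertices, which for minor-closed addable classes can be realized by the blocks-structure characterization; more directly, every $1$-sum of two graphs in an addable minor-closed class is again in the class because each excluded minor is $2$-connected, so by Proposition~\ref{prop:kSums} with $t=1$ the class is closed under $k$-sums for $k \le 1$. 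Then for any $2$-connected $G \in \cA$ we have $\omega(G) \ge 2 = t+1 > t$, so Lemma~\ref{lem.rhotnew} gives $\beta_{\cA} \ge \rho_1(G)$; taking the supremum over all $2$-connected $G \in \cA$ yields $\beta_{\cA} \ge \sup\{\rho_1(G): G \in \cA \text{ 2-connected}\}$, completing (c). Finally (a) follows: when $K_3 \notin \cA$ it was checked above; when $K_3 \in \cA$, $\sup\{\rho_1(G) : G \in \cA\}$ lies between the $2$-connected supremum (a subset of the graphs ranged over) and $\beta_{\cA}$ by the block-averaging bound \eqref{eqn.rho1}, and both ends equal $\beta_{\cA}$ by (c).

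The main obstacle I anticipate is the bookkeeping around closure under $1$-sums and the precise verification that $\cA = \Ex(K_3)$ in part (b) — i.e. that $\cA$ cannot be a proper subclass of the forests. This needs the observation that an addable (so bridge-addable) class containing any graph with an edge must contain all paths, hence all forests after also using decomposability and minor-closure; alternatively, one cites that an addable minor-closed class is determined by its $2$-connected excluded minors and $K_3$ is the only $2$-connected graph with no proper $2$-connected minor, so $\Ex(K_3)$ is the unique such class avoiding $K_3$. The rest is routine manipulation of the inequalities \eqref{eqn.rho1} together with Lemma~\ref{lem.rhotnew}.
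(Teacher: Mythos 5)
Your proof is correct and follows essentially the same route as the paper: upper bound for (c) via the block-decomposition inequality~\eqref{eqn.rho1}, lower bound via closure under $1$-sums (Proposition~\ref{prop:kSums} with $t=1$) combined with Lemma~\ref{lem.rhotnew}, with parts (a) and (b) derived in the same way. The only minor inefficiency is the meandering discussion of why $1$-sums preserve $\cA$ — citing Proposition~\ref{prop:kSums} directly, as you eventually do, suffices; your check that $\omega(G) \geq 2 = t+1$ is also unnecessary, since the lemma only requires $\omega(G) \geq t = 1$.
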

\begin{proof} 
Part (\ref{case:prop.beta-add2}) is straightforward, and implies that (\ref{case:prop.beta-add1}) holds when $\cA$ is $\Ex(K_3)$. Now consider part (\ref{case:prop.beta-add3}). By~\eqref{eqn.rho1} we have
\begin{equation}\label{eqn.sups}
\beta_{\cA} \leq  \sup \{ \rho_1(G): G \in \cA,  \, G \mbox{ is 2-connected} \} \leq \sup \{ \rho_1(G): G \in \cA\}. 
\end{equation}
(Note that we need not consider $G=K_2$ in the suprema here, since $K_3 \in \cA$.) Now let $G \in \cA$.  Then by~\eqref{eqn.rho1} again, there is a 2-connected graph $B \in \cA$ with $\rho_1(G) \leq \rho_1(B)$. By Proposition~\ref{prop:kSums}, $\cA$ is closed under 1-sums; and so, by Lemma~\ref{lem.rhotnew}, 
\[
 \beta_{\cA} \geq \rho_1(B) \geq \rho_1(G).
\]
Hence $\beta_{\cA}$ is at least the supremum of the values $\rho_1( G)$, and so equality holds throughout~\eqref{eqn.sups}.  This completes the proof of part (\ref{case:prop.beta-add3}), and of part (\ref{case:prop.beta-add1}). 
\end{proof}

The next result concerns 1-minor-balanced graphs. Observe that every tree with at least one edge is 1-minor-balanced. Part (\ref{case:prop.beta-add3}) of Lemma~\ref{lem:1minbal} will be used in the proof of Theorem~\ref{thm:addableDensities}.

\begin{lemma}
\label{lem:1minbal}
Let $G$ be a $1$-minor-balanced graph. Then
\begin{enumerate}[(a)]
 \item \label{case:1minbal1} $G$ is connected;
 \item \label{case:1minbal2} If $G$ is strictly $1$-minor-balanced, then either $G$ is $K_2$ or $G$ is 2-connected; and
 \item \label{case:1minbal3} $G$ is strictly minor-balanced.
\end{enumerate} 
\end{lemma}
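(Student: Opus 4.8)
The plan is to establish the three parts in order, leaning on part (a) to get connectivity for part (b), and on the elementary identity $\rho_1(X)/\rho_0(X) = v(X)/(v(X)-1)$ (valid for any $X$ with $v(X)\ge 2$ and an edge) for part (c). Throughout, recall that $\rho_1(G)>0$ forces $e(G)\ge 1$, hence $v(G)\ge 2$ and $\rho_0(G)>0$.

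For part (a): suppose $G$ is $1$-minor-balanced but disconnected. Some component has an edge, so let $H$ be a component with $v(H)\ge 2$ and $\rho_1(H)$ as large as possible. Since $H$ is a minor of $G$ (delete the vertices of all other components), it suffices to derive $\rho_1(H)>\rho_1(G)$. For every component $C$ of $G$ we have $e(C)\le \rho_1(H)\,(v(C)-1)$ — with both sides zero when $v(C)=1$, and by maximality of $\rho_1(H)$ otherwise. Summing over the $c\ge 2$ components gives $e(G)\le \rho_1(H)\,(v(G)-c)\le \rho_1(H)\,(v(G)-2)<\rho_1(H)\,(v(G)-1)$, whence $\rho_1(G)<\rho_1(H)$, a contradiction.

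For part (b): by (a), $G$ is connected, and it has an edge, so $v(G)\ge 2$. If $v(G)=2$ then $G=K_2$. Otherwise $v(G)\ge 3$, and if $G$ were not $2$-connected it would have a cut vertex and hence at least two blocks. Using the convex-combination formula $\rho_1(G)=\sum_B \alpha_B\,\rho_1(B)$ recalled just before the lemma, with all $\alpha_B>0$, some block $B$ satisfies $\rho_1(B)\ge \rho_1(G)$; but with at least two blocks $B$ is a proper subgraph, hence a proper minor, of $G$, contradicting strict $1$-minor-balance. For part (c): let $H$ be a proper minor of $G$; we must show $\rho_0(H)<\rho_0(G)$. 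This is immediate if $e(H)=0$, so assume $e(H)\ge 1$ (hence $v(H)\ge 2$) and, for contradiction, that $\rho_0(H)\ge \rho_0(G)$. Combining this with $\rho_1(H)\le \rho_1(G)$ (from $1$-minor-balance) gives
\[
\frac{v(H)}{v(H)-1}=\frac{\rho_1(H)}{\rho_0(H)}\le \frac{\rho_1(G)}{\rho_0(G)}=\frac{v(G)}{v(G)-1}.
\]
Since $x\mapsto x/(x-1)$ is strictly decreasing, $v(H)\ge v(G)$; as $H$ is a minor of $G$ this forces $v(H)=v(G)$, which means $H$ was obtained using no contractions and no vertex deletions, i.e. $H$ is a spanning subgraph of $G$. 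Properness then gives $e(H)<e(G)$ and hence $\rho_0(H)<\rho_0(G)$, a contradiction.

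None of the steps is deep; the one point that needs care is the reduction in (c) — recognising that a minor on the full vertex set must be a spanning subgraph, and that rewriting the two density inequalities in terms of the ratio $v/(v-1)$ collapses them into a single monotonicity statement. A minor subtlety in (a) is handling singleton components, where the inequality $e(C)\le \rho_1(H)(v(C)-1)$ holds as the trivial equality $0\le 0$.
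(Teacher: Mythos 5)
Your proof is correct, and in each part you take a route that differs from the paper's, so it is worth comparing. For part (a), the paper first rules out isolated vertices and then argues with a bipartition of $V(G)$ into two parts with no edges between them, whereas you sum the bound $e(C)\leq\rho_1(H)(v(C)-1)$ over all components at once (handling singletons as the trivial $0\leq 0$); both work and yours is arguably a bit cleaner. For part (b), the paper argues directly with sets $U_1,U_2$ meeting in the cut vertex, while you invoke the block convex-combination formula $\rho_1(G)=\sum_B\alpha_B\rho_1(B)$ already recalled in the text; these are essentially the same content in different packaging. The genuinely different step is part (c): the paper cross-multiplies the inequality $\frac{e(H)}{v(H)-1}\leq\frac{e(G)}{v(G)-1}$ to get $e(H)v(G)-e(G)v(H)\leq e(H)-e(G)$ and then uses directly that a proper minor of a connected graph has strictly fewer edges, which gives the strict inequality in one line. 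You instead rewrite the density inequalities as a single monotonicity statement for $v\mapsto v/(v-1)$, conclude $v(H)=v(G)$, and deduce that $H$ is a proper spanning subgraph. Both arguments are elementary and valid; the paper's is more direct, while yours isolates the structural fact (only edge deletions can occur) that makes the inequality strict, at the cost of a short case split for $e(H)=0$.
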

\begin{proof}
(\ref{case:1minbal1})
To show that $G$ is connected, we may assume that $v(G) \geq 3$, since $e(G) \geq 1$. There cannot be an isolated vertex~$v$, since then $\rho_1(G-v) > \rho_1(G)$.  Suppose that $V(G)$ can be partitioned into non-empty parts $U_1$ and $U_2$ with $e(U_1,U_2)=\emptyset$.  Then $|U_1|, |U_2| \geq 2$.  Write $G_{U_1}$ for the induced subgraph $G[U_1]$, and similarly for $G_{U_2}$.  Then
\[
 \rho_1(G) = \frac{e(G_{U_1})+e(G_{U_2})}{|U_1|+|U_2|-1}<  \frac{e(G_{U_1})+e(G_{U_2})}{(|U_1|-1)+(|U_2|-1)} \leq \max \{ \rho_1(G_{U_1}), \rho_1(G_{U_2}) \},
\]
which contradicts $G$ being 1-minor-balanced.

(\ref{case:1minbal2}) As before, we may suppose that $v(G) \geq 3$.  Suppose that $V(G)=U_1 \cup U_2$ where $|U_1 \cap U_2| =1$, $U_1 \setminus U_2$ and $U_2 \setminus U_1$ are non-empty, and $e(U_1 \backslash U_2,U_2 \backslash U_1)=\emptyset$. Then $|U_1|, |U_2| \geq 2$, and with $G_{U_1}$ and $G_{U_2}$ as before,
\[
 \rho_1(G)= \frac{e(G_{U_1})+e(G_{U_2})}{(|U_1|-1)+(|U_2|-1)} \leq \max \{ \rho_1(G_{U_1}), \rho_1(G_{U_2}) \},
 \] 
which contradicts the assumption that $G$ is strictly 1-minor-balanced.

(\ref{case:1minbal3}) Let $G$ be 1-minor-balanced, and let $H$ be a proper minor of $G$ with at least one edge. Then
\[
 \frac{e(H)}{v(H)-1} \leq \frac{e(G)}{v(G)-1}, 
\]
so
\[
 e(H)v(G)-e(G)v(H) \leq e(H)-e(G) <0,
\]
where the last inequality follows from the fact that a $1$-minor-balanced graph must be connected, and therefore any strict minor must contain strictly fewer edges. Hence $\rho(H)< \rho(G)$, and thus $G$ is strictly minor-balanced.
\end{proof}
\smallskip

\begin{proofOfTheorem}{thm:addableDensities}
Let us start by proving that $1+B \subseteq B_2$ and then that $B_2 \subseteq B'$. For the sake of the flow of the argument, we defer the proof of the fact that $1+B \neq B_2$ to Section \ref{sec:eppsteinsQuestion}, as it will require us to introduce a particular family of graphs, which we shall analyze in more detail there. For now, let us say only that in Section~\ref{sec:eppsteinsQuestion} we show that $25/11 \in B_2 \setminus (1+B)$.

Let $\beta_0 \in B$.  Let $\cA_0$ be a decomposable class with $\beta_{\cA_0}=\beta_0$.  We want to show that $1+\beta_0 \in B_2$. Since $1 \in B_2$ (consider the class of forests) we may assume that $\beta_0>0$. There are two cases. 

Suppose first that $\beta_0$ is achieved in $\cA_0$; that is, there is a graph $G \in \cA_0$ with $\rho(G) = \beta_0$.    Then $G$ is minor-balanced, and we may assume that $G$ is connected. Hence the complete one-vertex extension $G^+$ is 2-connected, and $G^+$ is 1-minor-balanced by Lemma~\ref{lem:G+strictly(t+1)Balanced}.

Let $\cG$ be the class of graphs such that each block is a minor of $G^+$.  Then $\cG$ is minor-closed and addable.  Since $G^+$ is 1-minor-balanced, the supremum of the 1-densities of the blocks in $\cG$ is $\rho_1(G^+)$, which by Lemma~\ref{lem:tDensitiesRelation} equals $\rho(G)+1= \beta_0 +1$, and thus $\beta_{\cG}= \beta_0+1$ by Proposition~\ref{prop.beta-add}, so $1+ \beta_0 \in B_2$ as required.
\smallskip

The second case is when $\beta_0$ is not achieved in $\cA_0$; that is $\rho(G)< \beta_0$ for each $G \in \cA_0$. There is a sequence of connected minor-balanced graphs $G_1, G_2,\ldots$ in $\cA_0$ such that $\rho(G_j)$ strictly increases to $\beta_0$. By Lemma~\ref{lem:tDensitiesRelation}, $\rho_1(G_j^+)$ strictly increases to $1+\beta_0$. Let $\cG$ be the class of graphs such that each block is a minor of $G_j^+$ for some $j$. Then $\cG$ is minor-closed and addable. By Lemma~\ref{lem:G+strictly(t+1)Balanced}, each graph $G_j^+$ is 1-minor-balanced, so the supremum of the 1-densities of the 2-connected graphs in $\cG$ is $1+\beta_0$. Hence $\beta_{\cG}= 1+ \beta_0$ by Proposition~\ref{prop.beta-add}, and so $1+ \beta_0 \in B_2$, as required.  This completes the proof that $1+B \subseteq B_2$.
\medskip

Now let us prove that $B_2 \subseteq B'$. Let $\cA$ be an addable minor-closed family and let $\beta = \beta_{\cA}$. Let $0<\eps< \beta$. It suffices to show that there is a minor-closed class $\cA_1$ with $ \beta-\eps < \beta_{{\cA}_1} < \beta$.

For each graph $G \in \cA$ with at least one edge, we have $\rho(G)< \rho_1(G) \leq \beta$ by Proposition~\ref{prop.beta-add}.  Hence $\rho(G)< \beta$ for each $G \in \cA$. Let $G_0 \in \cA$ satisfy $\rho(G_0) > \beta-\eps$; and let $G_1$ be a minor of $G_0$ with maximum density, so that $G_1$ is minor-balanced and $\beta-\eps < \rho(G_1) < \beta$. Let ${\cA}_1$ be the class of graphs such that each component is a minor of $G_1$.  Then ${\cA}_1$ is minor-closed and $\beta_{{\cA}_1} = \rho(G_1)$, so $\cA_1$ is as required.
\smallskip

Finally, let $\beta \in B'$ with $\beta \leq 2$. By~\eqref{eqn:[0,1)} we have $\beta \in [1,2]$, and by Theorem \ref{thm:Bin[1,2]}, we see that $\beta=2$ or $\beta=2-\frac1{k}$ for some positive integer $k$.  Hence, by~\eqref{eqn:[0,1)} again, $\beta - 1 \in B$, that is $\beta \in 1 + B$.  Thus $B' \cap [0,2] \subseteq (1+B) \cap [0,2]$.  But we have already shown that $1+B \subseteq B_2 \subseteq B'$, and so the proof is complete.
\end{proofOfTheorem}

\subsection{Excluding $3$-connected minors}

By Theorem \ref{thm:addableDensities} (and recalling that $B=B_1$) we have $1+ B_1 \subseteq B_2$ . In this subsection, we `increment by~1': we show  in Proposition~\ref{prop:1+B_2inB3} that $1+ B_2 \subseteq B_3$, as mentioned near the end of Section~\ref{sec:intro}. After that, we give Proposition~\ref{prop.beta3}, which is the corresponding result for the 3-connected case to Proposition~\ref{prop.beta-add} for the addable (2-connected) case. The following, slightly technical lemma, will be the key to our proof of Proposition~\ref{prop:1+B_2inB3}.

\begin{lemma}
\label{lem:2mb3connected}
Let $\cA$ be a non-empty (finite or infinite) set of $2$-minor-balanced graphs each of which is $K_3$ or is $3$-connected, and let $\rho_2^* = \sup_{G \in \cA} \rho_2(G)$. Let $\cG$ be the closure of $\cA$ under $k$-sums for $0 \leq k \leq 2$ and under taking minors. Then $\cG$ is minor-closed and each excluded minor is $3$-connected; and $\beta_{\cG} = \rho_2^*$.
\end{lemma}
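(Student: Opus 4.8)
The plan is to prove the lemma in three parts: first that $\cG$ is minor-closed with all excluded minors $3$-connected, then the lower bound $\beta_{\cG} \geq \rho_2^*$, and finally the matching upper bound $\beta_{\cG} \leq \rho_2^*$. The first part is nearly immediate from the construction: $\cG$ is closed under taking minors by fiat, and it is proper since $\rho_2^*$ is finite (each graph in $\cA$ is $2$-minor-balanced, hence has bounded $2$-density once we fix it, but more to the point $\cG$ cannot contain all graphs as $\beta_{\cG}$ will be shown finite). To see each excluded minor is $3$-connected, I would apply the ``converse'' direction of Proposition~\ref{prop:kSums} with $t=2$: since $\cG$ is closed under $k$-sums for all $k \leq 2$ by construction, and $\cG$ is not of the form $\Ex(K_t)$ (it contains $K_3$ or a $3$-connected graph, so it contains triangles; and it is not all graphs), Proposition~\ref{prop:kSums} gives that every excluded minor for $\cG$ is $3$-connected.

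For the lower bound $\beta_{\cG} \geq \rho_2^*$: take any $G \in \cA$ with $\rho_2(G)$ close to $\rho_2^*$. Since $\cG$ is closed under $2$-sums and $\omega(G) \geq 2$ (as $G$ is $K_3$ or $3$-connected, hence has an edge, indeed a triangle), Lemma~\ref{lem.rhotnew} with $t=2$ gives $\beta_{\cG} \geq \rho_2(G)$. Taking the supremum over $G \in \cA$ yields $\beta_{\cG} \geq \rho_2^*$.

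The upper bound $\beta_{\cG} \leq \rho_2^*$ is where the main work lies. I would argue that every graph $H \in \cG$ satisfies $\rho(H) \leq \rho_2^*$, using the structure of $\cG$ as the closure of $\cA$ under $k$-sums ($k \leq 2$) and minors. The key structural claim is that every connected graph in $\cG$ is built up from copies of graphs that are minors of members of $\cA$ by taking $0$-, $1$-, and $2$-sums (after which one may delete edges, but that only decreases density). More precisely: first, taking minors of members of $\cA$ keeps $\rho_2 \leq \rho_2^*$ by the $2$-minor-balanced hypothesis; second, a $0$-sum (disjoint union) of graphs each with density $\leq \rho_2^*$ again has density $\leq \rho_2^*$ since density is a ``mediant''; third, for $1$-sums and $2$-sums one needs a gluing inequality. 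The natural statement is: if $G_1, G_2$ each have $\rho_2(G_i) \leq \rho_2^*$ and $H$ is a $k$-sum ($k \in \{1,2\}$) of $G_1, G_2$ on a $k$-clique $W$, then $\rho_2(H) \leq \rho_2^*$ — and more generally $\rho_2$ is preserved under such gluings because $e(H) - \binom{k}{2} = (e(G_1)-\binom{k}{2}) + (e(G_2)-\binom{k}{2}) + \binom{k}{2}$ and $v(H) - k = (v(G_1)-k)+(v(G_2)-k)+k$... actually one should track $\rho_2$ for $k=2$ directly: with $k=2$, $e(H) = e(G_1)+e(G_2)-1$ and $v(H) = v(G_1)+v(G_2)-2$, so $e(H) - 1 = (e(G_1)-1)+(e(G_2)-1)$ and $v(H)-2 = (v(G_1)-2)+(v(G_2)-2)+2$; the ``$+2$'' in the denominator and careful bookkeeping (handling the case $e(G_i) \leq \binom{2}{2}=1$ where $\rho_2$ is set to $0$) shows $\rho_2(H)$ is a weighted mediant of $\rho_2(G_1), \rho_2(G_2)$ plus a correction that is controlled when $\rho_2^* \geq 2 - \tfrac12$ or so; for $k \leq 1$ it is cleaner. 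One then inducts on the number of summands: any $H \in \cG$ decomposes (via its block/$2$-cut structure, using that each excluded minor is $3$-connected so $\cG$ is genuinely closed under the relevant sums) into pieces each of which is a minor of a member of $\cA$, giving $\rho_2(H) \leq \rho_2^*$ and hence $\rho(H) < \rho_2(H) \leq \rho_2^*$ by~\eqref{eqn.rho210} (or $\rho(H) \leq \rho_2(H)$ trivially). Taking the supremum over $H \in \cG$ gives $\beta_{\cG} \leq \rho_2^*$.

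The main obstacle I anticipate is making the gluing/induction for the upper bound fully rigorous: one must verify that every graph in $\cG$ really does admit a decomposition into ``atoms'' that are minors of members of $\cA$ glued along cliques of size $\leq 2$ — this requires knowing that the closure operation doesn't produce anything genuinely new beyond such tree-like gluings, which in turn leans on the fact (just established) that all excluded minors are $3$-connected, so that a graph lies in $\cG$ iff each of its ``$3$-connected components'' (torsos in the Tutte/SPQR-style decomposition) does. Care is also needed with the degenerate cases (pieces that are single vertices or single edges, where $\rho_2 = 0$ by convention) so that the mediant inequalities still go through; isolating the clean inequality ``$\rho_2$ of a $k$-sum ($k\le 2$) is at most the max of the $\rho_2$'s of the parts, provided that max is $\geq \tfrac32$'' (which holds here since $\rho_2^* \geq \rho_2(K_3) = 3 \geq \tfrac32$, or since $\cA$ is non-empty and its members have $\rho_2 \ge \rho_2(K_3)=3$ when $3$-connected, and $=3$ for $K_3$ itself) will be the technical heart.
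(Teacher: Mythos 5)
Your proposal follows essentially the same three-part route as the paper: deduce that every excluded minor for $\cG$ is $3$-connected from Proposition~\ref{prop:kSums}, get the lower bound from the $2$-sum construction (which you cleanly package as a call to Lemma~\ref{lem.rhotnew}, whereas the paper re-derives it by forming $G^k$ from $k$ copies of $G$ overlapping on an edge), and get the upper bound by building $H$ up from pieces glued along small cuts. For the upper bound, after reducing to $H$ $2$-connected, the paper counts directly: $H$ is obtained from an edge by repeatedly $2$-summing on a minor $G'$ of a member of $\cA$, and each such step adds $v(G')-2$ vertices and at most $e(G')-1 \leq \rho_2(G')(v(G')-2) \leq \rho_2^*(v(G')-2)$ edges by the $2$-minor-balance hypothesis, giving $e(H)\le 1+\rho_2^*(v(H)-2)<\rho_2^*v(H)$. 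This is equivalent to your $\rho_2$-mediant argument once you fix a small arithmetic slip: in a $2$-sum of $G_1,G_2$ one has $v(H)-2=(v(G_1)-2)+(v(G_2)-2)$ exactly --- there is no stray $+2$ in the denominator --- so $\rho_2(H)$ is literally the mediant of $\rho_2(G_1)$ and $\rho_2(G_2)$ and no correction term is needed; you were bracing for a difficulty that isn't there. The one genuine gap you flag --- that every $2$-connected $H\in\cG$ really is an iterated $2$-sum of minors of members of $\cA$ --- is shared with the paper, which simply asserts it; it rests on the standard fact (essentially what is proved in the forward direction of Proposition~\ref{prop:kSums}) that a $3$-connected minor of a $k$-sum with $k\le 2$ is a minor of one of the summands.
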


\begin{proof}
The fact that $\cG$ is minor-closed is immediate from the definition of $\cG$. Also, since clearly $\cG$ is not $\Ex(K_t)$ for some $t \leq 3$, it follows by Proposition~\ref{prop:kSums} that every excluded minor $M$ for $\cG$ is $3$-connected. It remains to show that $\beta_{\cG} = \rho_2^*$; but let us first show that $\rho_2^*>1$.  Let $G_1 \in \cA$: then $e(G_1) \geq v(G_1)$, and so
\[
 \rho_2^* \geq \rho_2(G_1) = \frac{e(G_1)-1}{v(G_1)-2} \geq \frac{v(G_1)-1}{v(G_1)-2} > 1. %= 1+\frac{1}{v(G_1)-2} > 1.
\]

Next we show $\beta_{\cG} \leq \rho_2^*$. Let $H \in \cG$.  We want to show that $\rho(H) < \rho_2^*$. It suffices to consider the case when $H$ is connected, and $\rho(H)>1$ (since $\rho_2^*>1$). Further, we may assume that $H$ is $2$-connected. Otherwise, taking $u$ to be a cut-vertex of $H$ and $v,w$ to be neighbours of $u$ in different components of $H-u$, by identifying $v$ and $w$ into one vertex we could form a graph that is in $\cG$ (as every forbidden minor is $3$-connected), and has one less vertex and one less edge, and thus a strictly larger $\rho$ value. Now $H$ may be obtained by starting with an edge and repeatedly forming a 2-sum with a minor of a graph in $\cA$. Thus for each minor $G^\prime$ of some $G \in \cA$ added, if we add $t$ vertices then we add at most $\rho_2(G^\prime) t \leq \rho_2(G) t \leq \rho_2^* \, t$ edges (this follows from the assumption that every graph $G \in \cA$ is $2$-minor-balanced). Hence if we add a total of $t$ vertices then we add at most $\rho_2^* \, t$ edges. Hence
\[
e(H) \leq 1 + \rho_2^* \, t < \rho_2^* \, (t+2) = \rho_2^* \, v(H).
\]
This gives $\rho(H) < \rho_2^*$, as desired. We have now seen that $\beta_{\cG} \leq \rho_2^*$.

For each $G \in \cA$, $k \geq 1$, if we form $G^k \in \cG$ from $k$ copies of $G$ overlapping in a single edge then
\[
\rho(G^k) = \frac{1+k(e(G)-1)}{2+k(v(G)-2)}, 
\]
so $\rho(G^k)$ increases to $\rho_2(G)$ as $k \to \infty$. Recalling that $\rho_2^* = \sup_{G \in \cA} \rho_2(G)$, we see that also $\rho_2^* = \sup_{G \in \cA, k \geq 1} \rho(G^k)$.  Hence $\beta_{\cG} = \rho_2^*$, and this value is in $B_3$.
\end{proof}

\begin{prop}
\label{prop:1+B_2inB3}
For the sets $B_2$ and $B_3$ as defined in~\eqref{eqn:tConnectedDensities} we have
\[
 1+B_2 \subseteq B_3. 
\]
\end{prop}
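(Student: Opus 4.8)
The plan is to mimic the structure of the proof that $1+B \subseteq B_2$ from Theorem~\ref{thm:addableDensities}, but incrementing the connectivity by one and using Lemma~\ref{lem:2mb3connected} in place of Proposition~\ref{prop.beta-add}. Let $\beta_0 \in B_2$, and pick an addable minor-closed class $\cA_0$ with $\beta_{\cA_0}=\beta_0$. Since $1 \in B_2$ and $1+1=2 \in B_3$ (take $\cA = \Ex(K_4)$, the series-parallel graphs), we may assume $\beta_0 > 1$. By Proposition~\ref{prop.beta-add}, $\beta_0$ is the supremum of $\rho_1(G)$ over the $2$-connected graphs $G \in \cA_0$ (note $K_3 \in \cA_0$ since $\beta_0 > 1$).

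First I would handle the case where the supremum is attained: there is a $2$-connected $G \in \cA_0$ with $\rho_1(G) = \beta_0$. Such a $G$ need not be $1$-minor-balanced, but I can replace it by a $1$-minor-balanced graph with the same $1$-density: take a minor of $G$ maximising $\rho_1$. By Lemma~\ref{lem:1minbal}(a),(b) this minor is $2$-connected (it is not $K_2$ since $\rho_1 = \beta_0 > 1$). So assume $G$ itself is $2$-connected and $1$-minor-balanced. Then by Lemma~\ref{lem:G+strictly(t+1)Balanced} (with $t=1$), the complete one-vertex extension $G^+$ is $2$-minor-balanced; and $G^+$ is $3$-connected because adding a universal vertex to a $2$-connected graph yields a $3$-connected graph. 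Now apply Lemma~\ref{lem:2mb3connected} with $\cA = \{G^+\}$: the class $\cG$ it produces is minor-closed with all excluded minors $3$-connected, and $\beta_{\cG} = \rho_2(G^+)$. By Lemma~\ref{lem:tDensitiesRelation}, $\rho_2(G^+) = \rho_1(G) + 1 = \beta_0 + 1$. Hence $\beta_0 + 1 \in B_3$.

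For the case where the supremum is not attained, take a sequence of $2$-connected graphs $G_j \in \cA_0$ with $\rho_1(G_j)$ strictly increasing to $\beta_0$; replacing each $G_j$ by a $\rho_1$-maximal minor, I may assume each $G_j$ is $1$-minor-balanced and $2$-connected (using Lemma~\ref{lem:1minbal} again, and noting $\rho_1(G_j) > 1$ for $j$ large, which suffices). Then each $G_j^+$ is $3$-connected and $2$-minor-balanced, and $\rho_2(G_j^+) = \rho_1(G_j) + 1$ strictly increases to $\beta_0 + 1$. Apply Lemma~\ref{lem:2mb3connected} with $\cA = \{G_j^+ : j \geq 1\}$, so $\rho_2^* = \sup_j \rho_2(G_j^+) = \beta_0 + 1$, giving a minor-closed class $\cG$ with all excluded minors $3$-connected and $\beta_{\cG} = \beta_0 + 1$. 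Thus $\beta_0 + 1 \in B_3$ in all cases, and $1 + B_2 \subseteq B_3$.

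The main obstacle I anticipate is the passage from "$G$ has $\rho_1(G) = \beta_0$" to "$G$ may be taken $1$-minor-balanced and $2$-connected" — one must check that a $\rho_1$-maximal minor of a graph with $\rho_1 > 1$ is genuinely $2$-connected and not just $K_2$, which is exactly what Lemma~\ref{lem:1minbal}(a),(b) delivers, and that such a maximal minor exists (it does, since a graph has only finitely many minors up to isomorphism). A minor subtlety worth stating explicitly is the base case $\beta_0 = 1$: the class of series-parallel graphs has $\beta = 2$ and all excluded minors ($K_4$) $2$-connected — in fact $3$-connected — so $2 \in B_3$, covering $1 + 1$. Everything else is a routine transcription of the $B_2$ argument with Lemma~\ref{lem:2mb3connected} substituted for Proposition~\ref{prop.beta-add}.
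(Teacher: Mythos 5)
Your proposal follows essentially the same path as the paper's proof: reduce to $\beta_0>1$, split on whether the supremum in Proposition~\ref{prop.beta-add}(c) is attained, form $G^+$, use Lemmas~\ref{lem:G+strictly(t+1)Balanced} and~\ref{lem:tDensitiesRelation} to get a $3$-connected, $2$-minor-balanced graph with $\rho_2 = \beta_0+1$, and feed the result(s) into Lemma~\ref{lem:2mb3connected}. Two small points. First, in the attained case your replacement step is unnecessary: since by Proposition~\ref{prop.beta-add}(a) we have $\beta_0 = \sup\{\rho_1(H) : H \in \cA_0\}$ and every minor of $G$ lies in $\cA_0$, a $2$-connected $G \in \cA_0$ with $\rho_1(G)=\beta_0$ is automatically $1$-minor-balanced — this is exactly the paper's observation, and it avoids disturbing $2$-connectedness. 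Second, in the non-attained case, where the replacement really is needed, there is a small technical gap: a $\rho_1$-maximising minor is $1$-minor-balanced but need not be \emph{strictly} $1$-minor-balanced, and Lemma~\ref{lem:1minbal}(b) is stated only for the strict case, so you cannot directly conclude $2$-connectedness. This is easy to repair — either take a vertex-minimal $\rho_1$-maximising minor (which is strictly $1$-minor-balanced), or pass from the $1$-minor-balanced connected minor $H_j$ to a block $B_j$ of $H_j$ attaining $\max_B \rho_1(B)$ (then $\rho_1(B_j)=\rho_1(H_j)$ by the convex-combination identity for $\rho_1$, $B_j$ inherits $1$-minor-balance, and it is $2$-connected once $\rho_1(B_j)>1$). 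With that patch the argument matches the paper's.
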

\begin{proof}
Let $\beta \in B_2$: we must show that $1+ \beta \in B_3$. Recall that $B_2 \cap [0,1)=\emptyset$.  Since $K_4$ is 3-connected and $\beta_{\Ex(K_4)}=2$, we see that $2$ is in $B_3$: so we may assume that $\beta>1$. Let $\cA$ be an addable minor-closed class of graphs with $\beta_{\cA}=\beta$. By Proposition~\ref{prop.beta-add}, $\beta$ is the supremum of $\rho_1(G)$ over the $2$-connected graphs in $\cA$.

Assume first that there is a 2-connected graph $G_0 \in \cA$ with $\rho_1(G_0) = \beta$. Observe that $G_0$ must be 1-minor-balanced. Let $G_1 = G_0^+$. Then $G_1$ is $3$-connected and, by Lemmas~\ref{lem:G+strictly(t+1)Balanced} and~\ref{lem:tDensitiesRelation}, it is a $2$-minor-balanced graph with $\rho_2(G_1) = 1+ \rho_1(G_0) = 1+ \beta$. Hence by Lemma~\ref{lem:2mb3connected}, taking our set of $2$-minor-balanced and $3$-connected graphs to be $\cA = \{G_1\}$ (which gives $\rho_2^* = \rho_2(G_1)$), we see that $1+ \beta$ is in $B_3$.

Now, assume that there is no 2-connected graph $G$ in $\cA$ with $\rho_1(G) = \beta$. Then there is a sequence of $2$-connected and $1$-minor-balanced graphs $\{H_i\}_{i \geq 1}$ with $\rho_1(H_i) \nearrow \beta$ as $i \to \infty$. We proceed again using Lemma~\ref{lem:2mb3connected}, with $\cA = \{H_1^+, H_2^+, \ldots \}$ as our set of $3$-connected and $2$-minor-balanced graphs: we obtain
\[
 1 + \beta = 1+ \sup_{i \geq 1} \rho_1(H_i) = \sup_{i \geq 1} \rho_2(H_i^+) = \rho_2^* \in B_3
\]
and we are done.
\end{proof}
Perhaps we have the strict containment $1+B_2 \subsetneq B_3$ above? We shall use the following lemma for strictly 2-minor-balanced graphs, corresponding to part (\ref{case:1minbal2}) of Lemma~\ref{lem:1minbal}, in the proof of Proposition~\ref{prop.beta3}.
\begin{lemma} \label{lem.2minbal}
Let $v(G) \geq 5$ and let  $G$ be strictly 2-minor-balanced. Then $G$ is 3-connected.
\end{lemma}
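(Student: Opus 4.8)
The plan is to argue by contradiction, mirroring the structure of part~(\ref{case:1minbal2}) of Lemma~\ref{lem:1minbal} but one connectivity level up. Suppose $G$ is strictly $2$-minor-balanced with $v(G) \geq 5$ but is not $3$-connected. Since $G$ is strictly $2$-minor-balanced we have $\rho_2(G)>0$, and $G$ is connected (an isolated vertex $v$ would give $\rho_2(G-v)\geq\rho_2(G)$, and a disconnection into parts $U_1, U_2$ with $|U_i|\geq 2$ — forced because $\rho_2(G)>0$ needs enough edges on each side after one bad split, here one needs a short argument that each part has at least $2$ vertices — would contradict $2$-minor-balance exactly as in part~(\ref{case:1minbal1}) of Lemma~\ref{lem:1minbal}). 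A similar splitting argument at a cut-vertex shows $G$ is $2$-connected: if $V(G)=U_1\cup U_2$ with $|U_1\cap U_2|=1$ and $U_1\setminus U_2, U_2\setminus U_1$ both non-empty and no edges between them, then with $G_{U_i}=G[U_i]$ one has $v(G)-2 = (v(G_{U_1})-2)+(v(G_{U_2})-2)+1$ but $e(G)=e(G_{U_1})+e(G_{U_2})$, so $\rho_2(G)$ is strictly less than a convex-type combination of $\rho_2(G_{U_1})$ and $\rho_2(G_{U_2})$, hence strictly less than $\max_i \rho_2(G_{U_i})$, contradicting strict $2$-minor-balance (here the ``$-2$'' in the $2$-density makes the denominators add up with a deficit of $1$, which is exactly what forces the strict inequality — this is the place to be slightly careful, but it goes through since each $|U_i|\geq 3$, which in turn follows because $G$ is $2$-connected with $v(G)\geq 5$).

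So we may assume $G$ is $2$-connected but not $3$-connected. Then there is a $2$-cut: vertex sets $U_1, U_2$ with $U_1\cup U_2=V(G)$, $U_1\cap U_2=\{u,w\}$, and $U_1\setminus U_2$, $U_2\setminus U_1$ both non-empty with no edges between them. Since $v(G)\geq 5$ we may choose the cut so that, say, $|U_1\setminus U_2|\geq 2$ (at least one side has $\geq 2$ interior vertices); more useful will be to note both $|U_i|\geq 3$. As in the proof of Proposition~\ref{prop:kSums} (the $t=2$ converse), since $G$ is $2$-connected there is a $u$--$w$ path inside $U_2$, so contracting $U_2\setminus\{u,w\}$ down to turn the $u$--$w$ connection into an edge (or, if $uw$ is already an edge, simply deleting $U_2\setminus\{u,w\}$) produces a minor $H_1$ of $G$ with vertex set essentially $U_1$ together with the edge $uw$; symmetrically we get $H_2$ on $U_2\cup\{uw\}$. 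Then $G$ (or $G$ with $uw$ added, which has the same or larger density and is also a minor-closed-admissible modification) is the $2$-sum of $H_1$ and $H_2$ along $\{u,w\}$. Writing $e_i, v_i$ for the edge and vertex counts of $H_i$, we have $v(G)=v_1+v_2-2$ and $e(G)\le e_1+e_2$ (equality unless $uw$ was counted twice), and one computes
\[
\rho_2(G) \;=\; \frac{e(G)-1}{v(G)-2} \;\leq\; \frac{(e_1-1)+(e_2-1)}{(v_1-2)+(v_2-2)} \;\leq\; \max\{\rho_2(H_1),\rho_2(H_2)\}.
\]
Both $H_1$ and $H_2$ are proper minors of $G$ (each has strictly fewer vertices, since the opposite side contributes at least one vertex beyond $\{u,w\}$), so $\rho_2(G)>\rho_2(H_i)$ for $i=1,2$ by strict $2$-minor-balance, contradicting the displayed inequality.

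The routine-but-must-be-watched points are: (i) checking $\rho_2(H_i)>0$ so that the $2$-density formula applies rather than the degenerate value $0$ — this needs $e_i > 1$, which follows because $H_i$ contains the edge $uw$ plus at least the edges of a connected graph on the $\geq 3$ vertices of $U_i$; (ii) the bookkeeping with the ``$-1$'' and ``$-2$'' constants, which is what makes the two-term combination behave like a weighted average and yields ``$\leq\max$''; and (iii) the case distinction on whether $uw\in E(G)$, handled uniformly by allowing the $2$-sum to be $G$ or $G+uw$ and noting adding $uw$ only helps. I expect step~(ii) — verifying the inequality $\tfrac{(e_1-1)+(e_2-1)}{(v_1-2)+(v_2-2)} \leq \max_i \rho_2(H_i)$ and that it is consistent with strictness — to be the one requiring the most care, but it is a short mediant-inequality computation once the $2$-sum decomposition is set up; the genuine content is the same splitting philosophy already used for Lemma~\ref{lem:1minbal}, lifted to $2$-densities and $3$-connectivity.
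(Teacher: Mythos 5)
Your approach is essentially the paper's: for a cut $U_1 \cup U_2 = V(G)$ with $|U_1 \cap U_2| = i \le 2$, form the two pieces (induced subgraphs $G[U_i]$ for $i \le 1$, contracted minors $H_i$ for $i = 2$), and derive $\rho_2(G) \le \max_i \rho_2(\cdot)$ by a mediant-type inequality, contradicting strict $2$-minor-balance. You stage the argument by connectivity level $1$, $2$, $3$ while the paper treats $i \in \{0,1\}$ together and then $i=2$ separately, but the decomposition and the key inequality are the same.

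Two points need attention. First, your $i=1$ paragraph is circular: you justify $|U_i| \ge 3$ ``because $G$ is $2$-connected,'' which is precisely what that paragraph is trying to prove --- and you genuinely need $|U_i| \ge 3$, since otherwise $v(G_{U_i}) - 2 = 0$ and the mediant computation breaks down; the same issue touches your $i=0$ step, where $|U_i| \ge 2$ is not enough. The paper closes this by first showing $G$ has no isolated vertices and is not a matching, so some component $C$ has $v(C) \ge 3$, and strict $2$-minor-balance then forces $\rho_2(G) \ge \rho_2(C) \ge 1$; with $\rho_2(G) \ge 1$, deleting a leaf cannot decrease $\rho_2$, so $G$ has no leaves; and no isolated vertices plus no leaves gives $|U_1|, |U_2| \ge 3$ for every cut with $i \le 2$. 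The bound $\rho_2(G) \ge 1$ (i.e.\ $e(G) \ge v(G)-1$) is also exactly what makes the deficit-of-one mediant step work for $i \in \{0,1\}$, which you correctly flag as the delicate point. Second, a small bookkeeping slip in your $i=2$ step: you state $e(G) \le e_1 + e_2$, but the displayed inequality actually requires $e(G) \le e_1 + e_2 - 1$ --- which does hold, because $uw$ is counted twice in $e_1+e_2$ and at most once in $e(G)$, but the weaker form you wrote would not suffice. With the no-leaves step inserted at the start, your plan matches the paper's proof.
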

\begin{proof}
Clearly $G$ has no isolated vertices, and $G$ cannot be a matching (since $\rho_2(jK_2)=1/2$ for each $j \geq 2$). Hence $G$ has a connected component $C$ with at least $3$ vertices, and since $G$ is (strictly) 2-minor-balanced we have 
\[
 \rho_2(G) \geq \rho_2(C) = \frac{e(C)-1}{v(C)-2} \geq 1.
\]
It follows that $G$ cannot have a leaf $v$, for if so then 
\[
 \rho_2(G\!-\!v) = \frac{(e(G)-1)-1}{(v(G)-2)-1} \geq \frac{e(G)-1}{v(G)-2} = \rho_2(G).
\]
Suppose that $U_1$ and $U_2$ satisfy $U_1 \cup U_2=V(G)$, $e(U_1 \backslash U_2, U_2 \backslash U_1)=\emptyset$ and $|U_1 \cap U_2| = i \leq 2$.  Then $|U_1|, |U_2| \geq 3$ by the above.  Let $G_{U_1}$ denote the subgraph of $G$ induced on $U_1$, and similarly for $G_{U_2}$. Assume first that $i$ is 0 or 1.  Then, since $\rho_2(G) \geq 1$,
\[
 \rho_2(G) = \tfrac{e(G)-1}{v(G)-2} \leq \tfrac{e(G_{U_1})+e(G_{U_2})-1}{|U_1|+|U_2| - 3} \leq \tfrac{(e(G_{U_1})-1)+(e(G_{U_2})-1)}{(v(G_{U_1})-2)+(v(G_{U_2}) - 2)} \leq \max\{\rho_2(G_{U_1}), \rho_2(G_{U_2})\},
\]
which contradicts $G$ being strictly 2-minor-balanced.  Now assume that $i=2$, say $U_1 \cap U_2 = \{a,b\}$.  Let $H_{U_1}$ denote the minor of $G$ obtained by contracting all of $U_2 \setminus \{a,b\}$ onto $a$, and similarly for $H_{U_2}$.  Then $a$ and $b$ are adjacent in both $H_{U_1}$ and $H_{U_2}$.  Much as above we have
\[ 
 \rho_2(G) = \tfrac{e(G)-1}{v(G)-2} \leq \tfrac{e(H_{U_1})+e(H_{U_2})-2}{v(H_{U_1})+v(H_{U_2}) - 4} = \tfrac{(e(H_{U_1})-1)+(e(H_{U_2})-1)}{(v(H_{U_1})-2)+(v(H_{U_2}) - 2)} \leq \max\{\rho_2(H_{U_1}), \rho_2(H_{U_2})\},
\]
which again contradicts $G$ being strictly 2-minor-balanced.
\end{proof}

The final result in this subsection corresponds to Proposition~\ref{prop.beta-add}, and singles out the class $\Ex(K_4)$ of series-parallel graphs. Recall that each graph in this class has a vertex of degree at most 2, so there are no 3-connected graphs in the class. (Equivalently, every 3-connected graph has a minor $K_4$.)

\begin{prop} \label{prop.beta3}
Let the minor-closed class $\cA$ of graphs have 3-connected excluded minors.
Then
\begin{enumerate}[(a)]
 \item \label{item:prop.beta31} $\beta_{\cA} = \sup \{ \rho_2(G): G \in \cA\}$;
 \item \label{item:prop.beta32} if $K_4 \not\in \cA$, then $\cA$ is the class $\Ex(K_4)$ of series-parallel graphs, $\beta_{\cA}=2$, and $\rho_2(G)=2$ for each edge-maximal graph $G \in \cA$ with $v(G) \geq 3$; and
 \item \label{item:prop.beta33} if  $K_4 \in \cA$, then $ \beta_{\cA} = \sup \{ \rho_2(G): G \in \cA,  \, G \mbox{ is 3-connected} \}$.
\end{enumerate}
\end{prop}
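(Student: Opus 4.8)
The plan is to mimic the proof of Proposition~\ref{prop.beta-add}, substituting $\rho_2$ for $\rho_1$, $3$-connectedness for $2$-connectedness, and $K_4$ for $K_3$, with the extra ingredient that $\cA$ is now closed under $k$-sums for $k \le 2$ (by Proposition~\ref{prop:kSums}, using that $\cA$ is not of the form $\Ex(K_t)$ — this requires a tiny bit of care, see below). First I would dispatch part (\ref{item:prop.beta32}): if $K_4 \notin \cA$ then, since every $3$-connected graph has a $K_4$-minor, the unique excluded minor is $K_4$ and $\cA = \Ex(K_4)$ is the class of series-parallel graphs; it is classical that each edge-maximal series-parallel graph on $n \ge 2$ vertices has $2n-3$ edges (cited already in the introduction via \cite{purityArxiv}), so $\rho_2(G) = (2n-3-1)/(n-2) = 2$ for each edge-maximal $G$ with $v(G)\ge 3$, giving $\beta_{\cA} = 2$. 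This also establishes (\ref{item:prop.beta31}) in the case $\cA = \Ex(K_4)$.

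For part (\ref{item:prop.beta33}) (which then also yields (\ref{item:prop.beta31}) in the remaining case $K_4 \in \cA$), the upper bound is the chain
\[
 \beta_{\cA} \le \sup\{\rho_2(G): G \in \cA,\ G \text{ is 3-connected}\} \le \sup\{\rho_2(G): G \in \cA\},
\]
where the right-hand inequality is trivial and the left-hand one needs an analogue of~\eqref{eqn.rho1}: for any connected graph $G$ with $\rho(G) > 1$ there is a $3$-connected minor $B$ of $G$ (or $B = K_4$, but $K_4 \in \cA$ here) with $\rho(G) < \rho_2(B)$. One gets this by the decomposition of $2$-connected graphs into $3$-connected pieces, edges and cycles via $2$-sums (Tutte), expressing $\rho_2(G)$ of a $2$-connected $G$ as a convex combination of the $\rho_2$ of its $3$-connected "torsos" and checking cycles contribute $\rho_2 = 1$; alternatively, one can argue directly as in Lemma~\ref{lem:2mb3connected}: any $2$-connected $G \in \cA$ is built from an edge by successive $2$-sums with $3$-connected graphs (or triangles), and a short averaging argument over those pieces shows $\rho(G) < \max_B \rho_2(B)$. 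For the matching lower bound, fix any $3$-connected $G \in \cA$ with $\omega(G) \ge 2$; since $\cA$ is closed under $2$-sums (Proposition~\ref{prop:kSums}), Lemma~\ref{lem.rhotnew} with $t = 2$ gives $\beta_{\cA} \ge \rho_2(G)$. Taking the supremum over such $G$ closes the loop and proves equality throughout. Part (\ref{item:prop.beta31}) in the case $K_4 \in \cA$ then follows since, by the upper-bound chain and part (\ref{item:prop.beta33}), the two suprema over all $G \in \cA$ and over $3$-connected $G \in \cA$ coincide with $\beta_{\cA}$.

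The main obstacle I anticipate is the "left-hand inequality" above — i.e. reducing the supremum of ordinary densities to the supremum of $\rho_2$ over $3$-connected members of $\cA$. Unlike the addable case, where the block decomposition makes $\rho_1$ literally a convex combination of block $1$-densities, here one must handle the $2$-sum/$3$-block structure of $2$-connected graphs, and be careful that the $3$-connected pieces (and the triangles, which have $\rho_2 = 1$, and cut-edges/cycles) are themselves in $\cA$ — which holds because $\cA$ is minor-closed and the pieces are minors of $G$. A secondary subtlety is the invocation of Proposition~\ref{prop:kSums}: it requires $\cA \ne \Ex(K_t)$, so strictly speaking one should first observe that the only relevant $\Ex(K_t)$ with $3$-connected excluded minor is $\Ex(K_4)$, already handled in part (\ref{item:prop.beta32}) (where $\Ex(K_4)$ is trivially closed under $k$-sums for all $k$ anyway), and that $\Ex(K_3)$ has no $3$-connected excluded minor; so in part (\ref{item:prop.beta33}) we may assume $\cA$ is not of this form and Proposition~\ref{prop:kSums} applies, giving closure under $0$-, $1$- and $2$-sums. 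Everything else is routine bookkeeping with the definition~\eqref{eq:tDensity} of $\rho_2$.
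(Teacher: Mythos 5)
Your proposal is correct in outline, but it takes a genuinely different route from the paper's for the harder direction, and it is worth comparing. The paper first proves part~(\ref{item:prop.beta31}) directly and cheaply: the lower bound on $\sup\{\rho_2(G): G \in \cA\}$ comes from Lemma~\ref{lem.rhotnew} exactly as you say, while the upper bound uses only the elementary density ordering $\rho(G) \leq \rho_2(G)$ for connected $G$ with $v(G) \geq 3$ (equation~\eqref{eqn.rho210}, a special case of Proposition~\ref{prop:densitiesOrder}); no structural decomposition is needed at this stage. It then reduces part~(\ref{item:prop.beta33}) to part~(\ref{item:prop.beta31}) by observing that the supremum of $\rho_2$ over $\cA$ may be restricted to $K_4$ or to \emph{strictly 2-minor-balanced} graphs on at least $5$ vertices (by passing to a $\rho_2$-maximising minor), and invoking Lemma~\ref{lem.2minbal}, which is proved just before the proposition and asserts that such graphs are $3$-connected. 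You instead go straight for the restriction to $3$-connected members via Tutte's $2$-sum decomposition of $2$-connected graphs into $3$-connected torsos and cycles. That route can be made to work — the torsos with their virtual edges are indeed minors of $G$, and a $2$-sum preserving the hinge edge yields $\rho_2(G)$ as a convex combination of the $\rho_2$ of the summands (and an inequality in the right direction when the hinge edge is deleted) — but it invokes heavier external machinery where the paper's Lemma~\ref{lem.2minbal} achieves the same with a short self-contained balancing argument in the style of Lemma~\ref{lem:1minbal}(\ref{case:1minbal2}). Your approach also bypasses the simple observation $\rho \leq \rho_2$ that makes part~(\ref{item:prop.beta31}) essentially free. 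One small factual slip: cycles do \emph{not} have $\rho_2 = 1$; rather $\rho_2(C_n) = (n-1)/(n-2) \in (1,2]$, with $\rho_2(C_3) = 2$. This does not harm your argument, since $\rho_2(K_4) = 5/2 > 2$ and $K_4 \in \cA$ in part~(\ref{item:prop.beta33}), but the assertion as written is wrong. Your caution about the hypothesis of Proposition~\ref{prop:kSums} is reasonable, though harmless: the paper notes just before that proposition that every class $\Ex(K_t)$ is closed under all $k$-sums anyway, so the conclusion of Lemma~\ref{lem.rhotnew} is available in every case.
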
 
\begin{proof}
We shall deduce part (\ref{item:prop.beta31}) from parts (\ref{item:prop.beta32}), (\ref{item:prop.beta33}) and their proofs.

(\ref{item:prop.beta32}) Suppose that  $K_4 \not\in \cA$. Since every 3-connected graph has a minor $K_4$, it follows that $\cA$ is $\Ex(K_4)$. We have already seen that $\beta_{\cA}=2$; and, for each edge-maximal graph $G \in \cA$ with $v(G) =n \geq 3$, we have $\rho_2(G)= \frac{(2n-3)-1}{n-2} =2$. Thus in particular part (\ref{item:prop.beta31}) holds for $\cA$.

(\ref{item:prop.beta33}) Since  $\cA$ is closed under 2-sums by Proposition~\ref{prop:kSums}, we see by Lemma~\ref{lem.rhotnew}  that $\beta_{\cA} \geq \sup \{ \rho_2(G): G \in \cA\}$. Clearly $\beta_{\cA} \geq 1$; and by~\eqref{eqn.rho210}, for a connected graph $G$ with $v(G) \geq 3$ we have $\rho(G) \leq \rho_2(G)$. Hence 
\[
\beta_{\cA} = \sup \{ \rho(G): G \in \cA, \, G \mbox{ connected} \} \leq \sup \{ \rho_2(G): G \in \cA\},
\]
and so part (\ref{item:prop.beta31}) holds for $\cA$. In the supremum in part (\ref{item:prop.beta31}) we may clearly restrict either to $G=K_4$, or to graphs $G \in \cA$ which are strictly 2-minor-balanced with $v(G) \geq 5$. Lemma~\ref{lem.2minbal} now completes the proof.
\end{proof}

\begin{remark}
\label{rem:betaT}
Propositions~\ref{prop.beta-add} and~\ref{prop.beta3} single out the class $\Ex(K_3)$ of forests for $t = 2$, and the class $\Ex(K_4)$ of series-parallel graphs for $t = 3$. An analogous result for $t=1$, singling out the (unexciting) class $\Ex(K_2)$ of edgeless graphs, can be obtained immediately. However, there is no matching result for $t = 4$. To see this, let $H$ be a 4-connected planar graph, for example $K_{2,2,2}$ (which may also be described as $K_6$ minus a perfect matching). If $\cA = \Ex(\{ K_5 , H \})$, then $K_5 \notin \cA$ but $\cA \neq \Ex(K_5)$.
\end{remark}

\subsection{Excluding highly connected minors}
\label{sec:higlyConnected}

Recall that the sets $B_t$ were defined in \eqref{eqn:tConnectedDensities}. In previous subsections we considered the values $\beta_{\cA}$ when each excluded minor for $\cA$ is $2$-connected or $3$-connected, and the corresponding sets $B_2$ and $B_3$.  In this subsection we consider larger values of $t$, and investigate the values $\beta_{\cA}$ when each excluded minor for $\cA$ is $t$-connected, and how the sets $B_t$ behave. The first proposition gives a lower bound on $\beta_{\cA}$ whenever each excluded minor is $t$-connected.
\begin{prop}
\label{prop:lbnew}
Let $t \geq 1$, and let $\cA$ be a minor-closed class such that each excluded minor is $t$-connected and has at least $h \geq t+1$ vertices. Then
 \[
  \beta_{\cA} \geq \frac{h+t-3}{2}.
 \]
\end{prop}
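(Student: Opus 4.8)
The plan is to find graphs in $\cA$ whose density approaches $\tfrac{h+t-3}{2}$ by gluing many copies of $K_{h-1}$ along a common clique, and to recognise this limit as the $(t-1)$-density of $K_{h-1}$.

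First I would note that $K_{h-1}\in\cA$: every excluded minor for $\cA$ has at least $h$ vertices, hence cannot be a minor of the $(h-1)$-vertex graph $K_{h-1}$, so $K_{h-1}$ is not itself an excluded minor and therefore lies in $\cA$. Next I would argue that $\cA$ is closed under $(t-1)$-sums. If $\cA=\Ex(K_s)$ for some $s$ this is immediate, since such a class is closed under $k$-sums for every $k\ge 0$; otherwise it follows from Proposition~\ref{prop:kSums} applied with its parameter equal to $t-1$, so that its hypothesis that each excluded minor is $(t+1)$-connected becomes exactly our hypothesis that each excluded minor is $t$-connected. (For $t=1$ this merely says $\cA$ is decomposable, which holds because each excluded minor, being $1$-connected, is connected.)

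Having these two facts, I would apply Lemma~\ref{lem.rhotnew} with $G=K_{h-1}$ and sum-parameter $t-1$: since $\omega(K_{h-1})=h-1\ge t-1$ (indeed $h\ge t+1$), the hypotheses are met and we conclude $\beta_\cA\ge\rho_{t-1}(K_{h-1})$. It then remains to evaluate this $(t-1)$-density. Since $e(K_{h-1})=\binom{h-1}{2}>\binom{t-1}{2}$ (because $h>t$), we have
\[
\rho_{t-1}(K_{h-1})=\frac{\binom{h-1}{2}-\binom{t-1}{2}}{(h-1)-(t-1)}
=\frac{(h-1)(h-2)-(t-1)(t-2)}{2(h-t)}=\frac{h+t-3}{2},
\]
using the identity $(h-1)(h-2)-(t-1)(t-2)=(h^2-t^2)-3(h-t)=(h-t)(h+t-3)$. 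This yields the stated bound.

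The argument is short; the only thing needing care is matching the indices between the cited results (Proposition~\ref{prop:kSums} is phrased for $k$-sums and $(k+1)$-connectivity, Lemma~\ref{lem.rhotnew} for $t$-sums and $t$-densities) and the present hypothesis on $t$-connected excluded minors, together with checking the $\Ex(K_s)$ case of Proposition~\ref{prop:kSums} separately. If one prefers to avoid these citations, the same conclusion follows directly: let $G_k$ consist of $k$ copies of $K_{h-1}$ sharing a common $(t-1)$-clique $W$; if some excluded minor $M$ were a minor of $G_k$, it would be $t$-connected with $|V(M)|\ge t+1$, yet $G_k-W$ is a disjoint union of $k$ cliques $K_{h-t}$ with no edges between distinct copies, so the at most $t-1$ branch sets meeting $W$ cannot separate $M$, forcing all branch sets into a single copy and hence $M$ to be a minor of $K_{h-1}$ --- a contradiction; thus $G_k\in\cA$ and $\rho(G_k)\to\rho_{t-1}(K_{h-1})=\tfrac{h+t-3}{2}$ as $k\to\infty$.
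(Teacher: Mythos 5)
Your proof is correct and takes essentially the same approach as the paper: both establish that $\cA$ is closed under $(t-1)$-sums via Proposition~\ref{prop:kSums} and then let the density of $m$ copies of $K_{h-1}$ glued along a common $(t-1)$-clique tend to $\rho_{t-1}(K_{h-1})=\tfrac{h+t-3}{2}$, which you package through Lemma~\ref{lem.rhotnew} while the paper carries out the elementary computation inline. A small point in your favour: you explicitly handle the $\Ex(K_s)$ case that is excluded from the hypotheses of Proposition~\ref{prop:kSums}, a caveat the paper's proof passes over in silence, and your closing self-contained verification that the glued graphs lie in $\cA$ is a useful sanity check.
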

\begin{proof}
By Proposition~\ref{prop:kSums}, $\cA$ is closed under $k$-sums for each $k \leq t-1$. Hence the current proposition follows immediately from the following construction of a graph $G \in \cA$. We take $m$ copies of $K_{h-1}$ that all overlap in a fixed set of $t-1$ vertices. Clearly $G$ has $(h-t) m+t-1$ vertices and
\[
\begin{split}
e(G) & = m \left ( \binom{h-t}{2} + (h-t)(t-1) \right ) + \binom{t-1}{2} \\
     & = (h-t) m \frac{h+t-3}{2} + \binom{t-1}{2}
\end{split}
\]
edges, so we have
\[
 \beta_{\cA} = \sup_{G \in \cA} \frac{e(G)}{v(G)} \geq \sup_{m \geq 1} \frac{(h-t) m \frac{h+t-3}{2} + \binom{t-1}{2}}{(h-t) m+t-1} = \frac{h+t-3}{2},
\]
as required.
\end{proof}

Let $\cA$ be a proper minor-closed class of graphs with set $\cH$ of excluded minors, such that each graph in $\cH$ is $t$-connected. By the last result with $h=t+1$, we have $\beta_{\cA} \geq t-1$; and by the last result with $h=t+2$, if $K_{t+1}$ is not in $\cH$ then $\beta_{\cA} \geq t-1/2$. Now consider the well-studied case when $\cH$ contains just $K_{t+1}$ (the only $t$-connected graph on $t+1$ vertices). For $3 \leq t \leq 8$, it was shown by Dirac \cite{DiracK4} ($t=3$), Wagner \cite{wagnerK5} ($t=4$), Mader \cite{maderComplete} ($t=5,6$), J{\o}rgensen \cite{jorgensenK8} ($t=7$), and Song and Thomas \cite{songThomasK9} ($t=8$), that the largest $K_{t+1}$-free graphs on $n$ vertices contain $(t-1)n-\binom{t}{2} + \gamma_t$ edges, where $\gamma_t = 0$ for $t \leq 6$ and $\gamma_t = 1$ for $t = 7,8$. Thus $\beta_{\Ex(K_{t+1})} = t-1$ for $t=3,\ldots,8$. Hence, for $1 \leq t \leq 8$, we see that if $K_{t+1} \in \cH$ then $\beta_{\cA} = t-1$, while if $K_{t+1} \notin \cH$ then $\beta_{\cA} \geq t-\tfrac12$. Some examples when the lower bound on $\beta_{\cA}$ in Proposition~\ref{prop:lbnew} is sharp include:
\begin{enumerate}
 \item $\Ex(C_4)$ and $\Ex(D)$, where $D$ is the diamond $D=K_4-e$ (note that $\Ex(C_4) \subseteq \Ex(D)$). In both cases, the lower bound on $\beta$ from Proposition~\ref{prop:lbnew} is $(h+t-3)/2=(4+2-3)/2=3/2$. Both bounds are tight; for the graphs in $\Ex(D)$ are those in which each block is an edge or a cycle, so $\beta_{\Ex(C_4)}= \beta_{\Ex(D)}=3/2$.
 \item A graph $G$ has no $K_{2,3}$ minor if and only if every block of $G$ is outerplanar or a $K_4$ (see, e.g., Theorem 4.3 in Seymour \cite{HandbookOfComb}). Consequently, we have $\beta_{\Ex(K_{2,3})}=2$, and the lower bound from Proposition~\ref{prop:lbnew} is indeed $(h+t-3)/2=(5+2-3)/2=2$.
 \item Let $W_5$ be the wheel graph on $5$ vertices (the cycle $C_4$ with a universal vertex). Then $W_5$ is $3$-connected and $\beta_{\Ex(W_5)}=5/2$ (see Theorem 2.1 and Lemma 2.2 in Chleb\'ikov\'a \cite{chlebikovaW5}). Also, the lower bound on $\beta_{\Ex(W_5)}$ from Proposition~\ref{prop:lbnew} is $(h+t-3)/2=(5+3-3)/2=5/2$. Indeed, noting also Proposition~\ref{prop.beta3} (b), we see that the least two values in $B_3$ are 2 and 5/2.
\end{enumerate}

However, recall that we have $\beta_{\Ex(K_t)} \approx \alpha \,  t \sqrt{\log t}$ as $t \to \infty$, where $\alpha \approx 0.319$, as shown by Thomason~\cite{completeMinors}; thus, when $\cA$ is $\Ex(K_t)$ with $t$ large, the lower bound on $\beta$ in Proposition~\ref{prop:lbnew} is far from tight.

The following question asks about a natural extension of the results in Theorem~\ref{thm:addableDensities} and Proposition~\ref{prop:1+B_2inB3}, where we show that the inclusions $1+B_1 \subseteq B_{2}$ and $1+B_2 \subseteq B_{3}$ hold.
\begin{question}
\label{que:1+B_tInB_t+1}
 Do we have $1+B_t \subseteq B_{t+1}$ for all $t \geq 1$?
\end{question}
This does not appear to be an easy question, and in particular the proof of Proposition~\ref{prop:1+B_2inB3} does not seem to be directly adaptable for higher values of $t$: the problem is the lack of the `only if' part in Proposition~\ref{prop:kSums} for $t \geq 3$.

We list some further questions about the possible relations between the sets $B_t$ is Section~\ref{sec:openProblems}. To make the first step in the direction of answering Question \ref{que:1+B_tInB_t+1}, one could start by addressing the following problem.
\begin{question}
 Does $\min B_t = t-1$ hold for all $t \geq 1$?
\end{question}
Clearly, by Proposition~\ref{prop:lbnew}, any family $\cH$ of $t$-connected minors giving $\beta_{\Ex(\cH)} = t-1$ must contain $K_{t+1}$: on the other hand, by Thomason's result, if $t$ is large enough then $\cH$ must also contain some other excluded minor.

Let $\cH_t$ be the family of all $t$-connected graphs. Under the relation of taking graph minors, by the Robertson-Seymour theorem, $\cH_t$ has a finite set $\cH_t^*$ of minimal graphs. (It is immediate that $\cH_t^* = \{K_{t+1}\}$ for $t =1,2$; Tutte \cite{3connectedTutte} showed that $\cH_3^* = \{K_{4}\}$, and Halin and Jung \cite{halinJung} proved that $\cH_4^* = \{K_{5}, K_{2,2,2}\}$; for $t=5$ Fijav\v{z} \cite{fijavzThesis} conjectured that $\cH_5^*$ is a specific set of six graphs, see also Theorem 6.1 and Conjecture G in Kriesell \cite{structuralTopics}.) If $\cA_t = \Ex(\cH_t^*)$, then clearly $\min B_t = \beta_{\cA_t}$. We have already seen that always $\min B_t \geq t-1$, and that $\min B_t = t-1$ for each $t \leq 8$.
\begin{prop}
\label{prop:minBt}
 For each $t \geq 9$ we have $\, \min B_t \in [t-1,2t-2]$.
\end{prop}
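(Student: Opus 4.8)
The plan is to handle the two bounds separately. The lower bound $\min B_t \ge t-1$ is already available: it is exactly Proposition~\ref{prop:lbnew} applied with $h = t+1$ (every $t$-connected graph has at least $t+1$ vertices), which gives $\beta_{\cA} \ge (h+t-3)/2 = t-1$ for every minor-closed class all of whose excluded minors are $t$-connected. So all the work lies in the upper bound $\min B_t \le 2t-2$.

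For the upper bound I would work with the class $\cA_t = \Ex(\cH_t^*)$ introduced just above the statement, using the two facts noted there: $\cA_t$ is precisely the class of graphs having \emph{no} $t$-connected minor, and $\min B_t = \beta_{\cA_t}$. (The latter holds because if $\cA = \Ex(\cH)$ with every member of $\cH$ being $t$-connected, then each $H \in \cH$ has a minor in $\cH_t^*$, so $\cA_t \subseteq \cA$ and hence $\beta_{\cA_t} \le \beta_{\cA}$; one also checks that $\cA_t$ is proper -- it contains all forests and excludes $K_{t+1}$ -- and that each of its excluded minors, being a member of $\cH_t^*$, is $t$-connected, so that $\beta_{\cA_t}$ really is an element of $B_t$.) It thus suffices to prove that $\rho(G) \le 2t-2$ for every graph $G$ with no $t$-connected minor.

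The crux is the observation that a graph with no $t$-connected minor has, \emph{a fortiori}, no $t$-connected subgraph, since every subgraph is a minor. I would then invoke Mader's theorem that every graph of average degree at least $4(t-1)$ contains a $t$-connected subgraph: it follows that any $G \in \cA_t$ has average degree strictly less than $4(t-1)$, that is $\rho(G) = e(G)/v(G) < 2(t-1) = 2t-2$. Taking the supremum over $G \in \cA_t$ gives $\beta_{\cA_t} \le 2t-2$, and therefore $\min B_t \le 2t-2$, as required. (With the sharper form of Mader's bound -- a graph on $n \ge 2t-1$ vertices with more than $(2t-3)(n-t+1)$ edges has a $t$-connected subgraph -- one even gets $\min B_t \le 2t-3$; and note that the hypothesis $t \ge 9$ plays no role in the argument, being stated only because $\min B_t = t-1$ is already known exactly for $t \le 8$.)

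I do not expect a serious obstacle here: the proposition is essentially the combination of Proposition~\ref{prop:lbnew}, the identity $\min B_t = \beta_{\cA_t}$, and Mader's subgraph-connectivity theorem. The only points requiring a moment's care are getting the direction of the implication right (``no $t$-connected minor'' $\Rightarrow$ ``no $t$-connected subgraph'', the trivial direction, but precisely the one that makes Mader's theorem -- which is about subgraphs rather than minors -- applicable), and choosing the form of Mader's bound so that the resulting density bound is at most $2t-2$.
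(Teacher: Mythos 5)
Your proof is correct and follows essentially the same route as the paper: the lower bound comes from Proposition~\ref{prop:lbnew} with $h=t+1$, and the upper bound from Mader's theorem (Theorem 1.4.3 in Diestel) applied to $\cA_t=\Ex(\cH_t^*)$, via the observation that ``no $t$-connected minor'' implies ``no $t$-connected subgraph.'' Your added justification that $\min B_t=\beta_{\cA_t}$ and the side remark on the sharper form of Mader's bound are correct elaborations but do not change the underlying argument.
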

\begin{proof}
We must prove the upper bound $\min B_t \leq 2t-2$. Let $G$ be a graph with $\rho(G) \geq 2t-2$; that is, with average degree at least $4t-4$. Mader proved (see Theorem 1.4.3 in Diestel~\cite{Diestel}) that every such graph $G$ has a $t$-connected subgraph, and thus has a $t$-connected minor. Hence $\rho(G)<2t-2$ for each graph $G \in \cA_t$, so $\beta_{\cA_t} \leq 2t-2$, and the upper bound follows.
\end{proof}

It seems unlikely that the upper bound in Proposition \ref{prop:minBt} is anywhere near sharp. Mader conjectured, that $\rho(G) \geq 3t/2$ is always enough to force the existence of a $t$-connected subgraph. Moreover, the conjectured extremal examples are formed by a union of many cliques $K_{2t-2}$, all sharing a common set of $t-1$ vertices, from which all edges are removed. Hence, even though these graphs contains no $t$-connected subgraphs, they trivially have $t$-connected minors.

\section{Further observations about the structure of $B$}
\label{sec:eppsteinsQuestion}

In this section we begin the study of the structure of the set $B$ above the value $2$. We give a construction and a proposition which allow us to resolve some further questions asked in~\cite{densitiesMinorClosed} by Eppstein.

The first of these questions which we address here is: if $\beta \in B'$, must we have $\beta-1 \in B$? This was repeated as Question 8.7 in the survey article~\cite{norin2015}.  The question is equivalent to asking if $B' \subseteq 1+B$, and a positive answer would have implied that all values in $B$ are rational. (This implication was noted in~\cite{densitiesMinorClosed}. To see why it holds, observe first that any irrational value in $B$ must be in $B'$, since $B \backslash B' \subseteq A$ by Theorem~\ref{thm:structureOfB}.  Thus we could use $B' \subseteq 1+B$ to show that if $\beta$ is an irrational value in $B$ then so is $\beta-1$; and repeating this step we would find an irrational value in $B \cap [0,1)$  -- but by~\eqref{eqn:[0,1)} there are no such values.) Recall that Theorem~\ref{thm:addableDensities} states that $ 1+ B \subsetneq B_2 \subseteq B'$; and so far we have proved that $1+ B \subseteq B_2 \subseteq B'$, and it remains only to prove that $1+B \neq B_2$. In this section we complete the proof of Theorem~\ref{thm:addableDensities}, and thus show that the answer to Eppstein's question is negative.  We do this by proving that $\tfrac{25}{11} \in B_2$, while by Remark~\ref{rem:lowestAbove1} we know that $\tfrac{25}{11} -1  = \tfrac{14}{11} \notin B$, as we have $\tfrac{14}{11} \in (\tfrac{5}{4},\tfrac{9}{7})$.

To formulate Eppstein's other question, we need the following definition. We say that $\beta \in B$ is an order-$1$ cluster point if $\beta \in B'$, and that $\beta$ is an order-$i$ cluster point if $\beta$ is a cluster point of order-$(i-1)$ cluster points in $B$. In \cite{densitiesMinorClosed} it was observed that the order type of $B$ is at least $\omega^\omega$ since $B$ contains cluster points of all orders. Eppstein asked whether $i$ is the smallest order-$i$ cluster point for all~$i$, which would imply that the order type of $B$ is indeed $\omega^\omega$. Again, in what follows we prove that the answer to this question is negative; the construction we present in this section shows that $i-1/2$ as an order-$i$ cluster point for all $i \geq 3$. This does not rule out the possibility that the order type of $B$ is $\omega^\omega$. However, even though $5/2$ might well be the smallest order-$3$ cluster point, we suspect that $i-1/2$ is not always the smallest order-$i$ cluster point for all $i \geq 3$. Thus it seems that $B$ is not as `well-behaved' as one might hope, and that further claims about its structure may not follow by any `easy means'.
\smallskip

For the construction, let $k \geq 2$ and let $P_{k}$ be the path on $k$ vertices and $k-1$ edges. Let $P_k^+$ be the complete one-vertex extension of $P_k$ (known also as the fan graph). We have $v(P_k^+) = k+1$ and $e(P_k^+) = 2k-1$. For every edge of $P_{k}$ we take $t \geq 1$ disjoint copies of $K_4$ and identify one edge in every copy with the chosen edge of $P_{k}$; we do the same for the two edges connecting the universal vertex to the endpoints of the path. Let us denote the resulting graph by $H_{k,t}$ (the graph $H_{3,2}$ is shown in Figure \ref{fig:fanWithCliques}). We have
\[
 v(H_{k,t}) = k+1+(k+1)2t = (k+1)(2t+1) 
\]
and
\[
 e(H_{k,t}) = 2k-1+5t(k+1)=(k+1)(5t+2)-3.
\]
Also, let $H_{0,t} = K_4$ for all $t \geq 1$ and let $H_{1,t}$ be the union of $2t$ copies of $K_4$ all sharing one common edge. Observe that the formulae for $v(H_{k,t})$ and $e(H_{k,t})$ hold also for $k=1$. 

\begin{figure}[htb] \centering
  \begin{tikzpicture}[scale=\examplescale]
    \tikzstyle{vertex}=[draw,shape=circle,minimum size=5pt,inner sep=0pt]
    
    \node[vertex] (P-1) at (0,0) {~};
    \node[vertex] (P-2) at (-0.5,1) {~};
    \node[vertex] (P-3) at (-1,1) {~};
    \node[vertex] (P-4) at (-1.5,1) {~};
    \node[vertex] (P-5) at (-2,1) {~};
    \node[vertex] (Q-1) at (2,0) {~};
    \node[vertex] (Q-2) at (1,-0.5) {~};
    \node[vertex] (Q-3) at (1,-1) {~};
    \node[vertex] (Q-4) at (1,-1.5) {~};
    \node[vertex] (Q-5) at (1,-2) {~};
    \node[vertex] (R-1) at (2,2) {~};
    \node[vertex] (R-2) at (2.5,1) {~};
    \node[vertex] (R-3) at (3,1) {~};
    \node[vertex] (R-4) at (3.5,1) {~};
    \node[vertex] (R-5) at (4,1) {~};
    \node[vertex] (S-1) at (0,2) {~};
    \node[vertex] (S-2) at (1,2.5) {~};
    \node[vertex] (S-3) at (1,3) {~};
    \node[vertex] (S-4) at (1,3.5) {~};
    \node[vertex] (S-5) at (1,4) {~};
    \foreach \x/\y in {1/2,2/3,1/3} {
      \draw [color=black] (P-\x) -- (P-\y);
      \draw [color=black] (Q-\x) -- (Q-\y);
      \draw [color=black] (R-\x) -- (R-\y);
      \draw [color=black] (S-\x) -- (S-\y);
    }
    \foreach \x/\y in {1/4,4/5,1/5} {
      \draw [color=black] (P-\x) -- (P-\y);
      \draw [color=black] (Q-\x) -- (Q-\y);
      \draw [color=black] (R-\x) -- (R-\y);
      \draw [color=black] (S-\x) -- (S-\y);
    }
    \foreach \x in {2,3} {
      \draw [color=black] (P-1) -- (Q-\x);
      \draw [color=black] (Q-1) -- (R-\x);
      \draw [color=black] (R-1) -- (S-\x);
      \draw [color=black] (S-1) -- (P-\x);
    }
    \foreach \x in {4,5} {
      \draw [color=black] (P-1) -- (Q-\x);
      \draw [color=black] (Q-1) -- (R-\x);
      \draw [color=black] (R-1) -- (S-\x);
      \draw [color=black] (S-1) -- (P-\x);
    }
    \draw [color=black, dashed, thick] (P-1) -- (Q-1);
    \draw [color=black, dashed, thick] (Q-1) -- (R-1);
    \draw [color=black, dashed, thick] (R-1) -- (S-1);
    \draw [color=black, dashed, thick] (S-1) -- (P-1);
    \draw [color=black, dashed, thick] (P-1) -- (R-1);
     
  \end{tikzpicture}
  \caption{The graph $H_{3,2}$ obtained by placing two copies of $K_4$ on each of the `external' edges of the fan $P_3^+$; the edges of $P_3^+$ are dashed.}
  \label{fig:fanWithCliques}
\end{figure}
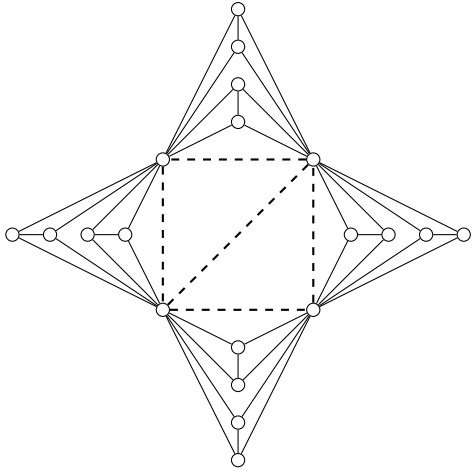

For all $k \geq 1$, the $1$-density of $H_{k,t}$ satisfies
\begin{equation}
\begin{split}
\label{eq:fanK4densities}
\rho_1(H_{k,t}) & = \frac{(5t+2)(k+1)-3}{(k+1)(2t+1)-1} \\
   & = \frac{(5t+2)(k+1- \frac1{2t+1}) -\frac{t+1}{2t+1}}{(k+1)(2t+1)-1} \\
   & = \frac{5t+2}{2t+1} - \frac{t+1}{(2t+1)((k+1)(2t+1)-1)} < \frac{5t+2}{2t+1} < \frac{5}{2}.
\end{split}
\end{equation} 
Observe also that for all $t \geq 1$ we have $\rho_1(H_{0,t}) = \rho_1(K_4) = 2 < \rho_1(H_{k,t})$ for all $k \geq 1$.

\begin{prop}
\label{prop:eppsteinsQuestion}
Let $H$ be a proper minor of $H_{k,t}$ for some $k \geq 0$ and $t \geq 1$. Then $\rho_1(H) < \rho_1(H_{k,t})$.
\end{prop}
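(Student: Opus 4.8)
The plan is to show that $H := H_{k,t}$ is strictly $1$-minor-balanced by an edge-counting argument built on its ``fan plus $K_4$-gadgets'' structure. First, some reductions. If a proper minor is disconnected, each component is a connected proper minor, and $\rho_1$ of the whole is strictly below the maximum of $\rho_1$ over the components (the $-1$ in the denominator of $\rho_1$ is not distributed over components), so it suffices to treat connected proper minors. If a connected proper minor $M$ has $v(M)=v(H)$ then $M$ is a proper spanning subgraph, so $e(M)<e(H)$ and $\rho_1(M)<\rho_1(H)$. So assume $M$ is connected with $r:=v(H)-v(M)\ge 1$; adding edges back only increases $e(M)$ and keeps $M$ a minor of $H$, so we may assume $M$ is the densest minor for a branch decomposition of $H$: a deleted set $D$ together with connected branch sets $B_1,\dots,B_{n'}$, with $B_iB_j\in E(M)$ iff $H$ has an edge joining $B_i$ and $B_j$. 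Put $s=\sum_i(|B_i|-1)$ and $d=|D|$, so $r=s+d$. Counting how each edge of $H$ is used, $e(H)-e(M)=I+P+\Delta$, where $I=\sum_i e(B_i)$ is the number of edges inside branch sets, $\Delta$ is the number of edges meeting $D$, and $P$ is the number of edges lost to parallel collapses between branch sets. A short cross-multiplication shows that $\rho_1(M)<\rho_1(H)$ is equivalent to
\[
  I+P+\Delta \;>\; \rho_1(H)\cdot r .
\]

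Next, the easy ``sparse'' case. By~\eqref{eq:fanK4densities}, $\rho_1(H)<\tfrac52$; also $\rho_1(H)>2$ unless $t(k+1)=1$, i.e.\ unless $H=H_{0,1}=K_4$ (and proper minors of $K_4$ trivially have $\rho_1\le\tfrac32<2$), so assume $t(k+1)\ge2$. If $e(M)\le 2(v(M)-1)$ then $\rho_1(M)\le2<\rho_1(H)$ and we are done; so from now on $e(M)\ge 2v(M)-1$. Use the decomposition of $H$ into its spanning fan $P_k^+$ and the $t(k+1)$ copies of $K_4$, grouped in bunches of $t$ along the $k+1$ rim edges of the fan. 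Call $M$ \emph{fan-respecting} if no fan vertex lies in $D$ and every fan vertex is a singleton branch set. Then $M$ is the fan with each gadget either deleted, truncated/contracted to a triangle on its rim edge (contributing $1$ extra vertex and $2$ extra edges), or kept as a $K_4$ (contributing $2$ and $5$); writing $n_1,n_2$ for the numbers of gadgets of the latter two kinds, with $2n_2+n_1\le 2t(k+1)-1$ because $M$ is proper, a direct computation reduces $\rho_1(M)<\rho_1(H)$ to
\[
  (k+2)\bigl(t(k+1)-n_2\bigr)+n_1\bigl(t(k+1)-1\bigr)\;>\;0,
\]
which holds since $n_2\le t(k+1)$ and $t(k+1)\ge2$.

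Finally, the hard case: $M$ dense and not fan-respecting, so some rim vertex of the fan is deleted or merged into a branch set of size $\ge2$. The heuristic is that such moves are expensive --- a deleted rim vertex has degree at least $2+4t$, and contracting a rim edge forces a parallel collapse at each of the $t$ gadgets sitting on it --- and that once the rim cycle is broken the remaining graph is depleted enough that finishing the bound is easy. The main obstacle is to make this rigorous: a naive ``loss rate at least $\tfrac52$ per removed vertex'' is \emph{false} --- for instance, deleting all $4t$ gadget vertices on two incident rim edges and then contracting the now-degree-$2$ rim vertex between them gives $e(H)-e(M)=10t+2$ against $r=4t+1$, a rate strictly below $\tfrac52$ --- so the argument must use the exact value of $\rho_1(H)$, not merely $\rho_1(H)<\tfrac52$. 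The route I would take is to prove, by induction on the number of vertices, the broader statement that every proper minor of a graph obtained from a fan $P_{k'}^+$ by gluing arbitrary numbers of $K_4$'s and of triangles along rim edges has $\rho_1$ at most the maximum of $2$ and its own $\rho_1$, with strict inequality when that value exceeds $2$; this class is closed under deleting or contracting a rim vertex (which is exactly what produces the pendant triangles and smaller fans above), the fan-respecting computation serves as the base of each inductive step, and the cost of a rim operation supplies the strict decrease needed to complete the induction.
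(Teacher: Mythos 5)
Your set-up is sound: the cross-multiplied criterion $I+P+\Delta > \rho_1(H_{k,t})\cdot r$, the reduction to connected minors, and the fan-respecting computation are all correct. But the proof stops before the essential part. For the non-fan-respecting case you diagnose the difficulty, propose an induction on vertex-count over a broader class of ``fans with $K_4$'s and triangles glued along rim edges'', and assert that ``the cost of a rim operation supplies the strict decrease needed to complete the induction'' --- without carrying any of this out. Since non-fan-respecting minors are precisely the hard ones (your own counterexample shows a flat $\tfrac52$ loss-rate bound fails there), this is a genuine gap, not a polish issue: the proposition is not actually established by the argument as written.

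The paper's proof closes this case by a different and much shorter induction: on $k$, not on the vertex count, after reducing to $2$-connected minors obtained by edge contractions only. If some contracted edge lies in the fan $P_k^+$, one checks that contracting a rim or spoke edge of $P_k^+$ turns $H_{k,t}$ into $H_{k-1,t}$ together with pendant triangles (or, for an interior spoke, into two smaller $H_{j,t}$-blocks joined at a cut vertex), so every $2$-connected minor is already a minor of $H_{k-1,t}$; since by \eqref{eq:fanK4densities} the values $\rho_1(H_{k,t})$ are strictly increasing in $k$, the inductive hypothesis finishes. If no fan edge is contracted, all operations are confined inside the $K_4$ gadgets, where the loss rate genuinely is at least $\tfrac52 > \rho_1(H_{k,t})$ per removed vertex. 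Note that your counterexample contracts the fan edge $u_1u_2$, so in the paper's scheme it falls into the inductive branch, where no loss-rate bound is applied at all; it is not evidence that ``the argument must use the exact value of $\rho_1(H_{k,t})$'' --- the paper instead exploits the monotonicity of $\rho_1(H_{k,t})$ in $k$. If you wish to salvage your route, the missing ingredient is exactly this structural observation about what a single fan-edge contraction produces.
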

\begin{proof}
We prove the Proposition by induction on $k$. Since every graph $H_{k,t}$ is connected, it is enough to show the inequality for all 2-connected minors $H$ obtained from $H_{k,t}$ by a series of edge-contractions.

The proposition is immediate for $k=0$, as the only such $2$-connected minor of $K_4$ is a triangle with $\rho_1(K_3) = 3/2$.

Hence, let $k \geq 1$ and let $H$ be a $2$-connected minor of $H_{k,t}$ obtained by a series of edge-contractions. If any of the contracted edges is an edge of the original fan $P_k^+$ then $H$ is also a minor of $H_{k-1,t}$, so by induction and \eqref{eq:fanK4densities} we have
\[
\rho_1(H) \leq \rho_1(H_{k-1,t}) < \rho_1(H_{k,t}).
\]

Otherwise, if $H$ is obtained without contracting any of the edges of the original $P_k^+$, i.e., if all contractions are inside the copies of $K_4$ placed on the edges of $P_k^+$, then we immediately see that the ratio of the number of edges to the number of vertices that are removed is at least $5/2$. Since $\rho_1(H_{k,t}) < \frac{5t+2}{2t+1} < \tfrac{5}{2}$, this again gives $\rho_1(H) < \rho_1(H_{k,t})$ and completes the proof of the Proposition.
\end{proof}

Let us now return to Eppstein's questions. For the first one, observe that since $\rho_1(H_{3,1}) = \tfrac{25}{11}$, by Proposition \ref{prop:eppsteinsQuestion} the addable class of graphs with every block being a minor of $H_{3,1}$ has critical density $\tfrac{25}{11}$.  Thus $\tfrac{25}{11} \in B_2$, and $B_2 \subseteq B'$ by Theorem \ref{thm:addableDensities}, so we have answered question 1 as planned.

Moreover, let us demonstrate that $\tfrac{25}{11}$ belongs to the set $A$ of achievable densities. First, take the cycle $C_{10}$ on $\{1,\ldots,10\}$ and for $i$ odd, connect the vertices $i, i+2$ by an edge (including the edge $\{9,1\}$). The resulting graph $G$ on $10$ vertices and $15$ edges is minor-balanced. Now, add a universal vertex, to form $G^+$ consisting of $11$ vertices and $25$ edges. By Lemma \ref{lem:G+strictly(t+1)Balanced} $G^+$ is $1$-minor-balanced, and hence by case~(\ref{case:1minbal3}) of Lemma~\ref{lem:1minbal}, it is (strictly) minor-balanced, so $\tfrac{25}{11} \in A$.

Moving to the question about the value of the smallest order-$i$ cluster point, observe first that for fixed $t$, the values of $\rho_1(H_{k,t})$ in \eqref{eq:fanK4densities} converge to $\tfrac{5t+2}{2t+1}$ as $k \to \infty$. Since all the individual values of $\rho_1(H_{k,t})$ are in $B'$, this makes $\tfrac{5t+2}{2t+1}$ an order-$2$ cluster point. Clearly, these values converge to $5/2$ as $t \to \infty$, making $5/2$ an order-$3$ cluster point. The statement about cluster points of higher orders then follows from the fact that $1+B \subseteq B'$ (in Theorem \ref{thm:addableDensities}).

\section{Concluding remarks and open problems}
\label{sec:openProblems}

We first discuss briefly the one remaining question from those listed as open by Eppstein~\cite{densitiesMinorClosed} (the others have already been discussed), and then propose four new open questions. The question from Eppstein is whether each $\beta \in B$ is achievable (that is, if $B=A$) -- which of course would imply that each $\beta$ is rational. This question remains open and interesting. However, it is easy to see that if we allow infinite graphs in a natural way, then each $\beta \in B$ is achievable (though this does not for example tell us that $\beta$ must be rational). To discuss this, we need some definitions. For an infinite graph $G$ define its density $\rho(G)$ to be the supremum of the densities of its finite subgraphs.  Define $\beta_\cA$ for an arbitrary class $\cA$ of graphs as the supremum of the densities of the graphs in~$\cA$.  Given a class $\cA$ of finite graphs, let $\cA^+$ be the class of countable graphs such that each finite subgraph is in $\cA$ and each component is finite.  If $\cA$ is decomposable, then the graphs in $\cA^+$ are exactly the countable disjoint unions of (connected) graphs in $\cA$.

Now let us see that each $\beta \in B$ is achievable when we allow infinite graphs. Recall that $B=B_1$, so there is a decomposable minor-closed class $\cA$ of graphs such that $\beta=\beta_{\cA}$.  Let $G_1,G_2,\ldots$ be a sequence of disjoint connected graphs in $\cA$ such that $\rho(G_k) \to \beta_\cA$ as $k \to \infty$.  Let $G$ be the countable graph with components the graphs $G_k$.  Then $G \in \cA^+$ and $\rho(G)=\beta$.
\medskip

Now we propose four new open questions, concerning densities just above 2, the limit points in~$B$, and how quickly the gaps between consecutive critical densities vanish. It is not currently clear to us what the structure of the set $B$ above $2$ might be. In Remark~\ref{rem:lowestAbove1} we listed the first few values in $B$ above $1$: the natural first question to ask here is the following.
\begin{question}
\label{que:lowestAbove1}
 What are the minimum values larger than $2$ in $B$ and in $B_2$? 
\end{question}
(Recall that these minimum values exist by Theorem \ref{thm:structureOfB}.) We conjecture that the answer to Question \ref{que:lowestAbove1} is
\[
 \min (B \cap (2,\infty)) = \tfrac{33}{16}, \mbox{ and } \min (B_2 \cap (2,\infty)) = \tfrac{11}{5}.
\]
In the following proposition we show that these values are indeed upper bounds on the minima. (For $\min (B \cap (2,\infty)) \leq \tfrac{33}{16}$ consider the case $k=3$ in the first statement of the proposition.)
\begin{prop}
 \label{prop:lowestAbove2}
 For all $k \geq 1$ any graph obtained from $k$ copies of the graph $G_1$ in Figure \ref{fig:lowestAbove2} by identifying vertex $v$ in each copy into one vertex is strictly minor-balanced. Consequently, the following two statements hold.
 \begin{enumerate}
  \item For all $k \geq 1$ we have $2+\tfrac{k-2}{5k+1} \in B$.
  \item $\tfrac{11}{5} \in B_2$.
 \end{enumerate}
\end{prop}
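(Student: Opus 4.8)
The plan is to recognise the graph $G_1$ of Figure~\ref{fig:lowestAbove2} as the complete one-vertex extension $G^+$ of the \emph{butterfly} $G$ --- the graph consisting of two triangles sharing a single vertex --- with $v$ playing the role of the apex (universal) vertex. (One can see that $G$ is the unique $5$-vertex minor-balanced graph of density $\tfrac65$: such a graph can have no diamond minor, since $\rho(K_4-e)=\tfrac54>\tfrac65$, so all its blocks are edges or cycles, and the only connected such graph on $5$ vertices with $6$ edges is the butterfly.) Identifying the apex vertices of $k$ copies of $G_1$ then produces exactly $(kG)^+$, the complete one-vertex extension of the disjoint union $kG$ of $k$ copies of $G$; this graph has $5k+1$ vertices and $6k+5k=11k$ edges, and hence density $\tfrac{11k}{5k+1}=2+\tfrac{k-2}{5k+1}$.

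To prove the main assertion (that $(kG)^+$ is strictly minor-balanced), I would first check that $kG$ is $0$-minor-balanced: it has positive density $\tfrac65$, and every minor of $kG$ is a disjoint union of minors of copies of $G$ (together with possibly some isolated vertices), so each of its components has density at most $\rho(G)=\tfrac65$, whence the whole minor has density at most $\tfrac65=\rho(kG)$. Then Lemma~\ref{lem:G+strictly(t+1)Balanced} with $t=0$ shows that $(kG)^+$ is $1$-minor-balanced, and Lemma~\ref{lem:1minbal}\,(\ref{case:1minbal3}) shows that $(kG)^+$ is therefore strictly minor-balanced.

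Statement~1 is then immediate: $(kG)^+$ is a minor-balanced graph of density $2+\tfrac{k-2}{5k+1}$, so this value lies in $A$, and $A\subseteq B$ by Theorem~\ref{thm:structureOfB}. For statement~2, let $\cG$ be the class of graphs in which every block is a minor of $G_1$; as in the proof of Theorem~\ref{thm:addableDensities} this is a minor-closed addable class, so by Proposition~\ref{prop.beta-add}\,(\ref{case:prop.beta-add1}) we have $\beta_\cG=\sup\{\rho_1(H):H\in\cG\}$. Since $G$ is $0$-minor-balanced, Lemma~\ref{lem:G+strictly(t+1)Balanced} (with $t=0$) gives that $G_1=G^+$ is $1$-minor-balanced, and Lemma~\ref{lem:tDensitiesRelation} gives $\rho_1(G_1)=\rho_0(G)+1=\tfrac65+1=\tfrac{11}{5}$. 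By~\eqref{eqn.rho1}, every $H\in\cG$ satisfies $\rho_1(H)\le\max_B\rho_1(B)$ over the blocks $B$ of $H$, each of which is a minor of $G_1$ and so has $\rho_1(B)\le\rho_1(G_1)=\tfrac{11}{5}$; moreover the value $\tfrac{11}{5}$ is attained at $H=G_1$. Hence $\beta_\cG=\tfrac{11}{5}$, so $\tfrac{11}{5}\in B_2$.

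The only genuine obstacle is the structural identification in the first paragraph --- confirming exactly which $6$-vertex, $11$-edge graph $G_1$ is and that $v$ is its apex vertex --- after which the proposition follows entirely from machinery already in place (Lemmas~\ref{lem:tDensitiesRelation}, \ref{lem:G+strictly(t+1)Balanced}, \ref{lem:1minbal} and Proposition~\ref{prop.beta-add}). Should $v$ turn out not to be the apex vertex, the clean ``$1$-minor-balanced $\Rightarrow$ strictly minor-balanced'' route would need to be replaced by a direct exploration-and-edge-counting argument in the spirit of the proof of Lemma~\ref{lem:k2treesEverywhere}: contracting a whole copy of $G_1$ removes $5$ vertices and $11$ edges (ratio $\tfrac{11}{5}>\tfrac{11k}{5k+1}$), whereas any contraction that uses less than a whole copy removes strictly more than $\tfrac{11k}{5k+1}$ edges per vertex, forcing every proper minor to be sparser.
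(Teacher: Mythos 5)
Your proposal is correct and follows essentially the same route as the paper: recognise $G_1$ as the complete one-vertex extension $G^+$ of the bowtie $G$ with $v$ the apex, observe that $H_k=(kG)^+$ and that $kG$ is minor-balanced, and then invoke Lemma~\ref{lem:G+strictly(t+1)Balanced} followed by Lemma~\ref{lem:1minbal}(\ref{case:1minbal3}) to conclude strict minor-balance, with the two numbered claims falling out as you describe. The only differences are cosmetic: you supply a few small justifications (that $kG$ is minor-balanced, and the Proposition~\ref{prop.beta-add} step for $\tfrac{11}{5}\in B_2$) that the paper leaves implicit.
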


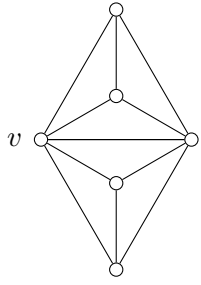
\begin{figure}[htb] \centering
  \begin{tikzpicture}[scale=\examplescale]
    \tikzstyle{vertex}=[draw,shape=circle,minimum size=5pt,inner sep=0pt]
    
    \node[vertex] (Q-1) at (2,0) {~};
    \node[vertex] (Q-2) at (1,-0.58) {~};
    \node[vertex] (Q-3) at (1,-1.73) {~};
    \node[vertex] (S-1) at (0,0) {~};
    \node[vertex] (S-2) at (1,0.58) {~};
    \node[vertex] (S-3) at (1,1.73) {~};
    \foreach \x/\y in {1/2,2/3,1/3} {
      \draw [color=black] (Q-\x) -- (Q-\y);
      \draw [color=black] (S-\x) -- (S-\y);
    }
    \foreach \x in {1,2,3} {
      \draw [color=black] (Q-1) -- (S-\x);
      \draw [color=black] (S-1) -- (Q-\x);
    }
    \draw[black] (-0.35,0) node {$v$};
     
  \end{tikzpicture}
  \caption{Graph $G_1$ on $6$ vertices and $11$ edges referred to in Proposition \ref{prop:lowestAbove2}.}
  \label{fig:lowestAbove2}
\end{figure}

\begin{proof}
 Let $H_k$, $k \geq 1$, be obtained from $k$ copies of the graph $G_1$ by identifying vertex $v$ in each copy into one vertex. Clearly we have
 \[
  \rho(H_k) = \frac{e(H_k)}{v(H_k)} = \frac{11k}{5k+1} = 2+\frac{k-2}{5k+1} < \frac{11}{5}.
 \]
 Observe that for all $k \geq 1$, graph $H_k$ is obtained from $k$ disjoint copies of a bowtie graph (two triangles sharing one common vertex) by adding a universal vertex. Since $k$ disjoint copies of a bowtie graph give a minor-balanced graph, by Lemma \ref{lem:G+strictly(t+1)Balanced} and by case~(\ref{case:1minbal3}) of Lemma~\ref{lem:1minbal} we have that $H_k$ is strictly minor-balanced.
 
 Therefore the families $\cA_k$ of graphs with every component being a minor of $H_k$ are minor-closed with $\beta_{\cA_k} = 2+\frac{k-2}{5k+1} \in B$. The addable family of graphs with every block being a minor of $G_1$ has critical density $11/5$. This completes the proof of the Proposition.
\end{proof}

By Theorem \ref{thm:addableDensities} we know that $1+B \subseteq B_2 \subseteq B'$, these three sets are equal on $[0,2]$, and $1+B \neq B_2$. But what about $B_2$ and $B'$?

\begin{question}
\label{que:B_2=B'}
 Do we have $B_2 = B'$?
\end{question}

Since $B=B_1$, we may rephrase the above as noting that $B_2 \subseteq B_1'$, and asking if $B_2 = B_1'$. Perhaps $B_{t+1} \subseteq B_t'$ for each $t \geq 1$, but we do not have equality for $t=2$.  Let us show that $7/3 \in B'_2 \setminus B_3$.

As we saw in Section \ref{sec:eppsteinsQuestion}, see~\eqref{eq:fanK4densities}, the addable class of graphs with every block being a minor of the graph $H_{k,1}$ has critical density $\frac{7}{3} - \frac{2}{3(3k+2)} \in B_2$. Since these values converge to $7/3$, we have $7/3 \in B_2'$. But the smallest two values in $B_3$ are $2$ and $5/2$ (see note 3 following Proposition~\ref{prop:lbnew}), so $7/3 \not\in B_3$. Hence $7/3 \in B'_2 \setminus B_3$, as desired.
\medskip

Next, let us come back to Theorem \ref{thm:asymptoticallyDense}, where we prove that $\delta_B(x) = O(x^{-2})$ as $x \to \infty$.
\begin{question}
 \label{que:delta}
 How fast does $\delta_B(x)$ approach $0$ as $x \to \infty$? Can we  improve on $O(x^{-2})$? Is the convergence polynomial in $x^{-1}$?
\end{question}
Finally, one could ask if there is a pattern to the values in $B$ that are the most difficult to ``beat'' by just a tiny margin. For example, are the integer densities followed by the largest gaps in $B$? More formally, given $n \in \NN$, let
\[
\delta^*_B(n) = \sup_{x \geq n} \, \delta_B(x).
\]
\begin{question}
 \label{que:deltaIntegers}
Clearly for all $n$ we have $\delta_B^*(n) \geq \delta_B(n)$, but is $\delta^*_B(n) = O(\delta_B(n))$? (See also the third question in Section 10 of \cite{densitiesMinorClosed}.)
\end{question}
\bigskip

\noindent
{\bf Acknowledgement} \hspace{.02in}
We would like to thank Rohan Kapadia for helpful guidance concerning  earlier results on critical densities. We also thank the two careful referees for their comments and corrections.

\bibliographystyle{abbrv}

\end{document}